\documentclass[review]{elsarticle}

\usepackage[abs]{overpic}
\usepackage{graphicx}
\usepackage{subfigure}
\usepackage{stmaryrd}
\usepackage{amsfonts, color}
\usepackage{amsmath}
\usepackage{epsfig}
\usepackage{epstopdf}
\usepackage{psfrag}
\usepackage{ntheorem}

\usepackage[a4paper, total={6in,8in}]{geometry}

\newcommand{\vect}[1]{{\bf #1}}

\newcommand{\bM}{\begin{displaymath}}
\newcommand{\eM}{\end{displaymath}}

\newcommand{\be}{\begin{equation}}
\newcommand{\ee}{\end{equation}}

\newtheorem{lemma}{Lemma}[section]
\newtheorem{theorem}{Theorem}[section]

\newtheorem{remark}{Remark}[section]

\newtheorem*{proof}{Proof}


\journal{Journal of \LaTeX\ Templates}

\bibliographystyle{elsarticle-num}

\graphicspath{{./Second-StokesInterfaxe-figs/}}

\begin{document}

\begin{frontmatter}

\title{Second order convergence of a modified MAC scheme for Stokes interface problems
\tnoteref{mytitlenote}
}
\tnotetext[mytitlenote]{Haixia Dong is partially supported by the National Natural Science Foundation of China (Grant No. 12001193, the Scientific Research Fund of Hunan Provincial Education Department (No.20B376), Changsha Municipal Natural Science Foundation (No. kq2014073). Wenjun Ying is partially supported by the Strategic Priority Research Program of Chinese Academy of Sciences (Grant No. XDA25010405), the National Natural Science Foundation of China (Grant No. DMS-11771290) and the Science Challenge Project of China (Grant No. TZ2016002). }

\author[mymainaddress]{Haixia Dong}
\author[mysecondaryaddress]{Zhongshu Zhao}
\author[mythirdaddress]{Shuwang Li}
\author[myfourthaddress]{Wenjun Ying\corref{mycorrespondingauthor}}
\cortext[mycorrespondingauthor]{Corresponding author}
\ead{wying@sjtu.edu.cn}
\author[myfifthaddress]{Jiwei Zhang}

\address[mymainaddress]{MOE-LCSM, School of Mathematics and Statistics, Hunan Normal University, Changsha, Hunan 410081, P. R. China}
\address[mysecondaryaddress]{School of Mathematical Sciences, and Institute of Natural Sciences, Shanghai Jiao Tong University, Minhang, Shanghai, 200240, P. R. China}
\address[mythirdaddress]{Department of Applied Mathematics, Illinois Institute of Technology, Rettaliata Engineering Center, Room 11B, 10 W. 32nd Street, Chicago, IL60616, USA} 
\address[myfourthaddress]{School of Mathematical Sciences, MOE-LSC, and Institute of Natural Sciences, Shanghai Jiao Tong University, Minhang,
	Shanghai, 200240, P. R. China}	
\address[myfifthaddress]	{School of Mathematics and Statistics, and Hubei Key Laboratory of Computational Science, Wuhan University, Wuhan 430072, China}

\begin{abstract}
Stokes flow equations have been implemented successfully in practice for simulating problems with moving interfaces. Though computational methods produce accurate  solutions and numerical convergence can be demonstrated using a resolution study, the rigorous convergence proofs are usually limited to particular reformulations and boundary conditions. In this paper, a rigorous error analysis of the marker and cell (MAC) scheme for Stokes interface problems with constant viscosity in the framework of the finite difference method is presented. Without reformulating the problem into elliptic PDEs, the main idea is to use a discrete Ladyzenskaja-Babuska-Brezzi (LBB) condition and construct auxiliary functions, which satisfy discretized Stokes equations and possess at least second order accuracy in the neighborhood of the moving interface. In particular, the method, for the first time, enables one to prove second order convergence of the velocity gradient in the discrete $\ell^2$-norm, in addition to the velocity and pressure fields.  Numerical experiments verify the desired properties of the methods and the expected order of accuracy for both two-dimensional and three-dimensional examples.
\end{abstract}

\begin{keyword}
Stokes interface problem \sep 
Finite difference method  \sep 
MAC scheme  \sep 
Discrete LBB condition \sep 
$\ell^2$-error analysis
\end{keyword}

\end{frontmatter}


\section{Introduction}
\setcounter{equation}{0} 
The incompressible Stokes interface problem arises from many
important applications of flows \cite{MR3788553, MR3504551}. For decades,
 numerical methods have been developed for the Stokes interface
problem using grid-based methods (cf. \cite{stenberg1989some,
hansbo2014cut,wang2013hybridizable,mori2008convergence, 
peskin2002immersed, leveque1994immersed, li2006immersed}
and the references therein).  The numerical challenge comes from the low
order of accuracy when computing relevant fields in the neighborhood of the
interface, e.g. first order accuracy in the maximum norm for the immersed
boundary method (IBM). Another numerical issue is the smoothness of the numerical
solution across the interface, either the field function or its gradient.

For past years, the finite difference MAC scheme introduced by Lebedev and Welch \cite{leb1964immersed} has been widely used for solving incompressible Stokes and Navier-Stokes problems \cite{girault1996finite,han1998new,rui2017stability,Li2018NS}. This approach places the the pressure $p$ at the cell center, and the $x-$component velocity $u^{(1)}$ and the $y-$component velocity $u^{(2)}$ at the midpoints of the vertical and horizontal edges of each cell, respectively.   Since Nicolaides and Wu \cite{nicolaides1992analysis,nicolaides1996analysis} first demonstrated the MAC scheme in the form of the finite volume method in 1992, much theoretical analysis has been carried out by interpreting the MAC scheme in different forms, e.g. mixed finite element method \cite{girault1996finite,han1998new}, local discontinuous Galerkin method \cite{kanschat2008divergence}, etc. In most cases, one has only first order accuracy for both velocity and pressure on uniform meshes. On the other hand,  assuming that the pressure has second order accuracy,  Li and Sun \cite{li2015superconvergence} presented stability and second order superconvergence for the MAC scheme of Stokes equations on nonuniform grids.  Later, Rui and Li \cite{rui2017stability} established a discrete LBB condition and gave a rigorous proof of the second order superconvergence for the velocity and pressure fields, some terms of the $H_1$ norm of the velocity on the nonuniform grids. Based on \cite{rui2017stability}, Rui and Li \cite{Li2018dependent,rui2020Darcy, Li2018NS} further extended stability and superconvergence of the MAC scheme for time-dependent Stokes, Stoke-Darcy, and Navier-Stokes problems.  

There exist second order Cartesian grid methods for the Stokes and Navier-Stokes interface problems \cite{leveque1997immersed,lee2003immersed,rutka2008staggered,li2007immersed}. A typical example is the Immersed Interface Methods (IIM), which was proposed to improve the accuracy of IBM. Li  and his collaborators  \cite{MR1860918,MR3623206,Li2015Analysis,MR3788553,Tong2020gradient} have done  a series of works on the proof of convergence for the elliptic and Stokes interface problems in the past decades. For example, Hu and Li \cite{Hu2018IIM} gave rigorous error analysis of the augmented IIM (AIIM) for Stokes interface problems, in which second order accuracy for both velocity and pressure are established under the assumption that an auxiliary, second order accurate, Neumann boundary condition for pressure is provided. 
 Considering an elliptic interface problem, Tong and Wang et al. \cite{Tong2020gradient} proposed a new strategy based on IIM to confirm the second order convergence for 1D problems theoretically and nearly second order convergence for 2D problems except for a factor of $|\log h|$ of the gradient numerically. The main idea of this method is that the gradient at both regular and irregular grid points (also on the interface) is computed using the interpolation from the solution at grid points obtained from IIM.
Specifically, by introducing augmented variables,  Tan et al. \cite{tan2008immersed, tan2009immersed,tan2011implementation} used IIM with the MAC scheme to solve two-phase incompressible Stokes equations, 
which numerically produce second order accuracy for velocity and nearly second-order accuracy for pressure. 
Later, a direct IIM approach based on the MAC scheme \cite{Chen2018A}  was proposed for 2D two-phase Stokes flow, 
 which has also demonstrated its success in capturing non-smooth velocity and pressures. This approach is easy to implement, and is computationally efficient.  
Recently, a sharp capturing method with MAC scheme \cite{wang2022simple} was presented for two-phase incompressible Navier-Stokes equations. This method is of first-order accuracy for velocity and pressure.
 However, relatively less work is done to strictly show the accuracy for both velocity and pressure computed by the MAC scheme.


The MAC scheme has the advantages of simplicity, effectiveness, and ability to use existing fast solvers. But the accuracy for the gradient of the velocity is also needed in many situations, the second order accuracy of the gradient for the MAC scheme is not rigorously proved until now.
The purpose of this paper is to establish and  analyze a second-order finite difference MAC scheme for the Stokes interface problem.  The main contributions include
\begin{itemize}
\item[1)] A modified finite difference MAC scheme is constructed. To resolve the jumps in the solution and its derivatives sharply,  Mayo's technique \cite{MR736332, MR1145178} is used to incorporate the jumps into the MAC scheme near the interface. It is noteworthy that the technique to compute the jump conditions and calculate the correction terms is essentially different from that in \cite{tan2009immersed}. The computation is accomplished along the direction of the Cartesian grid line. 
\item[2)] By establishing discrete auxiliary functions, which depend on the exact velocity or pressure and discretizing parameters $h$, second order accuracy between these functions and the approximate numerical solutions (velocity, pressure and the gradient of velocity) of the modified MAC scheme is achieved. The auxiliary functions satisfy the discrete equations and cancel lower order truncation errors near the interface and boundaries. As a result, the truncation errors are of second order accuracy at all grid points consisting of internal regular points, boundary regular points and irregular points. Though this idea has been used for initial boundary value problems \cite{MR0166942}, such as Navier-Stokes problems \cite{MR1163348, MR1220643}, this is the first time developed for the interface problem.  
\item[3)]  On account of the good approximability of auxiliary functions to numerical solutions, second order $\ell^2$-accuracy in the velocity, the pressure as well as the gradient of the velocity of the modified MAC scheme is rigorously proved. Note that the convergence analysis of the gradient is very challenging yet and very limited results are available along this line in addition to some results for elliptic interface problems \cite{beale2007accuracy,dong2016unfitted,Tong2020gradient}. To the best knowledge of us,  this is the first work to analyze second order convergence for the modified MAC scheme.  
\end{itemize}

Recall the major challenge comes from the fact that the truncation errors on the boundaries are the order of $\mathcal{O}(1)$ and only the first order near the interface. Unlike the three-Poisson-equation decomposition approach \cite{Hu2018IIM} that second order convergence of pressure and velocity has been shown under some assumptions for auxiliary Neumann boundary condition, second  order accuracy by means of a discrete LBB condition and using the above established auxiliary functions is achieved.  It is worth mentioning that the scheme and analysis are only given for two dimensional problems, but they can be extended to three dimensional problems. In fact, the numerical accuracy is verified using 3D examples.

The remainder of this paper is organized as follows. Section 2 introduces
the model problem and its variational formulation. Section 3 describes
the modified MAC scheme for the Stokes interface problem. Section 4 presents
error analysis for the numerical solutions. Section 5 shows numerical examples 
to validate the theoretical results. Section 6 gives some concluding remarks.

\section{The Model Problem}
\label{sec;notation}
Let $\Omega$ be a two dimensional rectangular domain, and $\Omega^+\subset\subset\Omega$ be
a simply connected domain with smooth boundary $\Gamma$. Set $\Omega^-=\Omega\backslash\bar{\Omega}^+$ and consider the following Stokes interface problem 
\begin{equation}
\label{interfaceP}
\begin{split}
-\mu \Delta\vect   u+\nabla p&={\vect   f},  \,\quad\;\;\hbox{in}\; \Omega^+\cup\Omega^-,\\
\nabla\cdot \vect   u&=0,\quad\;\;\, \hbox{in}\; \Omega^+\cup\Omega^-,\\
[\![ \vect   u ]\!]&=\vect 0, \quad\;\;\,\hbox{on}\; \Gamma,\\
[\![ \pmb \sigma(\vect   u, p)\vect   n]\!]&=\pmb \psi, \;\quad\;\hbox{on}\; \Gamma,\\
\vect   u&=\vect   u_b, \quad\; \hbox{on}\; \partial \Omega,
\end{split}
\end{equation}
with a constant viscosity $\mu$. Here, $\vect   u=(u^{(1)}, u^{(2)})^T$, $p$ and $\vect   f=(f^{(1)}, f^{(2)})^T$ represent the velocity, pressure and external force, respectively. The stress tensor is defined by 
$$\pmb\sigma(\vect   u, p)
=-p\vect   I+\mu(\nabla \vect   u+(\nabla \vect   u)^T),$$ and $\vect   n$ represents the unit normal vector on $\Gamma$ pointing from $\Omega^+$ to $\Omega^-$. One can refer to Fig. \ref{area} for illustration.
The jump notation across the interface $\Gamma$ is denoted by $[\![ \vect   v]\!]=\vect   v^+-\vect   v^-$ with $v^+$ and $v^-$ be respectively the limit values of $v$ on two sides of the interface. 
It is known that due to the incompressibility constraint,  the boundary data $\vect   u_b$ should satisfy the following compatibility condition
\begin{equation*}
\int_{\partial \Omega} \vect   u_b\cdot \vect   n_b\,ds =0,
\end{equation*}
where $\vect   n_b$ is the outer unit normal on $\partial\Omega$.  In this paper, for simplicity of analysis, assume that $\mu=1$ and $\vect   u_b=\vect 0$, but non-homogeneous boundary conditions will be considered in the numerical examples.
\begin{figure}[ht!]
\centering
\includegraphics[width=0.75\textwidth]{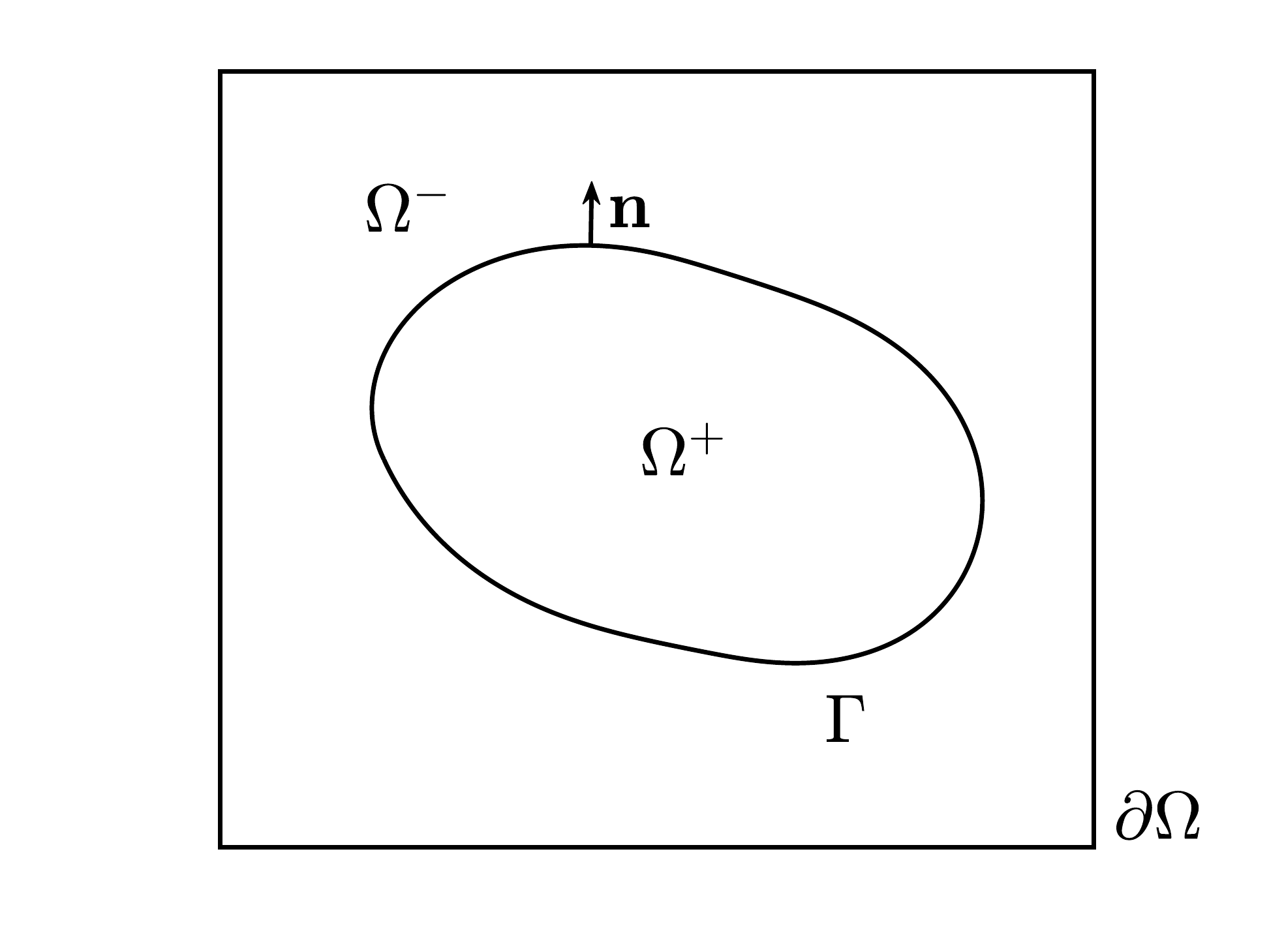}
\setlength{\abovecaptionskip}{-0.0cm}
\setlength{\belowcaptionskip}{-0.0cm}
\caption{A sketch map for the domain $\Omega$ and the interface $\Gamma$.}
\label{area}
\end{figure}

Denote the standard Sobolev space in domain $D$ by $H^k(D)$ and its 
norm by $\|\cdot\|_{H^k(D)}$. Further set the corresponding vector space $\vect   H^k(D):=[H^k(D)]^d$ and  $\|\vect   v\|_{\vect   H^k(D)}:=\sum\limits_{i=1}^d\|v_i\|_{H^k(D)}$. Let 
$L^2(D)$ be the space of all square integrable functions on $D$ and $\vect   L^2(D)$  be the corresponding vector space with inner product $(\cdot, \cdot)$. 
Define spaces 
$$\vect   V\equiv \{\vect   u\in \vect   H^1(\,\Omega),\vect   u|_{\partial \Omega}= \vect 0\}, \qquad 
M\equiv \{\,q\in L^2(\Omega):\int_{\Omega}q(\vect   x)\,d\vect   x=0\,\}.$$
The variational formulation of the interface problem \eqref{interfaceP} reads:
find $(\vect   u, p)\in \vect   V\times M$ such that 
\begin{equation}
\label{VP}
\begin{split}
&a(\vect   u, \vect   v)+ b(\vect   v, p) = (\vect   f, \vect   v)+\langle\pmb\psi, \vect   v\rangle_{\Gamma}, \;\;\forall \;\vect   v\in \vect   V,\\
&b(\vect   u, q) = 0, \;\;\forall\; q\in M,
\end{split}
\end{equation}
where  $ a(\vect   u, \vect   v) = (\nabla \vect   u, \nabla \vect   v), \;
b(\vect   v, q)=-(q,\nabla\cdot\vect   v)$, 
and
$\langle\pmb\psi, \vect   v\rangle_{\Gamma}=\int_{\Gamma}\pmb\psi\cdot\vect   vds$. 

Note that, the right hand side of equation \eqref{VP} is well-defined, thus it is well-posed \cite{hansbo2014cut}. Moreover, the following regularity for the weak solutions
$(\vect   u, p)$ of problem \eqref{VP} holds:

\begin{lemma}[\cite{shibata2003resolvent,wang2015new}]
Assume that $\vect   f\in \vect   L^2(\Omega)$ and $\pmb\psi\in \vect   H^{1/2}(\Gamma)$, then the variational problem \eqref{VP} has a unique solution $(\vect   u,p)\in \vect   V\times M$,
and the priori estimate 
\begin{equation*}
\|\vect   u\|_{\vect   H^1(\Omega)}+\|p\|_{L^2(\Omega)}\leq C(\|\vect   f\|_{\vect   L^2(\Omega)}+\|\pmb\psi\|_{\vect   H^{1/2}(\Gamma)}),
\end{equation*}
where $C$ is a generic constant independent of mesh size. 
\end{lemma}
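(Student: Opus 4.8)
The plan is to recognize \eqref{VP} as an abstract saddle-point problem governed by the bilinear forms $a(\cdot,\cdot)$ on $\vect{V}\times\vect{V}$ and $b(\cdot,\cdot)$ on $\vect{V}\times M$, and to invoke the Babu\v{s}ka--Brezzi (LBB) theory. To apply it I would verify four ingredients: (i) boundedness of the right-hand side functional $F(\vect{v})=(\vect{f},\vect{v})+\langle\pmb\psi,\vect{v}\rangle_\Gamma$ on $\vect{V}$; (ii) continuity of $a$ and $b$; (iii) coercivity of $a$; and (iv) the inf-sup condition for $b$. Once these are in place, Brezzi's theorem yields existence, uniqueness, and a stability bound of exactly the stated form.

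The continuity of $a$ and $b$ is immediate from the Cauchy--Schwarz inequality. For coercivity I would use that every $\vect{v}\in\vect{V}$ vanishes on $\partial\Omega$, so the Poincar\'e--Friedrichs inequality gives $a(\vect{v},\vect{v})=\|\nabla\vect{v}\|_{\vect{L}^2(\Omega)}^2\ge\alpha\|\vect{v}\|_{\vect{H}^1(\Omega)}^2$ for some $\alpha>0$; thus $a$ is coercive on the whole space $\vect{V}$, which is even stronger than coercivity on $\ker b$ and lets one avoid the reduced-problem formulation. To bound $F$, the volume term is controlled by Cauchy--Schwarz as $|(\vect{f},\vect{v})|\le\|\vect{f}\|_{\vect{L}^2(\Omega)}\|\vect{v}\|_{\vect{L}^2(\Omega)}$; for the interface term I would combine the trace theorem $\|\vect{v}\|_{\vect{H}^{1/2}(\Gamma)}\le C\|\vect{v}\|_{\vect{H}^1(\Omega)}$ with the duality pairing on $\Gamma$, giving $|\langle\pmb\psi,\vect{v}\rangle_\Gamma|\le C\|\pmb\psi\|_{\vect{H}^{1/2}(\Gamma)}\|\vect{v}\|_{\vect{H}^1(\Omega)}$. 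Hence $F$ is bounded with $\|F\|_{\vect{V}'}\le C(\|\vect{f}\|_{\vect{L}^2(\Omega)}+\|\pmb\psi\|_{\vect{H}^{1/2}(\Gamma)})$.

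The main obstacle is the inf-sup (LBB) condition
\[
\inf_{q\in M}\sup_{\vect{v}\in\vect{V}}\frac{b(\vect{v},q)}{\|\vect{v}\|_{\vect{H}^1(\Omega)}\|q\|_{L^2(\Omega)}}\ge\beta>0.
\]
This is equivalent to the surjectivity of the divergence from $\vect{V}$ onto $M=\{q\in L^2(\Omega):\int_\Omega q=0\}$ with a bounded right inverse: for each $q\in M$ one must produce $\vect{v}\in\vect{V}$ with $\nabla\cdot\vect{v}=q$ and $\|\vect{v}\|_{\vect{H}^1(\Omega)}\le C\|q\|_{L^2(\Omega)}$. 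I would establish this through the classical Bogovskii construction, equivalently the Ne\v{c}as inequality together with de Rham's theorem, on the Lipschitz domain $\Omega$, which furnishes exactly such a velocity and hence the constant $\beta$ depending only on $\Omega$. This is where the genuine analytic work sits; importantly, the interface $\Gamma$ does not interfere, since $\vect{V}$ and $M$ are the standard global spaces and $b$ carries no interface data.

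With (i)--(iv) verified, Brezzi's theorem provides a unique $(\vect{u},p)\in\vect{V}\times M$ solving \eqref{VP}, together with $\|\vect{u}\|_{\vect{H}^1(\Omega)}+\|p\|_{L^2(\Omega)}\le C(\alpha,\beta)\,\|F\|_{\vect{V}'}$. Substituting the bound for $\|F\|_{\vect{V}'}$ from the second paragraph then yields the stated a priori estimate with a constant $C$ depending only on $\Omega$ through $\alpha$ and $\beta$, and therefore independent of the mesh size, as claimed.
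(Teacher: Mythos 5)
Your proposal is correct, but note that the paper itself offers no proof of this lemma: it is quoted directly from the literature (the citations to Shibata--Shimizu and Wang--Chen attached to the statement), with well-posedness of \eqref{VP} attributed in the preceding sentence to the cut finite element paper of Hansbo et al. What you have written is the standard Babu\v{s}ka--Brezzi argument that those references rely on, and all four of your ingredients check out: continuity of $a$ and $b$ is immediate; coercivity of $a$ on all of $\vect V$ follows from Poincar\'e--Friedrichs because the paper uses the full-gradient form $(\nabla\vect u,\nabla\vect v)$ rather than the symmetric-gradient form (so no Korn inequality is needed); the surface term is bounded via the trace theorem on the interior curve $\Gamma$, using $H^{1/2}(\Gamma)\hookrightarrow L^2(\Gamma)\hookrightarrow H^{-1/2}(\Gamma)$; and the inf-sup condition for the divergence on $\vect H^1_0(\Omega)\times L^2_0(\Omega)$ is the classical Bogovskii/Ne\v{c}as result on a Lipschitz domain. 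You also correctly identify the one point worth emphasizing in the interface setting: the jump data enter only through the right-hand-side functional, while the spaces $\vect V$ and $M$ and the forms $a$, $b$ are those of the standard Stokes problem, so the interface causes no new difficulty at the continuous level (the condition $[\![\vect u]\!]=\vect 0$ is encoded by global $H^1$ regularity, and the stress jump appears weakly through $\langle\pmb\psi,\vect v\rangle_\Gamma$). What your self-contained argument buys over the paper's citation is an explicit account of where the constant $C$ comes from --- the Poincar\'e, trace, and inf-sup constants of $\Omega$ --- which makes the claim ``independent of mesh size'' transparent, since no mesh enters the continuous problem at all; what the paper's route buys is brevity, since this lemma is background rather than a contribution of the paper.
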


\section{A Cartesian Grid-based MAC Scheme}

To simplify the presentation,  
assume that the computational domain is denoted by $\Omega= (0, 1)\times(0, 1)$.
Given a positive integer $N$, define
$$h= 1/N, \;\; x_i=ih,\;\; y_j=jh, \;\; 0\leq i\leq N,\;0\leq j\leq N, $$
assuming the computational domain $\Omega$ is partitioned into $N\times N$
small rectangles of the same shape. 
In the remainder, assume the given partition is fine enough to 
resolve the interface so that 
\begin{itemize}
\item[(1)]
$\Gamma$ does not intersect an edge of a rectangle at more than two points 
unless this edge is part of $\Gamma$;
\item[(2)] If $\Gamma$ meets a rectangle at two points, then these two points 
must be on two different edges of the rectangle.
\end{itemize} 

For a function $v(x, y)$, let $v_{l,m}$ denote $v(x_l, y_m)$, where $l$ may 
take values $i, i-\frac{1}{2}$ for integer $i$, and $m$ may take values 
$j, j-\frac{1}{2}$ for integer $j$.  For discrete functions, 
the discrete difference and Laplacian operators are defined by 
\begin{equation*}
\begin{split}
\delta_{h,1}^+\,v_{l,m}&=h^{-1}\left(v_{l+1,m}
-v_{l,m}\right), \;\;\quad
\delta_{h,1}^-\,v_{l,m}=h^{-1}\left(v_{l,m}
-v_{l-1,m}\right), \\[4pt]
\delta_{h,2}^+\,v_{l,m}&=h^{-1}\left(v_{l,m+1}
-v_{l,m}\right),\;\;\quad
\delta_{h,2}^-\,v_{l,m}=h^{-1}\left(v_{l,m}
-v_{l,m-1}\right), \\
\Delta_{h}v_{l,m} &= \delta_{h, 1}^+\,\delta_{h, 1}^-\,v_{l,m}
+\delta_{h, 2}^+\,\delta_{h, 2}^-\,v_{l,m}.
\end{split}
\end{equation*}

\subsection{The Maker-and-Cell Scheme}
To begin with, four different grid sets are introduced: 
a vertex-centered grid set $\mathcal{T}_h$ (the original partition), 
a cell-centered grid set $\mathcal{T}_h^0$, 
a vertical-edge-centered grid set $\mathcal{T}_h^1$, 
a horizontal-edge-centered grid set $\mathcal{T}_h^2$. 
See Fig. \ref{Fig2} for illustration. 
\begin{figure}[htbp]
\centering
\subfigure[$\mathcal{T}_h$]{
\includegraphics[width=0.22\textwidth]{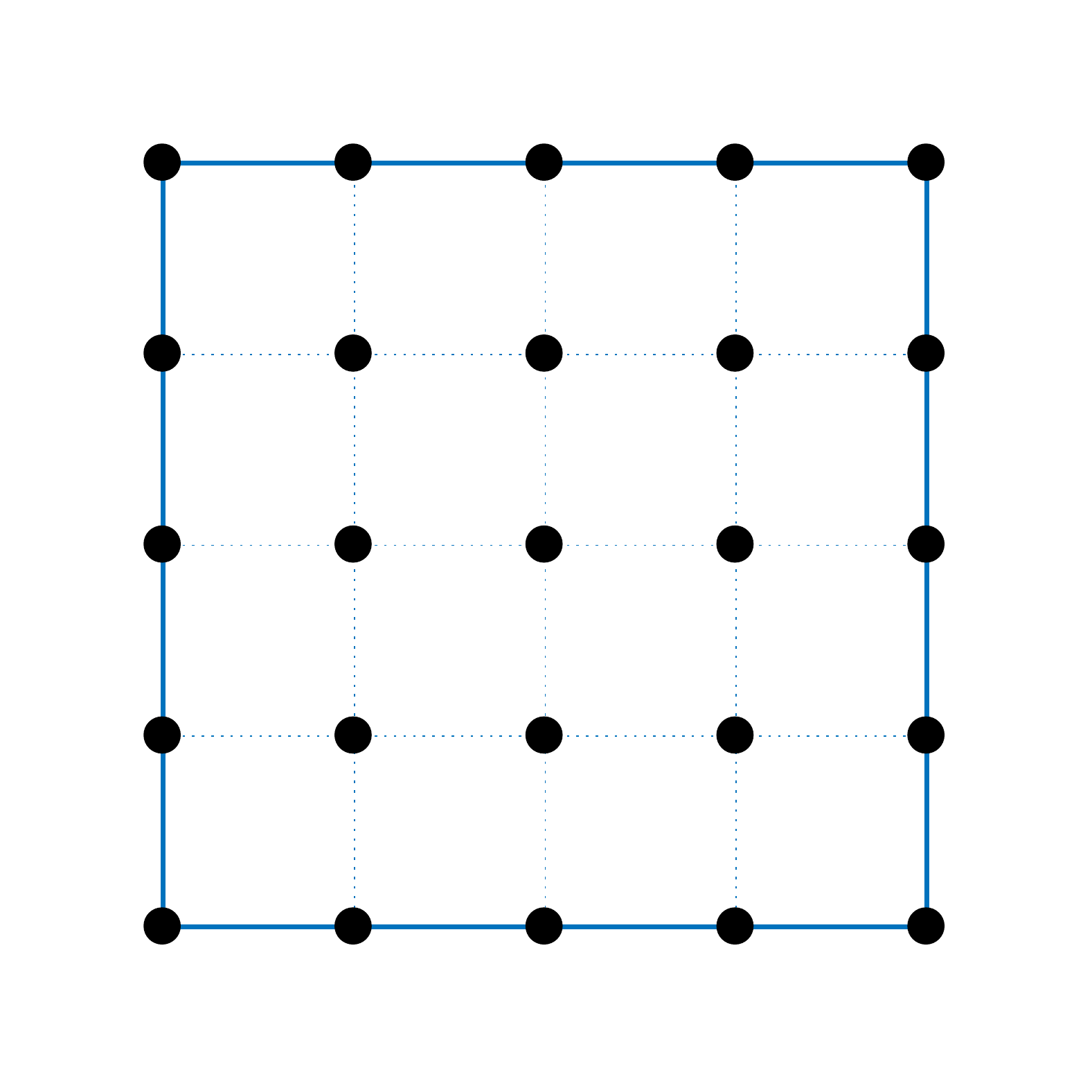}
}
\subfigure[$\mathcal{T}_h^0$]{
\includegraphics[width=0.22\textwidth]{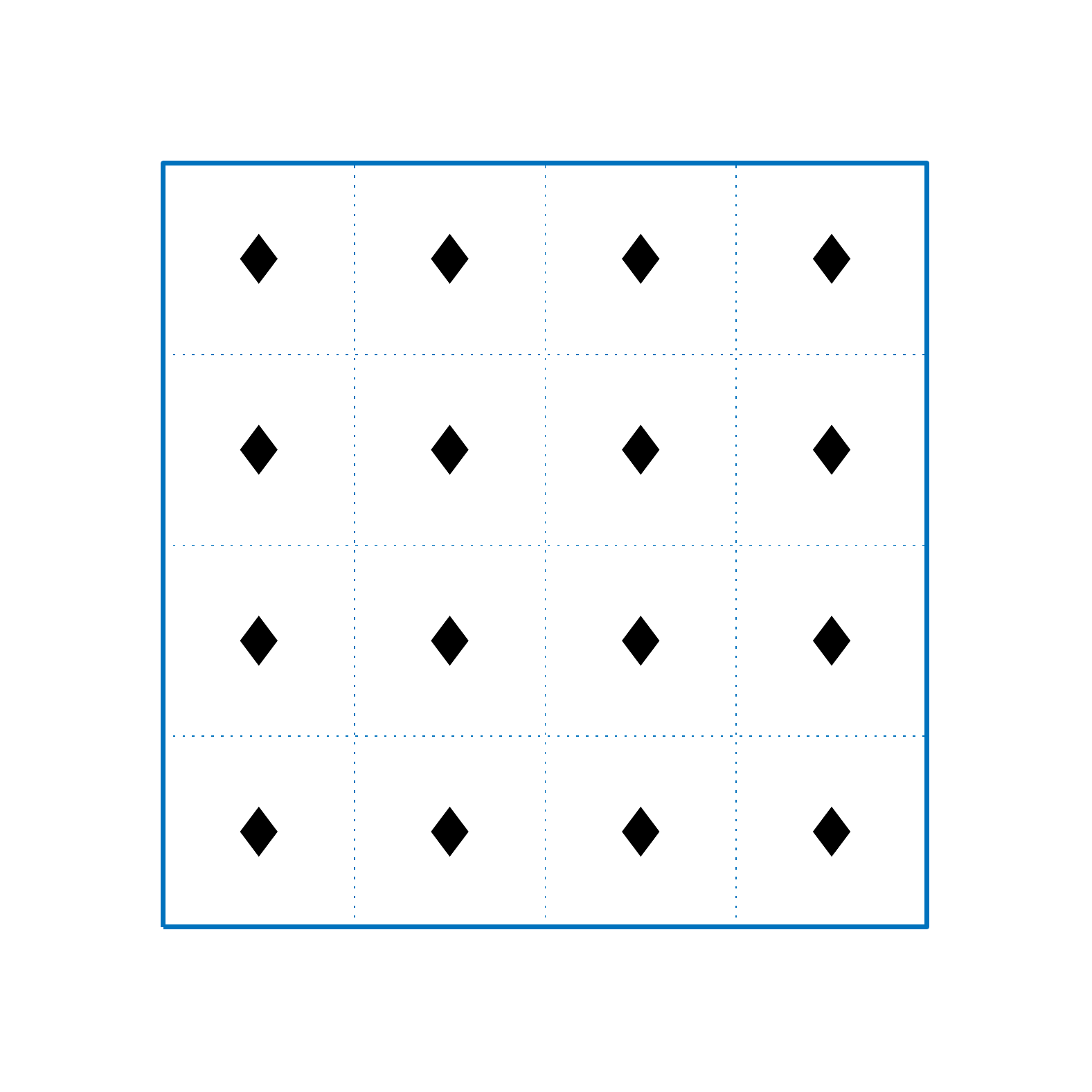}
}
\subfigure[$\mathcal{T}_h^1$]{
\includegraphics[width=0.22\textwidth]{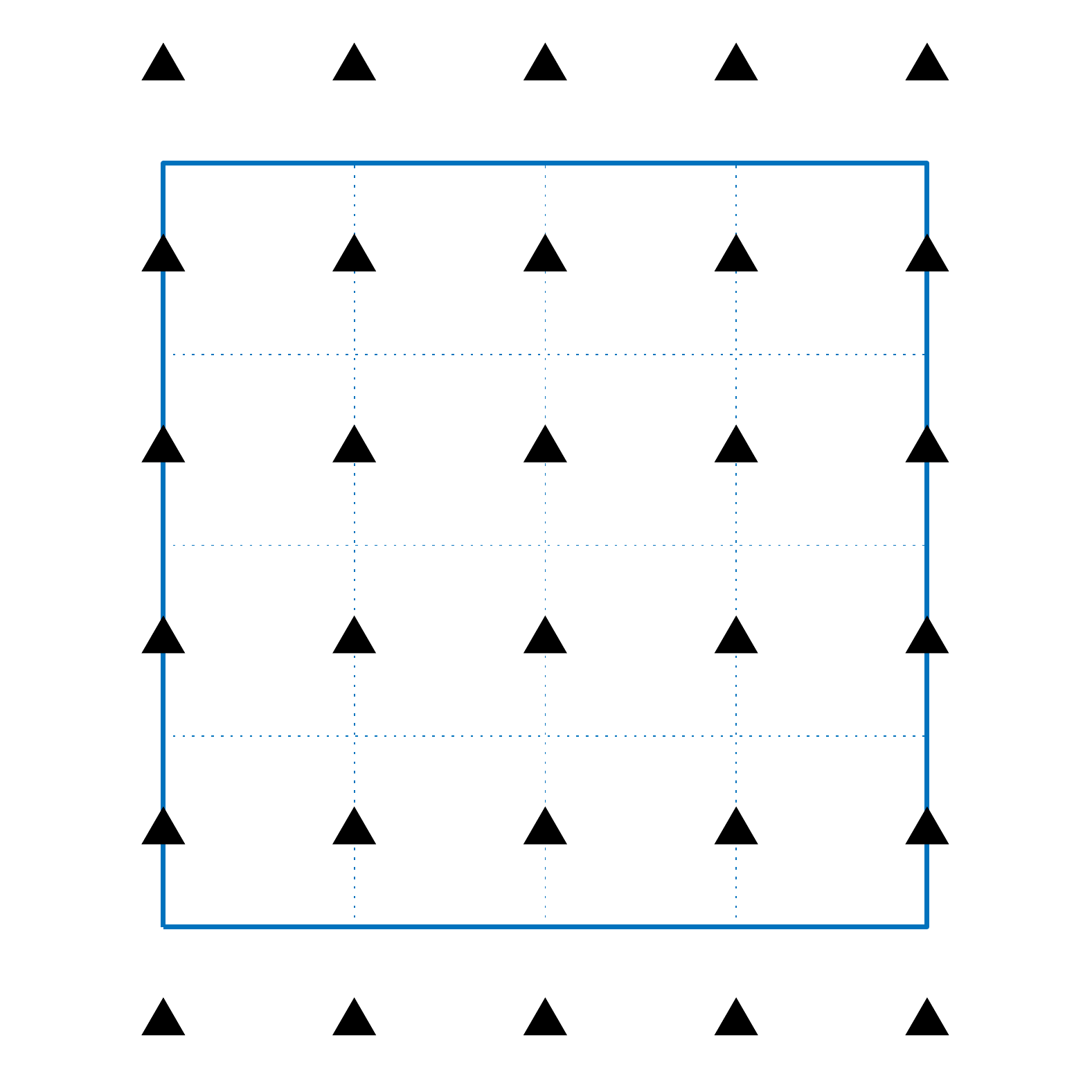}
}
\subfigure[$\mathcal{T}_h^2$]{
\includegraphics[width=0.22\textwidth]{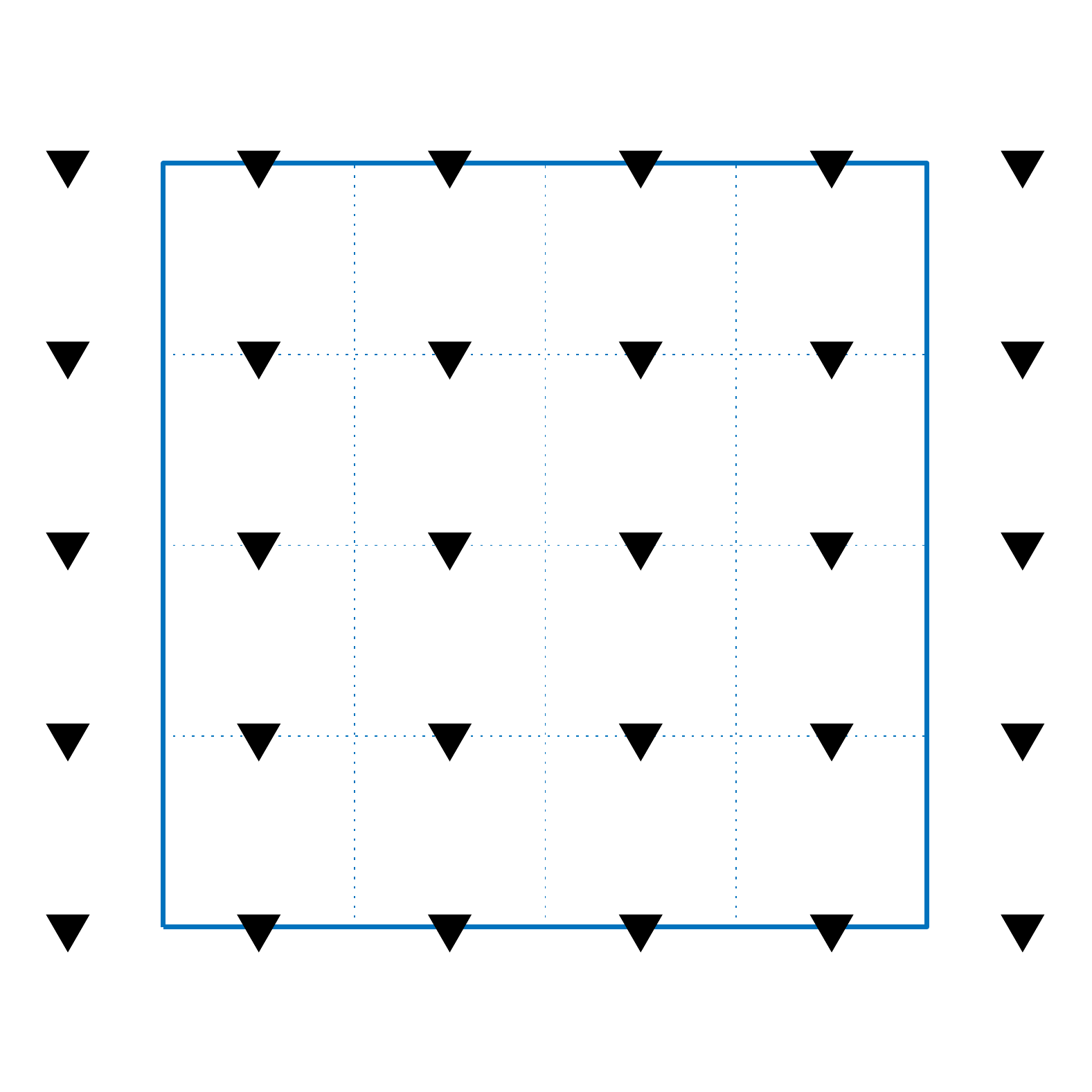}
}
\caption{Four different grid sets.}
\label{Fig2}
\end{figure}

A grid node is called regular with respect to $\Gamma$ if all grid nodes
in the corresponding finite difference stencils are on the same side of the 
interface $\Gamma$. Otherwise, it is irregular.  At a regular node, 
the MAC scheme satisfies 
\begin{equation*}
\label{regerror}
\begin{split}
-\Delta_hu_{i, j-\frac{1}{2}}^{(1)}+\delta_{h,1}^+\,p_{i-\frac{1}{2}, j-\frac{1}{2}}
=f^{(1)}_{i, j-\frac{1}{2}}, \\[4pt]
-\Delta_hu^{(2)}_{i-\frac{1}{2}, j}+\delta_{h,2}^+\,p_{i-\frac{1}{2}, j-\frac{1}{2}}
=f^{(2)}_{i-\frac{1}{2}, j}, \\[4pt]
\delta_{h,1}^-u^{(1)}_{i, j-\frac{1}{2}}+\delta_{h,2}^-u^{(2)}_{i-\frac{1}{2}, j}=0.
\end{split}
\end{equation*}
It is noted that the above MAC scheme has large local truncation errors at 
an irregular node near the interface. In order to achieve the formal second 
order accuracy, appropriate modification is needed.  By adding  some 
correction terms to the right hand side of the discrete system, 
the modified MAC scheme reads
\begin{equation}
\label{MAC}
\begin{split}
-\Delta_hu^{(1)}_{i, j-\frac{1}{2}}+\delta_{h, 1}^+\,p_{i-\frac{1}{2}, j-\frac{1}{2}}
=\tilde{f}^{(1)}_{i, j-\frac{1}{2}},&\quad\; i = 1, \cdots, N-1, \,j=1, \cdots, N,\\[4pt]
-\Delta_hu^{(2)}_{i-\frac{1}{2}, j}+\delta_{h, 2}^+\,p_{i-\frac{1}{2}, j-\frac{1}{2}}
=\tilde{f}^{(2)}_{i-\frac{1}{2}, j},&\quad\; i = 1, \cdots, N,\, j=1, \cdots, N-1,\\[4pt]
\delta_{h,1}^-\,u^{(1)}_{i, j-\frac{1}{2}}+\delta_{h,2}^-\,u^{(2)}_{i-\frac{1}{2}, j}
=\tilde{g}_{i-\frac{1}{2}, j-\frac{1}{2}},&\quad\; i = 1, \cdots, N,\, j=1, \cdots, N,
\end{split}
\end{equation}
with
\begin{equation*}
\begin{split}
&\tilde{f}^{(1)}_{i,j-\frac{1}{2}}=f^{(1)}_{i,j-\frac{1}{2}}
+C\{\Delta u^{(1)}\}_{i,j-\frac{1}{2}}+C\{p_x\}_{i,j-\frac{1}{2}},\\[4pt]
&\tilde{f}^{(2)}_{i-\frac{1}{2}, j}=f^{(2)}_{i-\frac{1}{2}, j}
+C\{\Delta u^{(2)}\}_{i-\frac{1}{2}, j}+C\{p_y\}_{i-\frac{1}{2}, j},\\[4pt]
&\tilde{g}_{i-\frac{1}{2}, j-\frac{1}{2}}=C\{u^{(1)}_x\}_{i-\frac{1}{2}, j-\frac{1}{2}}
+ C\{u^{(2)}_y\}_{i-\frac{1}{2}, j-\frac{1}{2}}.
\end{split}
\end{equation*}
Here, correction terms
\begin{equation*}
\begin{split}
&C\{\Delta u^{(1)}\}_{i, j-\frac{1}{2}}, \quad
  C\{p_x\}_{i, j-\frac{1}{2}}, \quad
  C\{u^{(1)}_x\}_{i-\frac{1}{2}, j-\frac{1}{2}},\\
&C\{\Delta u^{(2)}\}_{i-\frac{1}{2}, j},\quad
  C\{p_y\}_{i-\frac{1}{2}, j},\quad
  C\{ u^{(2)}_y\}_{i-\frac{1}{2}, j-\frac{1}{2}},
\end{split}
\end{equation*} 
are non-zero only at irregular nodes and will improve the truncation errors
near the interface to at least first order accuracy. As to be seen in 
Section \ref{sec; correction}, these correction terms can be computed 
in terms of the jumps of the solution and their derivatives. In fact, 
all jump conditions are also computable and will be derived in Section \ref{sec; jump}.

The boundary condition $u^{(1)}=0$ is imposed at the vertical physical boundary and at the ghost points which are $h/2$ to the left or right of the horizontal physical boundary. Here, the ghost points are eliminated using linear interpolation of the boundary conditions. More specifically, $$u^{(1)}_{i, -\frac{1}{2}}+u^{(1)}_{i, \frac{1}{2}}=0, \qquad u^{(1)}_{i, N-\frac{1}{2}}+u^{(1)}_{i, N+\frac{1}{2}}=0.$$ The boundary condition for the second component of the velocity is imposed similarly.  Taylor expansions on the boundary imply that the approximate boundary conditions are second 
order to the physical no-slip conditions, leading to the fact that the 
truncation errors on the boundaries are on the order of $O(1)$. However, it does not affect the global second-order accuracy, which will be illustrated in the later theoretical analysis. 

\subsection{Linear Solvers}The scheme \eqref{MAC} can be rewritten as a linear 
system in the form of
\begin{equation}
\label{linearS}
\begin{pmatrix}
-\pmb\Delta_h  & G_h^{\rm MAC}  \\[4pt]
D_h^{\rm MAC} &  0\\
\end{pmatrix}
\begin{pmatrix}
\vect   u_h\\[4pt]
p_h\\
\end{pmatrix}=
\begin{pmatrix}
\;\tilde{\!\vect   f}\;\\[4pt]
\!\tilde{g}\\
\end{pmatrix},
\end{equation}
with $\pmb\Delta_h = \hbox{diag}(\,\Delta_h, \Delta_h\,)$,  
$G_h^{\rm MAC} = (\,\delta_{h,1}^+, \delta_{h, 2}^+\,)^T$,  
$D_h^{\rm MAC}=(\,\delta_{h,1}^-, \delta_{h, 2}^-\,)$ 
and $\tilde{\!\vect   f}=(\tilde{f}^{(1)}, \tilde{f}^{(2)})^T$.
There are some fast solvers for the solution of the linear system \eqref{linearS}, 
such as the preconditioned  generalized minimal residual (GMRES)  algorithm \cite{Saad1993GMRES}, the preconditioned conjugate gradient (PCG) method  \cite{Peters2005fast}, the projection method-based pre-conditioner \cite{Griffith2009preconditioner}, the fast Fourier transform (FFT)-based method \cite{Christoph1990FFT}.  In \cite{tan2011implementation, Chen2018A}, a 
Uzawa-type method with fast solver is designed to solve this system. 
In this work, an auxiliary variable $\lambda_h$ and a parameter $\alpha$ 
are introduced to ensure uniqueness of the pressure variable $p_h$. The 
parameter $\alpha$ is chosen so that $\lambda_h$ equals the average of 
the pressure variable over the domain. The following linear system is
obtained,
 \begin{equation}
\label{linearS1}
\begin{pmatrix}
-\pmb\Delta_h  & G_h^{\rm MAC} & 0 \\[4pt]
D_h^{\rm MAC} &  0 & -\gamma \\[4pt]
0 & -\gamma^T & \alpha
\end{pmatrix}
\begin{pmatrix}
\vect   u_h\\[4pt]
p_h\\[4pt]
\lambda_h
\end{pmatrix}=
\begin{pmatrix}
\;\tilde{\!\vect   f}\;\\[4pt]
\!\tilde{g}\\[4pt]
\!0
\end{pmatrix},
\end{equation}
The linear system \eqref{linearS1} can be rewritten as
\begin{equation*}
 (D_h^{\rm MAC}\pmb\Delta_h^{-1}G_h^{\rm MAC}-\dfrac{1}{\alpha}\gamma\gamma^T)p_h=D_h^{\rm MAC}\Delta_h^{-1}\;\tilde{\!\vect   f}+\tilde{g},
\end{equation*}
which is solved with the conjugate gradient (CG) method. In this method, 
each matrix-vector product with 
$D_h^{\rm MAC}\pmb\Delta_h^{-1}G_h^{\rm MAC}$ requires solving 
two Poisson equations. In the present work, an FFT-based Poisson 
solver is employed. Once the pressure $p_h$ is obtained, the velocity 
filed $\vect   u_h$ can be derived by solving
 \begin{equation*}
 -\pmb\Delta_h \vect   u_h=\;\tilde{\!\vect   f}-G_h^{\rm MAC}p_h,
 \end{equation*}
with the FFT-based Poisson solver.

It is  remarked that the correction terms do not modify the coefficient matrix of 
the discrete system, which results from the discretization of the Stokes 
problem without an interface on a Cartesian grid. 
Thus the CG method together with the FFT-based Poisson solvers 
can be applied directly.

\subsection{Correction Terms of the MAC system}
\label{sec; correction}
As stated, since the solution is non-smooth across the interface $\Gamma$, 
the discrete equations by the MAC scheme have to be modified to avoid 
large local truncation errors at irregular grid nodes so that the global 
solution has formal second-order accuracy. In this subsection, derivation
of the correction terms used in the MAC scheme \eqref{MAC} will be 
described in the following three cases. 

\noindent{1. \em $(x_i,y_{j-\frac{1}{2}})$  is an irregular node, see 
Fig \ref{intersection} {\rm (}a{\rm )} for illustration.  }
\begin{itemize}
\item[i)] Assuming  that the interface $\Gamma$ intersects the straight line 
segment between $(x_i, y_{j-\frac{1}{2}})$ and $(x_{i+1}, y_{j-\frac{1}{2}})$ 
at point $(s_i, y_{j-\frac{1}{2}})$ with $x_i < s_i < x_{i+1}$ and 
$\xi_{u^{(1)}} = x_{i+1}-s_i$,  Taylor expansions around the intersection point 
$(s_i, y_{j-\frac{1}{2}})$ give
\begin{equation*}
C\{\Delta u^{(1)}\}_{i,j-\frac{1}{2}} = 
\dfrac{1}{h^2}\Big( [\![ u^{(1)} ]\!]  
+ \xi_{u^{(1)}} [\![ u^{(1)}_x ]\!] 
+\dfrac{1}{2}\xi_{u^{(1)}}^2 [\![ u^{(1)}_{xx} ]\!]   \Big), 
\quad \hbox{if}\; (x_i,y_{j-\frac{1}{2}})\in \Omega^+.
\end{equation*}
\item[ii)] Assuming  that the interface $\Gamma$ intersects the 
straight line segment between $(x_{i}, y_{j-\frac{1}{2}})$ and 
$(x_{i}, y_{j+\frac{1}{2}})$ at point  $(x_i,t_j)$ with 
$y_{j-\frac{1}{2}} < t_j < y_{j+\frac{1}{2}}$ and 
$\eta_{u^{(1)}} = y_{j+\frac{1}{2}}-t_j$,  Taylor expansions around 
the intersection point $(x_{i}, t_{j})$ give
\begin{equation*}
C\{\Delta u^{(1)}\}_{i,j-\frac{1}{2}} = 
\dfrac{1}{h^2}\Big( [\![ u^{(1)} ]\!]  
+ \eta_{u^{(1)}} [\![ u^{(1)}_y ]\!] 
+\dfrac{1}{2}\eta_{u^{(1)}}^2 [\![ u^{(1)}_{yy} ]\!]   \Big), 
\quad \hbox{if}\; (x_{i},y_{j-\frac{1}{2}})\in \Omega^+.
\end{equation*}
\end{itemize}
\begin{figure}[ht!]
\centering
\subfigure[$irregular (x_{i}, y_{j-\frac{1}{2}})$]{
\includegraphics[width=0.22\textwidth]{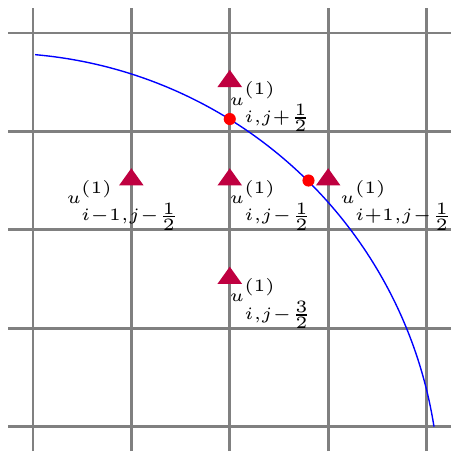}
}
\subfigure[$irregular (x_{i-\frac{1}{2}}, y_{j})$]{
\includegraphics[width=0.22\textwidth]{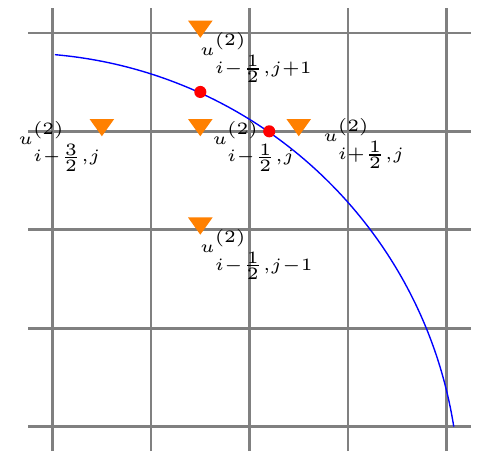}
}
\subfigure[$irregular (x_{i-\frac{1}{2}}, y_{j-\frac{1}{2}})$]{
\includegraphics[width=0.22\textwidth]{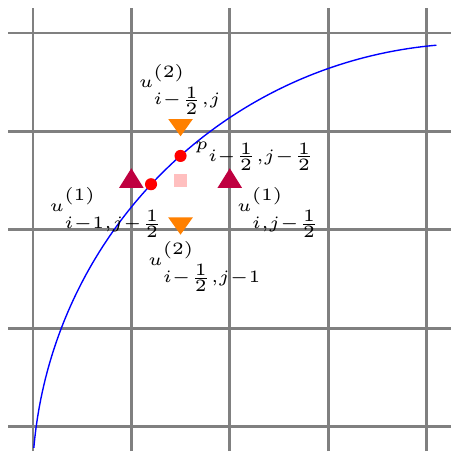}
}
\subfigure[$irregular (x_{i-\frac{1}{2}}, y_{j-\frac{1}{2}})$]{
\includegraphics[width=0.22\textwidth]{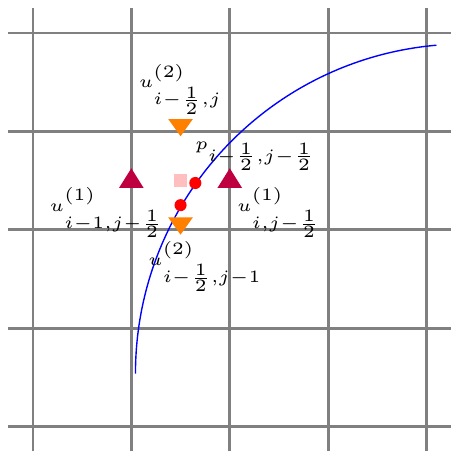}
}
\setlength{\abovecaptionskip}{-0.0cm}
\setlength{\belowcaptionskip}{-0.4cm} 
\caption{A diagram of the interface cutting through a staggered grid 
around an irregular point}
\label{intersection}
\end{figure}
\noindent{2. \em $(x_{i-\frac{1}{2}},y_{j})$  is an irregular node, 
see Fig \ref{intersection} {\rm (}b{\rm )} for illustration.  }
\begin{itemize}
\item[i)] Assuming  that the interface $\Gamma$ intersects the straight line 
segment between $(x_{i-\frac{1}{2}}, y_j)$ and $(x_{i+\frac{1}{2}}, y_{j})$ 
at point  $(s_i, y_{j})$ with $x_{i-\frac{1}{2}} < s_i < x_{i+\frac{1}{2}}$ 
and $\xi_{u^{(2)}} = x_{i+\frac{1}{2}}-s_i$,  Taylor expansions around 
the intersection point $(s_i, y_{j})$ give 
\begin{equation*}
C\{\Delta u^{(2)}\}_{i-\frac{1}{2},j} = 
\dfrac{1}{h^2}\Big( [\![ u^{(2)} ]\!]  
+ \xi_{u^{(2)}} [\![ u^{(2)}_x ]\!] 
+\dfrac{1}{2}\xi_{u^{(2)}}^2 [\![ u^{(2)}_{xx} ]\!]   \Big), 
\quad \hbox{if}\; (x_{i-\frac{1}{2}},y_j)\in \Omega^+.
\end{equation*}
\item[ii)] Assuming  that the interface $\Gamma$ intersects the straight 
line segment between  $(x_{i-\frac{1}{2}}, y_{j})$ and 
$(x_{i-\frac{1}{2}}, y_{j+1})$ at point $(x_{i-\frac{1}{2}},t_j)$ 
with $y_{j} < t_j < y_{j+1}$ and $\eta_{u^{(2)}} = y_{j+1}-t_j$,  
Taylor expansions around the intersection point $(x_{i-\frac{1}{2}}, t_{j})$ give
\begin{equation*}
C\{\Delta u^{(2)}\}_{i-\frac{1}{2},j} = 
\dfrac{1}{h^2}\Big( [\![ u^{(2)} ]\!]  
+ \eta_{u^{(2)}} [\![ u^{(2)}_y ]\!] 
+\dfrac{1}{2}\eta_{u^{(2)}}^2 [\![ u^{(2)}_{yy} ]\!]   \Big), 
\quad \hbox{if}\; (x_{i-\frac{1}{2}},y_{j})\in \Omega^+.
\end{equation*}
\end{itemize}
\noindent{3. \em $(x_{i-\frac{1}{2}},y_{j-\frac{1}{2}})$  is an irregular node, 
see Fig \ref{intersection} {\rm (}c{\rm )}-{\rm (}d{\rm )} for illustration.  }
\begin{itemize}
\item[i)] Assuming  that the interface $\Gamma$ intersects the straight 
line segment between $(x_{i-1}, y_{j-\frac{1}{2}})$ and 
$(x_{i-\frac{1}{2}}, y_{j-\frac{1}{2}})$ at point $(s_i, y_{j-\frac{1}{2}})$ 
with $x_{i-1} < s_i < x_{i-\frac{1}{2}}$ and 
$\xi_{u^{(1)}} = x_{i-1}-s_i,\; \xi_p=x_{i-\frac{1}{2}}-s_i$,  Taylor expansions 
around the intersection point $(s_i, y_{j-\frac{1}{2}})$ give 
\begin{equation*}
C\{u_x^{(1)}\}_{i-\frac{1}{2},j-\frac{1}{2}} = 
\dfrac{1}{h}\Big( [\![ u^{(1)} ]\!]  
+ \xi_{u^{(1)}} [\![ u^{(1)}_x ]\!] 
+\dfrac{1}{2}\xi_{u^{(1)}}^2 [\![ u^{(1)}_{xx} ]\!]   \Big), 
\quad \hbox{if}\; (x_{i-\frac{1}{2}},y_{j-\frac{1}{2}})\in \Omega^+,
\end{equation*}
and 
\begin{equation*}
C\{p_x\}_{i-1,j-\frac{1}{2}}=
-\dfrac{1}{h}\Big( [\![ p ]\!]  
+ \xi_{p} [\![ p_x ]\!]  \Big), 
\quad \hbox{if}\; (x_{i-1},y_{j-\frac{1}{2}})\in \Omega^+.
\end{equation*}
\item[ii)] Assuming  that the interface $\Gamma$ intersects the straight 
line segment between $(x_{i-\frac{1}{2}}, y_{j-\frac{1}{2}})$ and 
$(x_{i}, y_{j-\frac{1}{2}})$ at point  $(s_i, y_{j-\frac{1}{2}})$ with 
$x_{i-\frac{1}{2}} < s_i < x_{i}$ 
and $\xi_{u^{(1)}} = x_{i}-s_i,\; \xi_p=x_{i-\frac{1}{2}}-s_i$,  Taylor expansions 
around the intersection point $(s_i, y_{j-\frac{1}{2}})$ give 
\begin{equation*}
 C\{u_x^{(1)}\}_{i-\frac{1}{2},j-\frac{1}{2}} = 
- \dfrac{1}{h}\Big( [\![ u^{(1)} ]\!]  
+ \xi_{u^{(1)}} [\![ u^{(1)}_x ]\!] 
+\dfrac{1}{2}\xi_{u^{(1)}}^2 [\![ u^{(1)}_{xx} ]\!]   \Big), 
\quad \hbox{if}\; (x_{i-\frac{1}{2}},y_{j-\frac{1}{2}})\in \Omega^+,
\end{equation*}
and 
\begin{equation*}
C\{p_x\}_{i,j-\frac{1}{2}}=
\dfrac{1}{h}\Big( [\![ p ]\!]  
+ \xi_{p} [\![ p_x ]\!]  \Big), 
\quad \hbox{if}\; (x_{i},y_{j-\frac{1}{2}})\in \Omega^+.
\end{equation*}
\item[iii)] Assuming that the interface $\Gamma$ intersects the straight
 line segment between $(x_{i-\frac{1}{2}}, y_{j-1})$ and 
 $(x_{i-\frac{1}{2}}, y_{j-\frac{1}{2}})$ at point  $(x_{i-\frac{1}{2}}, t_{j})$ 
 with $y_{j-1} < t_j < y_{j-\frac{1}{2}}$ and 
 $\eta_{u^{(2)}} = y_{j-1}-t_j,\; \eta_p=y_{i-\frac{1}{2}} - t_j$,  Taylor expansions 
 around the intersection point $(x_{i-\frac{1}{2}}, t_j)$ give 
\begin{equation*}
C\{u_y^{(2)}\}_{i-\frac{1}{2},j-\frac{1}{2}} = 
\dfrac{1}{h}\Big( [\![ u^{(2)} ]\!]  
+ \eta_{u^{(2)}} [\![ u^{(2)}_y ]\!] 
+\dfrac{1}{2}\eta_{u^{(2)}}^2 [\![ u^{(2)}_{yy} ]\!]   \Big), 
\quad \hbox{if}\; (x_{i-\frac{1}{2}},y_{j-\frac{1}{2}})\in \Omega^+,
\end{equation*}
and
\begin{equation*}
C\{p_y\}_{i-\frac{1}{2},j-1}=
-\dfrac{1}{h}\Big( [\![ p ]\!]  
+ \eta_{p} [\![ p_y ]\!]  \Big), 
\quad \hbox{if}\; (x_{i-\frac{1}{2}},y_{j-1})\in \Omega^+.
\end{equation*}
\item[iv)] Assuming that the interface $\Gamma$ intersects the straight 
line segment between $(x_{i-\frac{1}{2}}, y_{j-\frac{1}{2}})$ and 
$(x_{i-\frac{1}{2}}, y_{j})$ at point $(x_{i-\frac{1}{2}}, t_{j})$ 
with $y_{j-\frac{1}{2}} < t_j < y_{j}$ and 
$\eta_{u^{(2)}} = y_{j}-t_j,\; \eta_p=y_{i-\frac{1}{2}} - t_j$,  Taylor expansions 
around the intersection point $(x_{i-\frac{1}{2}}, t_j)$ give
\begin{equation*}
C\{u_y^{(2)}\}_{i-\frac{1}{2},j-\frac{1}{2}} = 
-\dfrac{1}{h}\Big( [\![ u^{(2)} ]\!]  
+ \eta_{u^{(2)}} [\![ u^{(2)}_y ]\!] 
+\dfrac{1}{2}\eta_{u^{(2)}}^2 [\![ u^{(2)}_{yy} ]\!]   \Big),
 \quad \hbox{if}\; (x_{i-\frac{1}{2}},y_{j-\frac{1}{2}})\in \Omega^+,
\end{equation*}
and
\begin{equation*}
C\{p_y\}_{i-\frac{1}{2},j}=
\dfrac{1}{h}\Big( [\![ p ]\!]  
+ \eta_{p} [\![ p_y ]\!]  \Big), 
\quad \hbox{if}\; (x_{i-\frac{1}{2}},y_{j})\in \Omega^+.
\end{equation*}
\end{itemize}

Correction terms at an irregular grid node 
$(x_i,y_{j-\frac{1}{2}}),\,(x_{i-\frac{1}{2}},y_j)$ or 
$(x_{i-\frac{1}{2}},y_{j-\frac{1}{2}})$ located in the domain $\Omega^-$ 
can be obtained similarly. Actually, it is completely the same as that for 
irregular grid nodes in the domain $\Omega^+$ except each correction 
term should be negated. 
It is worth pointing out that derivation of the correction terms indicates 
the local truncation errors of the modified MAC scheme \eqref{MAC}  
at irregular points are first order for the 
first two equations and second order for the third equation. The later
 theoretical analysis shows that this is sufficient to guarantee the global 
 second-order accuracy.

Once again, the jumps of partial derivatives of the solution to the interface, 
which are involved in the correction terms, will be computed 
in Section \ref{sec; jump}. 

\subsection{Calculation for Jump Conditions}
\label{sec; jump}
This section describes the calculation for the jumps of partial derivatives 
of $u^{(1)}, u^{(2)},$ and $p$ respectively, 
which will be uniquely determined by the given jump conditions 
$[\![ \vect   u ]\!]$ and $[\![\pmb \sigma\vect   n]\!]$ 
in \eqref{interfaceP}. 

For simplicity, $x'$ and $y'$ are respectively used to denote $dx/d\eta$ 
and $dy/d\eta$,  $x''$ and $y''$ are respectively used to denote 
$d^2x/d\eta^2$ and $d^2y/d\eta^2$, where $\eta$ represents the 
tangential direction. Differentiating $[\![ \vect u ]\!] = \vect 0$ in \eqref{interfaceP} 
 with respect to the tangential direction $\eta$ gives
\begin{equation}
\label{jumplocal1}
[\![ u^{(1)}_{x} ]\!] x'+[\![ u^{(1)}_y ]\!] y'=0,\qquad
[\![ u^{(2)}_{x} ]\!] x'+[\![ u^{(2)}_y ]\!] y'=0.
\end{equation}
Moreover, equation $[\![ \pmb \sigma(\vect   u, p)\vect   n]\!]=\pmb \psi$ explicitly reads
\begin{subequations}
\label{jumplocal2}
\begin{align}
2[\![ u^{(1)}_x]\!] n_1+\big([\![ u^{(1)}_y]\!] +
[\![ u^{(2)}_x]\!]\big)n_2-[\![ p]\!] n_1&=\psi_1,\\[4pt]
\big([\![ u^{(2)}_x]\!] +
[\![ u^{(1)}_y]\!]\big)n_1+2[\![ u^{(2)}_y]\!] n_2
-[\![ p]\!] n_2&=\psi_2.
\end{align}
\end{subequations}
Therefore, equations \eqref{jumplocal1}-\eqref{jumplocal2}
 together with  
 \begin{equation}
 \label{jumplocal3}
 [\![ u^{(1)}_x ]\!] +[\![ u^{(2)}_y ]\!] =0,   
 \end{equation}
form a $5$ by $5$ linear system, solving which yields the jumps of 
the first order  partial derivatives of the velocity $\vect   u$ and the 
jump of the pressure $p$.
Differentiating the equation \eqref{jumplocal3} along the $x$- and $y$- directions respectively, and taking tangential derivative of \eqref{jumplocal1}-\eqref{jumplocal2}, together with the first equations of \eqref{interfaceP}, an 8 by 8 linear system is obtained, which reads 
\begin{equation*}
\begin{pmatrix}
1  &  0  &  0   &   0  &   1    &0     &  0  &   0  \\[4pt]
0  & 1  &  0   &   0  &   0   & 1   & 0   &  0  \\[4pt] 
 -1  & 0    &   -1    &  0 &   0   &  0   &  1  &  0 \\[4pt]
  0 &   0   &  0   &-1  & 0    &   -1    &    0  &  1 \\[4pt]
(x')^2 &  2x'y' &  (y')^2  & 0 &0 &0 &0 &0\\[4pt]
0 & 0 & 0 & (x')^2 & 2x'y' &  (y')^2 & 0 &0\\[4pt]
2n_1x' &  2n_1y'+n_2x'  & n_2y' & n_2x' & n_2y' & 0 &-n_1x'   &-n_1y'\\[4pt]
0 &  n_1x'& n_1y' &  n_1x' & n_1y'+2n_2x' & 2n_2y' & -n_2x'  &-n_2y'
 \end{pmatrix}
 \begin{pmatrix}
[\![  u^{(1)}_{xx} ]\!]\\[4pt]
[\![  u^{(1)}_{xy} ]\!]\\[4pt]
[\![  u^{(1)}_{yy} ]\!]\\[4pt]
[\![  u^{(2)}_{xx} ]\!]\\[4pt]
[\![  u^{(2)}_{xy} ]\!]\\[4pt]
[\![  u^{(2)}_{yy} ]\!]\\[4pt]
[\![  p_{x} ]\!]\\[4pt]
[\![  p_{y} ]\!]
 \end{pmatrix}=
  \begin{pmatrix}
r_1\\[4pt]
r_2\\[4pt]
r_3\\[4pt]
r_4\\[4pt]
r_5\\[4pt]
r_6\\[4pt]
r_7\\[4pt]
r_8
 \end{pmatrix}
 \end{equation*}
with 
\begin{equation*}
\begin{split}
r_1&=r_2=0,\quad r_3=[\![ f^{(1)}]\!],\quad 
r_4=[\![ f^{(2)}]\!], \\[4pt]
r_5&=-[\![  u^{(1)}_x]\!] x''-[\![  u^{(1)}_y]\!] y'',\quad
r_6=-[\![  u^{(2)}_x]\!] x''-[\![  u^{(2)}_y]\!] y'',\\[4pt]
r_7&=\psi'_1-2[\![ u^{(1)}_x]\!] n'_1
-\big([\![ u^{(1)}_y]\!] +
[\![ u^{(2)}_x]\!]\big)n'_2+[\![ p]\!] n'_1, \\[4pt]
r_8&=\psi'_2-\big([\![ u^{(2)}_x]\!] +
[\![ u^{(1)}_y]\!]\big)n'_1-2[\![ u^{(2)}_y]\!] n'_2
+[\![ p]\!]  n'_2.
\end{split}
\end{equation*} 
From these eight equations, one can get the jumps of the second-order 
partial derivatives $[\![ u^{(1)}_{xx}]\!]$, 
$[\![ u^{(1)}_{xy}]\!]$,
$[\![ u^{(1)}_{yy}]\!]$, $[\![ u^{(2)}_{xx}]\!]$,
$[\![ u^{(2)}_{xy}]\!]$,$[\![ u^{(2)}_{yy}]\!]$,
and the jumps of the first-order partial  derivatives 
$[\![ p_x]\!], [\![ p_y]\!]$.


\section{$\ell^2$ Error Analysis}
\label{sec; truncation}
In this section, the detailed discussion of the $\ell^2$-error analysis 
for the MAC scheme \eqref{MAC} is given.  For this purpose, some notations
are first introduced. Denote the following different grid function spaces:
\begin{eqnarray*}
&&V_h^{(1)}=\Big\{\,v^{(1)}_{i,j-\frac{1}{2}},\;\;
i=0,\cdots,N,\; \;j=0,\cdots,N+1, \\
&& \quad\qquad\qquad\qquad  
v^{(1)}_{0,j-\frac{1}{2}}=v^{(1)}_{N,j-\frac{1}{2}}=0,\;\;
v^{(1)}_{i,-\frac{1}{2}}=-v^{(1)}_{i,\frac{1}{2}},\;\;
v^{(1)}_{i,N+\frac{1}{2}}=-v^{(1)}_{i,N-\frac{1}{2}}\,\Big\},\\[4pt] 
&&V_h^{(2)}=\Big\{\,v^{(2)}_{i-\frac{1}{2},j}, \;\;
i=0,\cdots,N+1,\;\; j=0,\cdots,N, \\
&& \quad\qquad\qquad\qquad 
 v^{(2)}_{i-\frac{1}{2},0}=v^{(2)}_{i-\frac{1}{2},N}=0, \;\;
v^{(2)}_{-\frac{1}{2},j}=-v^{(2)}_{\frac{1}{2},j},\;\;
v^{(2)}_{N+\frac{1}{2},j}=-v^{(2)}_{N-\frac{1}{2},j}\,\Big\},\\[4pt]
&&M_h=\Big\{\,q_{i-\frac{1}{2},j-\frac{1}{2}},\;\;
i=1,\cdots,N,\;\;j=1,\cdots,N, \quad \;\; 
\sum\limits_{i=1}^N\sum\limits_{i=1}^N 
q_{i-\frac{1}{2}, j-\frac{1}{2}}=0 \,\Big\},\\[4pt]
&&W_h^{(1)}=\Big\{\,w_{i,j}, \;\; 
i=0,\cdots,N, \;\;j=0,\cdots,N,
\;\;\quad w_{0,j}=w_{N, j}=0 \,\Big\},\\[4pt]
&&W_h^{(2)}=\Big\{\,w_{i,j},\;\;
 i=0,\cdots,N, \;\;j=0,\cdots,N, 
 \;\;\quad w_{i,0}=w_{i,N}=0\,\Big\},\\[4pt]
&&\vect   V_h = V_h^{(1)}\times V_h^{(2)}.
\end{eqnarray*}
For grid functions $v_h$ and $w_h$, define the discrete $\ell^2$-inner 
products as follows:
\begin{equation*}
\begin{split}
(v_h, w_h)_{V_h^{(1)}}&\equiv h^2\sum\limits_{i=1}^{N-1} 
\sum\limits_{j=1}^{N}v_{i, j-\frac{1}{2}} w_{i,j-\frac{1}{2}},\;\;\quad\;\,
(v_h,  w_h)_{V_h^{(2)}}\equiv h^2\sum\limits_{i=1}^{N} 
\sum\limits_{j=1}^{N-1}v_{i-\frac{1}{2},j} w_{i-\frac{1}{2}, j},\\[4pt]
(v_h,  w_h)_{W_h^{(1)}}& \equiv h^2\sum_{i=1}^{N-1}
\sum_{j=0}^{N}\rho_{j}^y v_{i,j} w_{i,j},\qquad\quad
(v_h,  w_h)_{W_h^{(2)}} \equiv h^2\sum_{i=0}^{N}
\sum_{j=1}^{N-1}\rho_{i}^x v_{i,j} w_{i,j},\\[4pt]
(v_h,  w_h)_{M_h}&\equiv h^2 \sum\limits_{i=1}^{N} 
\sum\limits_{j=1}^{N}v_{i-\frac{1}{2}, j-\frac{1}{2}} w_{i-\frac{1}{2},j-\frac{1}{2}},
\end{split}
\end{equation*}
where $\rho_{0}^x=\rho_{N}^x=\dfrac{1}{2}, \rho_{i}^x=1$ when 
$i=1, 2, \cdots, N-1$, and  $\rho_{0}^y=\rho_{N}^y=\dfrac{1}{2},\rho_{j}^y=1$ 
when $j=1, 2, \cdots, N-1$.  
The corresponding discrete $\ell^2$-norms are denoted as
\begin{equation*}
\|v_h\|^2_{S} \equiv (v_h, v_h)_{S}, \; S = V_h^{(1)}, \,V_h^{(2)},\, W_h^{(1)},\, 
W_h^{(2)}\hbox{or}\; M_h.
\end{equation*}
For a vector-valued function $\vect   v_h=(v_h^{(1)}, v_h^{(2)})$ with 
$v_h^{(1)}\in V_h^{(1)}$ and $v_h^{(2)}\in V_h^{(2)}$, define
\begin{equation*}
\begin{split}
&\|\vect   v_h\|^2\equiv \|v_h^{(1)}\|_{V_h^{(1)}}^2+ \|v_h^{(2)}\|_{V_h^{(2)}}^2,\\[4pt]
&|\vect   v_h|_1^2\;\equiv \|\delta_{h,1}^-\;v_h^{(1)}\|_{M_h}^2+\|\delta_{h,2}^-\,v_h^{(1)}\|_{W_h^{(1)}}^2+\|\delta_{h, 1}^-\,v_h^{(2)}\|_{W_h^{(2)}}^2+\|\delta_{h,2}^-\,v_h^{(2)}\|_{M_h}^2.
\end{split}
\end{equation*}
Moreover, denote the maximum norm for the $r$-th derivatives of 
any function $v$ as 
$$\|v\|_{r, \infty}=\max\left|\partial^{s+l}v/\partial x^s\partial y^l\right|,$$
where $s+l\leq r$, and $s, l\geq 0$. 

Analogous to the continuous cases, there are the discrete 
version for Green's formulae and Poincare inequalities.
For details of the proofs, one can refer to \cite{dong2020Maximum}.
\begin{lemma}
\label{dGf}
{\em For $v_h^{(1)}, \widetilde{v}_h^{(1)}\in V_h^{(1)}, v_h^{(2)},\widetilde{v}_h^{(2)}
\in V_h^{(2)}, p_h\in M_h$,  the following discrete Green's formulae hold:
\begin{eqnarray*}
&&(\delta_{h, 1}^+\,p_h, v_h^{(1)})_{V_h^{(1)}}
=-(p_h, \delta_{h,1}^-\,v_h^{(1)})_{M_h},\\[4pt]
&&(\delta_{h, 2}^+\,p_h, v_h^{(2)})_{V_h^{(2)}}
=-(p_h, \delta_{h, 2}^-\,v_h^{(2)})_{M_h},\\[4pt]
&&(-\Delta_{h}v_h^{(1)},\widetilde{v}_h^{(1)})_{V_h^{(1)}}
=(\delta_{h,1}^-\,v_h^{(1)},\delta_{h, 1}^-\,\widetilde{v}_h^{(1)})_{M_h}+
(\delta_{h, 2}^-\,v_h^{(1)},\delta^-_{h, 2}\widetilde{v}_h^{(1)})_{W_h^{(1)}},\\[4pt]
&&(-\Delta_{h}v_h^{(2)},\widetilde{v}_h^{(2)})_{V_h^{(2)}}
=(\delta_{h, 1}^-\,v_h^{(2)},\delta^-_{h, 1}\,\widetilde{v}_h^{(2)})_{W_h^{(2)}}+
(\delta_{h, 2}^-\,v_h^{(2)},\delta_{h, 2}^-\,\widetilde{v}_h^{(2)})_{M_h}.
\end{eqnarray*}
}
\end{lemma}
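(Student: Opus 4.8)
The plan is to obtain all four identities as instances of discrete summation by parts (Abel summation) carried out one coordinate direction at a time, treating the gradient--divergence pair and the discrete-Laplacian pair separately. The guiding principle is careful bookkeeping of the boundary contributions produced by Abel summation: each such contribution must be absorbed either by the homogeneous Dirichlet conditions hard-wired into $V_h^{(1)}$ and $V_h^{(2)}$, or, in the direction \emph{tangential} to those Dirichlet edges, by the half-weights $\rho_0^x=\rho_N^x=\rho_0^y=\rho_N^y=\tfrac12$ in the $W_h^{(1)},W_h^{(2)}$ inner products together with the antisymmetric ghost-point relations. By the evident $x\leftrightarrow y$ symmetry of the construction it suffices to establish the first and third identities; the second and fourth then follow verbatim after interchanging the two coordinates, the two velocity components, and $W_h^{(1)}\leftrightarrow W_h^{(2)}$.

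For the first identity I would fix the row index $j$ and sum by parts in $i$. Writing $P_i=p_{i-\frac12,j-\frac12}$ and $V_i=v^{(1)}_{i,j-\frac12}$, and noting that the staggering places $\delta_{h,1}^+p_h$ at the vertical-edge nodes of $V_h^{(1)}$ while $\delta_{h,1}^-v_h^{(1)}$ sits at the cell centres of $M_h$, the left-hand side becomes $h\sum_{j=1}^{N}\sum_{i=1}^{N-1}(P_{i+1}-P_i)V_i$. A single Abel summation transfers the difference from $P$ onto $V$; the boundary terms that appear involve $V_0$ and $V_N$, which vanish since $v^{(1)}_{0,j-\frac12}=v^{(1)}_{N,j-\frac12}=0$. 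What remains is exactly $-h\sum_{j=1}^{N}\sum_{i=1}^{N}P_i(V_i-V_{i-1})=-(p_h,\delta_{h,1}^-v_h^{(1)})_{M_h}$, where extending the inner sum to $i=1,\dots,N$ is legitimate precisely because those same endpoint velocities vanish. This already exhibits the mechanism: the Dirichlet data kill the boundary flux and reconcile the index ranges of the two inner products.

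For the third identity I would split $-\Delta_h=-\delta_{h,1}^+\delta_{h,1}^--\delta_{h,2}^+\delta_{h,2}^-$ and dispatch the two directional contributions in turn. In the $x$-direction, which is normal to the Dirichlet edges $i=0,N$ for $v^{(1)}$, Abel summation moves one backward difference onto $\widetilde v_h^{(1)}$; the boundary terms carry the factors $\widetilde v^{(1)}_{0,\cdot}=\widetilde v^{(1)}_{N,\cdot}=0$ and drop out, and since $\delta_{h,1}^-v_h^{(1)}$ is cell-centred the pairing closes up to the \emph{unweighted} $(\delta_{h,1}^-v_h^{(1)},\delta_{h,1}^-\widetilde v_h^{(1)})_{M_h}$. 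The $y$-direction is the delicate step, and I expect it to be the main obstacle. Here $\delta_{h,2}^-v_h^{(1)}$ lives on the horizontal grid lines indexed by $W_h^{(1)}$, and summation by parts in $j$ leaves genuine boundary terms at $j=0$ and $j=N$ that do \emph{not} vanish on their own. The idea is to evaluate the endpoint differences through the ghost relations $v^{(1)}_{i,-\frac12}=-v^{(1)}_{i,\frac12}$ and $v^{(1)}_{i,N+\frac12}=-v^{(1)}_{i,N-\frac12}$, which convert the surviving boundary contribution into $2\sum_i\big(v^{(1)}_{i,\frac12}\widetilde v^{(1)}_{i,\frac12}+v^{(1)}_{i,N-\frac12}\widetilde v^{(1)}_{i,N-\frac12}\big)$.

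The final recognition step is to check that this boundary contribution coincides exactly with the $j=0$ and $j=N$ entries of the weighted sum defining the $W_h^{(1)}$ inner product. Substituting the ghost relations into $(\delta_{h,2}^-v^{(1)})_{0}$ and $(\delta_{h,2}^-v^{(1)})_{N}$ (and likewise for $\widetilde v^{(1)}$) produces a factor $4h^{-2}$, which the half-weight $\rho_0^y=\rho_N^y=\tfrac12$ trims down to the matching $2h^{-2}$; the interior indices $j=1,\dots,N-1$ carry weight one and pair up directly with the residual sum $h^2\sum_i\sum_{j=1}^{N-1}(\delta_{h,2}^-v^{(1)})_j(\delta_{h,2}^-\widetilde v^{(1)})_j$. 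Assembling the endpoint and interior terms reconstitutes $(\delta_{h,2}^-v_h^{(1)},\delta_{h,2}^-\widetilde v_h^{(1)})_{W_h^{(1)}}$, and adding the $x$-contribution yields the third identity. This dovetailing of the antisymmetric ghost conditions with the half-weights is exactly why those two ingredients are designed the way they are, and the fourth identity is obtained by the symmetry noted above.
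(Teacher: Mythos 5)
Your proposal is correct: I verified the Abel-summation bookkeeping, and in particular the delicate $y$-direction step of the third identity works exactly as you describe --- the ghost relations give $(\delta_{h,2}^-v^{(1)})_{i,0}=2h^{-1}v^{(1)}_{i,\frac12}$ and $(\delta_{h,2}^-v^{(1)})_{i,N}=-2h^{-1}v^{(1)}_{i,N-\frac12}$, so the surviving boundary sum $2\sum_i\bigl(v^{(1)}_{i,\frac12}\widetilde v^{(1)}_{i,\frac12}+v^{(1)}_{i,N-\frac12}\widetilde v^{(1)}_{i,N-\frac12}\bigr)$ is precisely the $j=0,N$ portion of the half-weighted $W_h^{(1)}$ inner product, and the $x\leftrightarrow y$ symmetry disposes of the second and fourth identities. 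Note that the paper itself gives no proof of this lemma --- it defers to the reference [dong2020Maximum] --- so there is no in-paper argument to compare against; your summation-by-parts derivation is the standard proof that such a reference would contain, and it is complete.
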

\begin{lemma}
{\em Under the assumption that $v_h^{(1)}\in V_h^{(1)}, v_h^{(2)}\in V_h^{(2)}$, 
it holds that 
\label{disPoin}
\begin{equation*}
\begin{split}
\|v_h^{(1)}\|^2_{V_h^{(1)}}&\leq C_1(\|\delta_{h,1}^-\,v_h^{(1)}\|_{M_h}^2
+\|\delta_{h,2}^-\,v_h^{(2)}\|_{W_h^{(1)}}^2),\\[4pt]
\|v_h^{(2)}\|^2_{V_h^{(2)}}&\leq C_2(\|\delta_{h,1}^-\,v_h^{(2)}\|_{W_h^{(2)}}^2
+\|\delta_{h,2}^-\,v_h^{(2)}\|_{M_h}^2),
\end{split}
\end{equation*}
where the constants $C_1$  and $C_2$ only depend on the area of the 
computational domain.}
\end{lemma}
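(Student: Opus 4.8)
The plan is to establish each inequality via a discrete fundamental theorem of calculus in the coordinate direction along which the relevant velocity component satisfies the homogeneous Dirichlet condition, followed by Cauchy--Schwarz and summation --- the standard route to a discrete Poincar\'e estimate. I treat the first inequality in detail; the second is entirely analogous with the two coordinate directions interchanged.

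First I would exploit the boundary data built into $V_h^{(1)}$, namely $v^{(1)}_{0,j-\frac12}=0$ for every $j$. Telescoping the backward differences along the $x$-direction gives the exact identity
\begin{equation*}
v^{(1)}_{i,j-\frac12}=h\sum_{k=1}^{i}\delta_{h,1}^-\,v^{(1)}_{k,j-\frac12},
\qquad 1\le i\le N-1.
\end{equation*}
Applying the discrete Cauchy--Schwarz inequality to this sum and using $ih=x_i\le 1$ yields the pointwise estimate
\begin{equation*}
\big|v^{(1)}_{i,j-\frac12}\big|^2
\le x_i\,h\sum_{k=1}^{N}\big|\delta_{h,1}^-\,v^{(1)}_{k,j-\frac12}\big|^2
\le h\sum_{k=1}^{N}\big|\delta_{h,1}^-\,v^{(1)}_{k,j-\frac12}\big|^2 .
\end{equation*}

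Next I would multiply by $h^2$ and sum over $i=1,\dots,N-1$ and $j=1,\dots,N$. Since the right-hand side above is independent of $i$, summation in $i$ contributes a factor $(N-1)h\le 1$, while the remaining double sum over $j$ and $k$ is precisely $\|\delta_{h,1}^-\,v_h^{(1)}\|_{M_h}^2$, the backward difference $\delta_{h,1}^-\,v^{(1)}_{i,j-\frac12}$ being naturally indexed at the cell centers $(x_{i-\frac12},y_{j-\frac12})$ that carry the $M_h$ inner product. This already gives $\|v_h^{(1)}\|_{V_h^{(1)}}^2\le \|\delta_{h,1}^-\,v_h^{(1)}\|_{M_h}^2$, so adjoining the nonnegative term $\|\delta_{h,2}^-\,v_h^{(1)}\|_{W_h^{(1)}}^2$ on the right only weakens the bound; the stated inequality therefore holds with a constant $C_1$ controlled by the side length of $\Omega$ in the $x$-direction (here equal to $1$). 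The second inequality follows identically, now telescoping in $y$ and using $v^{(2)}_{i-\frac12,0}=0$ from the definition of $V_h^{(2)}$, which produces $\|v_h^{(2)}\|_{V_h^{(2)}}^2\le\|\delta_{h,2}^-\,v_h^{(2)}\|_{M_h}^2$.

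There is no serious obstacle here; the argument is the discrete shadow of the one-dimensional Poincar\'e inequality, and in fact only one of the two difference quotients on each right-hand side is genuinely needed. The only points requiring care are bookkeeping ones: confirming that the homogeneous boundary value sits at the endpoint used to start the telescoping (left--right for $v^{(1)}$, bottom--top for $v^{(2)}$), and verifying that the half-integer index carried by each backward difference lands in the grid-function space whose norm appears on the right, so that the summed quantity is exactly the claimed $M_h$-norm rather than a mismatched one. The ghost-point relations in the definitions of $V_h^{(1)}$ and $V_h^{(2)}$ play no role in this particular estimate.
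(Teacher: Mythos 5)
Your proof is correct. Note that the paper itself does not prove this lemma at all---it defers both Lemma \ref{dGf} and Lemma \ref{disPoin} to the reference \cite{dong2020Maximum}---and your argument (telescoping the backward differences from the Dirichlet edge $v^{(1)}_{0,j-\frac12}=0$, resp. $v^{(2)}_{i-\frac12,0}=0$, applying Cauchy--Schwarz, and summing, so that only the difference quotient in the direction of the homogeneous boundary condition is needed) is exactly the standard discrete Poincar\'e argument one would find there; your index bookkeeping placing $\delta_{h,1}^-v_h^{(1)}$ at the cell centers carrying the $M_h$-norm, and your remark that the ghost-point relations are irrelevant here, are both accurate. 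As a side benefit, since you never use the second term on the right-hand side, your proof is insensitive to the apparent typo in the lemma as stated, where that term is written as $\|\delta_{h,2}^-\,v_h^{(2)}\|_{W_h^{(1)}}^2$ although consistency with the definition of $|\vect{v}_h|_1$ indicates it should be $\|\delta_{h,2}^-\,v_h^{(1)}\|_{W_h^{(1)}}^2$.
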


An important part of the theoretical analysis is the discrete LBB condition 
for the MAC discretization, which was first proven by Shin and Strickwerda 
on uniform meshes in \cite{shin1997inf} and  by Blanc on non-uniform 
meshes in \cite{blanc1999error,blanc2005convergence}.  Later, 
the results were improved in the work by Gallou\" et et al. \cite{Herbin2012W1}.The discrete LBB condition is stated in the following lemma.
\begin{lemma}
\label{LBB}
{\rm There exists a constant $\beta>0$ independent of  the mesh parameter 
$h$ such that
\begin{equation}
\sup\limits_{\vect   v_h\in\vect   V_h}\frac{b_h(\vect   v_h, q_h)}{|\vect   v_h|_1}\geq 
\beta\|q_h\|_{M_h}, \;\;\forall q_h\in M_h,
\end{equation}
where
\begin{equation*}
b_h(\vect   v_h, q_h)=-h^2\sum\limits_{i=1}^N\sum\limits_{j=1}^N
q_{i-\frac{1}{2}, j-\frac{1}{2}}\nabla_h\cdot\vect   v_{i,j}, 
\;\vect   v_h\in \vect   V_h,\;q_h\in M_h,
\end{equation*}
with 
$\nabla_h\cdot\vect   v_{i,j}=\delta_{h,1}^-\,v^{(1)}_{i, j-\frac{1}{2}}
+\delta_{h,2}^-\,v^{(2)}_{i-\frac{1}{2}, j}.$
}
\end{lemma}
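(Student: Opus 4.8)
The plan is to prove the discrete inf--sup (LBB) bound by the Fortin-operator technique, transferring the \emph{continuous} LBB condition for the Stokes problem to the staggered grid. Given $q_h\in M_h$, let $\tilde q_h$ be its piecewise-constant extension, equal to $q_{i-\frac12,j-\frac12}$ on the cell $K_{ij}=(x_{i-1},x_i)\times(y_{j-1},y_j)$. The zero-mean constraint defining $M_h$ gives $\tilde q_h\in L^2_0(\Omega)$, and a direct computation yields $\|\tilde q_h\|_{L^2(\Omega)}=\|q_h\|_{M_h}$. By the classical surjectivity of the divergence onto mean-zero pressures (the continuous LBB condition; e.g.\ the Bogovski\u\i{} construction), there exists $\vect w\in\vect V$ with $\nabla\cdot\vect w=-\tilde q_h$ and $\|\vect w\|_{\vect H^1(\Omega)}\le C_\Omega\|q_h\|_{M_h}$, where $C_\Omega$ depends only on $\Omega$.

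First I would define a Fortin interpolant $\Pi_h\vect w\in\vect V_h$ by averaging the \emph{normal} velocity component over each grid edge,
\[
(\Pi_h\vect w)^{(1)}_{i,j-\frac12}=\frac1h\int_{y_{j-1}}^{y_j}w^{(1)}(x_i,y)\,dy,\qquad (\Pi_h\vect w)^{(2)}_{i-\frac12,j}=\frac1h\int_{x_{i-1}}^{x_i}w^{(2)}(x,y_j)\,dx,
\]
extended by the odd-reflection rule at the boundary so that the homogeneous and ghost-point constraints of $\vect V_h$ hold (they are automatic on the physical-boundary edges, since $\vect w$ vanishes on $\partial\Omega$). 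The decisive feature is the commuting relation: by the divergence theorem on each cell, $\nabla_h\cdot(\Pi_h\vect w)_{i,j}=\frac1{h^2}\int_{K_{ij}}\nabla\cdot\vect w=-q_{i-\frac12,j-\frac12}$. Substituting into $b_h$ collapses it to $b_h(\Pi_h\vect w,q_h)=h^2\sum_{i,j}q_{i-\frac12,j-\frac12}^2=\|q_h\|_{M_h}^2$.

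Granting a mesh-uniform stability bound $|\Pi_h\vect w|_1\le C\,\|\vect w\|_{\vect H^1(\Omega)}$, the lemma follows at once: taking $\vect v_h=\Pi_h\vect w$ in the supremum gives $b_h(\vect v_h,q_h)/|\vect v_h|_1\ge \|q_h\|_{M_h}^2/(C C_\Omega\|q_h\|_{M_h})=\beta\|q_h\|_{M_h}$ with $\beta=(C C_\Omega)^{-1}$ independent of $h$. The whole difficulty is therefore concentrated in this stability estimate. For each difference quotient in $|\cdot|_1$ I would rewrite the edge averages via the fundamental theorem of calculus. The differences taken \emph{across} an edge collapse to honest two-dimensional cell integrals of a first derivative of $\vect w$ (for instance $\delta_{h,1}^-(\Pi_h\vect w)^{(1)}_{i,j-\frac12}=h^{-2}\int_{K_{ij}}w^{(1)}_x$), and Cauchy--Schwarz plus summation control them by $\|\nabla\vect w\|_{\vect L^2}$ cleanly.

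The main obstacle is the \emph{tangential} differences, i.e.\ the terms living in $W_h^{(1)}$ and $W_h^{(2)}$: the same manipulation leaves an integral of $\partial\vect w$ along a single grid line $\{x=x_i\}$ (or $\{y=y_j\}$), so the pure edge-average operator only bounds them by $h\sum_i\|\partial\vect w(x_i,\cdot)\|_{L^2}^2$, a sum of one-dimensional traces which is \emph{not} majorized by $\|\vect w\|_{\vect H^1(\Omega)}$ for a generic $\vect w\in\vect H^1$. Overcoming this is the crux, and the standard remedy is to replace the bare edge average by a regularized Fortin operator $\Pi_h=\Pi_h^{\mathrm{SZ}}+\mathcal C_h$, where $\Pi_h^{\mathrm{SZ}}$ is a Scott--Zhang/Cl\'ement-type quasi-interpolant built from local \emph{volume} averages (hence $H^1$-stable with mesh-independent constants) and $\mathcal C_h$ is a divergence-preserving correction restoring the commuting identity above; equivalently, one may invoke the direct staggered-grid constructions of Shin--Strickwerda \cite{shin1997inf} and Blanc \cite{blanc1999error}. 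As a self-contained alternative on the square $\Omega=(0,1)^2$, one can bypass the Fortin operator entirely: solve the discrete Neumann pressure--Poisson problem $\Delta_h\phi_h=q_h$ and set $\vect v_h=\nabla_h\phi_h$, so that the discrete Green's formulae of Lemma \ref{dGf} give $b_h(\vect v_h,q_h)=\|q_h\|_{M_h}^2$, while the denominator bound $|\nabla_h\phi_h|_1\le C\|\Delta_h\phi_h\|_{M_h}$ is a discrete $H^2$ estimate provable by the discrete cosine transform, since on the square every second-difference Fourier symbol is dominated by the Laplacian symbol. Either route reduces the lemma to a single mesh-uniform elliptic (Fortin or discrete $H^2$) estimate, which is the one genuinely technical ingredient.
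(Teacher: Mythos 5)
The first thing to note is that the paper does not prove Lemma \ref{LBB} at all: the lemma is quoted from the literature, with the proof attributed to Shin--Strikwerda \cite{shin1997inf}, Blanc \cite{blanc1999error,blanc2005convergence}, and Gallou\"et--Herbin--Latch\'e \cite{Herbin2012W1}. Your main route is therefore not so much ``different from the paper'' as a reconstruction of the argument underlying the paper's citations. The pieces you work out are correct: the piecewise-constant extension of $q_h$ with $\|\tilde q_h\|_{L^2(\Omega)}=\|q_h\|_{M_h}$, the Bogovski\u\i{} field $\vect w$, the edge-average interpolant with the commuting identity $\nabla_h\cdot(\Pi_h\vect w)_{i,j}=-q_{i-\frac12,j-\frac12}$ (hence $b_h(\Pi_h\vect w,q_h)=\|q_h\|_{M_h}^2$), and the reduction of the whole lemma to the mesh-uniform stability bound $|\Pi_h\vect w|_1\le C\|\vect w\|_{\vect H^1(\Omega)}$. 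Your diagnosis that the bare edge-average operator fails this bound in the tangential differences (traces of $\nabla\vect w$ on grid lines are not even defined for $\vect w\in\vect H^1$) is exactly why the literature works with a regularized Fortin operator; the cited reference \cite{Herbin2012W1} is literally the stability of such an operator for the MAC scheme. So your primary plan, completed either by citing that stability result or by carrying out the Cl\'ement-plus-divergence-correction construction, is sound, and it supplies more structure than the paper itself does.

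Your ``self-contained alternative,'' however, contains a genuine flaw. If $\phi_h$ solves the discrete Neumann problem and $\vect v_h=\nabla_h\phi_h$, then $\vect v_h$ has zero \emph{normal} component on $\partial\Omega$ but generically nonzero \emph{tangential} component there, while $\vect V_h$ encodes full no-slip through the ghost-value reflections $v^{(1)}_{i,-\frac12}=-v^{(1)}_{i,\frac12}$, etc. If you force membership in $\vect V_h$ by defining the ghost values oddly, the boundary rows of the seminorm become $\delta^-_{h,2}v^{(1)}_{i,0}=2h^{-1}v^{(1)}_{i,\frac12}$, and since $v^{(1)}_{i,\frac12}\approx\partial_x\phi(x_i,0)$ is $O(1)$ along the boundary for generic data, these rows contribute $O(h^{-1})\|q_h\|_{M_h}^2$ to $|\vect v_h|_1^2$, so the quotient $b_h(\vect v_h,q_h)/|\vect v_h|_1$ degenerates like $h^{1/2}$ instead of staying bounded below. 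This is the discrete shadow of the continuous fact that the gradient of a Neumann potential lies in $H(\mathrm{div};\Omega)$ but not in $\vect H^1_0(\Omega)$: the pressure--Poisson trick proves an inf--sup condition relative to the $H(\mathrm{div})$ norm, not relative to the $H^1_0$-type seminorm $|\cdot|_1$ used in this lemma, and no discrete $H^2$ estimate can repair that mismatch. Keep the Fortin route and drop the alternative, or restrict it to periodic boundary conditions where no such boundary obstruction arises.
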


With the LBB condition in hand, the convergence of numerical solutions of 
the Stokes problem will be considered later. As explained before, truncation 
errors at the internal regular points are of second order, at the internal 
irregular points are of first order and are only $\mathcal{O}(1)$ near the 
boundary points.  In order to obtain the formal second order convergence, 
an important ingredient is the construction of approximate solutions that 
satisfy the discrete equations \eqref{MAC} to  high order accuracy. The 
following lemma states how to construct the desired auxiliary functions.

\begin{lemma}
\label{consistence}
{\rm Assume the solutions of the Stokes  interface equations  are sufficiently 
smooth in $\Omega$ excluding $\Gamma$. There exist functions 
$\tilde{u}^{(1)}, \tilde{u}^{(2)}$ and $\tilde{p}$, which are $\mathcal{O}(h^2)$ 
perturbations of the exact solutions $u^{(1)}, u^{(2)}$ and $p$, to satisfy 
\begin{equation}
\label{diuvp}
\begin{split}
&\widetilde{u}^{(1)}(x, y, h) = u^{(1)}(x, y)+h^2\hat{u}^{(1)}(x, y),\\[4pt]
&\widetilde{u}^{(2)}(x, y, h) = u^{(2)}(x, y)+h^2\hat{u}^{(2)}(x, y),\\[4pt]
&\widetilde{p}(x, y, h) = p(x, y)+h^2\hat{p}(x, y),
\end{split}
\end{equation} 
where the functions $\hat{u}^{(1)}, \hat{u}^{(2)}$ and $\hat{p}$ and  their 
derivatives can be bounded in terms of the analytic solutions.  
Let $\widetilde{u}^{(1)}_{i, j-\frac{1}{2}}= \widetilde{u}(x_i, y_{j-\frac{1}{2}},h)$. 
In the same manner,  $\widetilde{u}^{(2)}_{i-\frac{1}{2},j}$,\, 
$ \widetilde{p}_{i-\frac{1}{2}, j-\frac{1}{2}}$,
\,$\hat{u}^{(1)}_{i, j-\frac{1}{2}}$,
\, $\hat{u}^{(2)}_{i-\frac{1}{2},j}$,\, 
$\hat{p}_{i-\frac{1}{2}, j-\frac{1}{2}}$ are defined. 
These functions satisfy the discrete equations to higher order accuracy 
in the following sense:
\begin{subequations}
\label{high}
\begin{align}
&-\Delta_h\widetilde{u}_{i, j-\frac{1}{2}}^{(1)}+\delta_{h,1}^+
\,\widetilde{p}_{i-\frac{1}{2}, j-\frac{1}{2}}=\tilde{f}^{(1)}_{i, j-\frac{1}{2}}
+\widetilde{R}^{(1)}_{i, j-\frac{1}{2}}, \;\;\; \,{\rm in }\; V_h^{(1)}\label{h1},\\[4pt]
&-\Delta_h\widetilde{u}^{(2)}_{i-\frac{1}{2}, j}+\delta_{h,2}^+
\,\widetilde{p}_{i-\frac{1}{2}, j-\frac{1}{2}}=\tilde{f}^{(2)}_{i-\frac{1}{2}, j}
+\widetilde{R}^{(2)}_{i-\frac{1}{2}, j}, \; \;\;\,\hbox{\rm in }\; 
V_h^{(2)},\label{h2}\\[4pt]
&\delta_{h,1}^-\,\widetilde{u}^{(1)}_{i, j-\frac{1}{2}}
+\delta_{h,2}^-\,\widetilde{u}^{(2)}_{i-\frac{1}{2}, j}
=\tilde{g}_{i-\frac{1}{2}, j-\frac{1}{2}}
+ \widetilde{R}_{i-\frac{1}{2}, j-\frac{1}{2}}, \; \;{\rm in }\;\; M_h,\label{h3}
\end{align}
\end{subequations}
with the boundary conditions
\begin{equation}
\label{bdh}
\begin{split}
&\widetilde{u}^{(1)}(x, -\frac{h}{2})=-\widetilde{u}^{(1)}(x, \frac{h}{2})
+\mathcal{O}(h^4),\;\;\;\;
\widetilde{u}^{(1)}(x, 1-\frac{h}{2})=-\widetilde{u}^{(1)}(x, 1+\frac{h}{2})
+\mathcal{O}(h^4),\\[4pt]
&\widetilde{u}^{(2)}(-\frac{h}{2}, y)=-\widetilde{u}^{(2)}(\frac{h}{2}, y)
+\mathcal{O}(h^4),\;\;\;\;
\widetilde{u}^{(2)}(1-\frac{h}{2}, y)=-\widetilde{u}^{(2)}(1+\frac{h}{2}, y)
+\mathcal{O}(h^4),\\[4pt]
&\widetilde{u}^{(1)}(0, y)=\widetilde{u}^{(1)}(1, y)=\widetilde{u}^{(2)}(x, 0)
=\widetilde{u}^{(2)}(x, 1),
\end{split}
\end{equation}
where $\widetilde{R}^{(1)}, \widetilde{R}^{(2)}, \widetilde{R}$ hold that 
\begin{equation}
\begin{split}
&|\widetilde{R}^{(1)}|\leq Ch^2(\|u^{(1)}\|_{4,\infty}+\|p\|_{3,\infty}),\;\;\;
|\widetilde{R}^{(2)}|\leq Ch^2(\|u^{(2)}\|_{4, \infty}+\|p\|_{3,\infty}),\\
&|\widetilde{R}|\leq Ch^2(\|u^{(1)}\|_{3, \infty}+\|u^{(2)}\|_{3, \infty}).
\end{split}
\end{equation}
}
\end{lemma}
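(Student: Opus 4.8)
The plan is to construct the auxiliary functions $\hat{u}^{(1)}, \hat{u}^{(2)}, \hat{p}$ explicitly as solutions of an auxiliary continuous Stokes-type problem whose sole purpose is to cancel the leading second-order error terms produced by the finite difference stencils at the regular interior grid points. First I would Taylor-expand each discrete operator acting on the \emph{exact} solution. Writing $v_{l,m}=v(x_l,y_m)$ for a smooth function, the standard expansions give
\begin{equation*}
-\Delta_h u^{(1)}_{i,j-\frac12}
= -\Delta u^{(1)}_{i,j-\frac12}
-\frac{h^2}{12}\bigl(u^{(1)}_{xxxx}+u^{(1)}_{yyyy}\bigr)_{i,j-\frac12}
+\mathcal{O}(h^4),
\end{equation*}
and similarly $\delta_{h,1}^{+}p_{i-\frac12,j-\frac12}=p_x+\tfrac{h^2}{24}p_{xxx}+\mathcal{O}(h^4)$ when centered about $(x_i,y_{j-\frac12})$. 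Collecting the $\mathcal{O}(h^2)$ terms on each of the three equations in \eqref{regerror} yields explicit smooth functions, call them $F^{(1)},F^{(2)},G$, that depend only on high-order derivatives of $u^{(1)},u^{(2)},p$. I would then define $\hat{u}^{(1)},\hat{u}^{(2)},\hat{p}$ to solve the continuous Stokes system $-\Delta\hat{\vect u}+\nabla\hat p=(F^{(1)},F^{(2)})$, $\nabla\cdot\hat{\vect u}=G$ on each of $\Omega^{+}$ and $\Omega^{-}$ separately, with zero jump and homogeneous boundary data. Because the source terms are bounded in terms of the analytic solution, the regularity estimate (the Lemma of Shibata–Wang type quoted earlier) bounds $\hat{u}^{(1)},\hat{u}^{(2)},\hat p$ and their derivatives by norms of $u^{(1)},u^{(2)},p$, which is exactly the claimed control of the correction functions.

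Next I would verify the three discrete identities \eqref{high} together with the stated bounds on $\widetilde R^{(1)},\widetilde R^{(2)},\widetilde R$. The key point is that by construction of $\hat u,\hat p$, applying the discrete operators to $\widetilde u^{(1)}=u^{(1)}+h^2\hat u^{(1)}$ (and likewise for the others) cancels the $\mathcal{O}(h^2)$ term in the raw truncation error of \eqref{regerror}: the $h^2F^{(1)}$ coming from the $\mathcal O(h^2)$ expansion of $-\Delta_h u^{(1)}+\delta_{h,1}^+ p$ is exactly offset by $-\Delta\hat u^{(1)}+\hat p_x=F^{(1)}$ applied to the added $h^2\hat u^{(1)}$, up to a further $\mathcal O(h^4)$ from re-expanding the discrete operators on $\hat u$. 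Thus at \emph{regular interior} nodes the remainder is genuinely $\mathcal{O}(h^4)$, hence trivially bounded by $Ch^2$. The boundary conditions \eqref{bdh} follow from Taylor-expanding the homogeneous Dirichlet data: since $u^{(1)}$ vanishes on $\partial\Omega$, the ghost-point reflection introduces only an $\mathcal{O}(h^4)$ discrepancy, and $\hat u$ inherits the homogeneous boundary values so the $h^2\hat u$ correction preserves this.

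The main obstacle is the treatment of the \emph{irregular} nodes near $\Gamma$, where the smooth Taylor expansion underlying $F^{(1)},F^{(2)},G$ is invalid because the stencil straddles the interface. Here I would rely on the correction terms derived in Section~\ref{sec; correction}: the modified right-hand sides $\tilde f^{(1)},\tilde f^{(2)},\tilde g$ already subtract the one-sided jump contributions $[\![u^{(1)}]\!]+\xi[\![u^{(1)}_x]\!]+\tfrac12\xi^2[\![u^{(1)}_{xx}]\!]$, etc., so the difference $-\Delta_h u^{(1)}+\delta_{h,1}^+p-\tilde f^{(1)}$ at an irregular node equals the \emph{remainder} of the interfacial Taylor expansion, which is $\mathcal{O}(h^{-2}\cdot h^3)=\mathcal{O}(h)$ for the momentum equations (only quadratic jump terms are retained) and $\mathcal{O}(h^2)$ for the divergence equation (as noted after the correction terms). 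The delicate step is arranging that, after adding the $h^2\hat u,h^2\hat p$ perturbation, the \emph{combined} remainder at irregular nodes is still $\mathcal{O}(h^2)$: since $\hat u,\hat p$ are smooth up to $\Gamma$ from each side and the correction terms are exact through the quadratic jump, one argues that the leading $\mathcal O(h)$ interfacial error in the momentum equations is in fact of the form $h\cdot(\text{third-order jump})$, and a further term in the definition of the correction (or a local redefinition of $\hat u$ in the $\mathcal{O}(h)$-neighborhood of $\Gamma$ containing $O(1)$-many nodes per grid line) upgrades it to $\mathcal O(h^2)$. I expect this interfacial bookkeeping — reconciling the smooth global construction of $\hat u,\hat p$ with the sharp one-sided corrections at the $\mathcal O(1/h)$ irregular nodes — to be the technically hardest part, and it is precisely the novelty the introduction flags as the first extension of Mayo's auxiliary-function idea to interface problems.
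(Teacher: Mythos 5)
There is a genuine gap --- in fact two --- and both stem from the same misidentification: your construction aims the $h^2\hat{u},h^2\hat{p}$ perturbation at the $\mathcal{O}(h^2)$ truncation errors at \emph{regular interior} nodes, which are harmless (the lemma only needs $\mathcal{O}(h^2)$ remainders there, and the raw MAC truncation error already is $\mathcal{O}(h^2)$; the paper simply absorbs the $-\tfrac{h^2}{12}(u^{(1)}_{xxxx}+u^{(1)}_{yyyy})$-type terms into $\widetilde{R}$). The two places where cancellation is actually needed --- the boundary and the interface --- are exactly where your construction does nothing. First, the boundary: your claim that ``since $u^{(1)}$ vanishes on $\partial\Omega$, the ghost-point reflection introduces only an $\mathcal{O}(h^4)$ discrepancy'' is false. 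Taylor expansion gives
\begin{equation*}
u^{(1)}(x,-\tfrac{h}{2})+u^{(1)}(x,\tfrac{h}{2})
=2u^{(1)}(x,0)+\tfrac{h^2}{4}\,\partial_{yy}u^{(1)}(x,0)+\mathcal{O}(h^4)
=\tfrac{h^2}{4}\,\partial_{yy}u^{(1)}(x,0)+\mathcal{O}(h^4),
\end{equation*}
which is only $\mathcal{O}(h^2)$; after division by $h^2$ inside $\Delta_h$ this produces an $\mathcal{O}(1)$ truncation error at the first interior row of nodes. This is precisely why the paper gives $\hat{\vect{u}}$ the \emph{nonhomogeneous} boundary value $\hat{\vect{u}}=-\tfrac18\Delta\vect{u}$ on $\partial\Omega$ (equivalently $\hat{u}^{(1)}_{i,0}=-\tfrac18\partial_{yy}u^{(1)}_{i,0}$, using that tangential derivatives vanish), so that $2h^2\hat{u}^{(1)}_{i,0}$ cancels the $\tfrac{h^2}{4}\partial_{yy}u^{(1)}_{i,0}$ term and \eqref{bdh} holds to $\mathcal{O}(h^4)$. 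With your homogeneous data for $\hat{u}$, \eqref{bdh} fails and the bound on $\widetilde{R}^{(1)}$ fails near the boundary.

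Second, the interface: the leading $\mathcal{O}(h)$ residuals at irregular nodes are cubic-jump terms such as $h^{-2}\tfrac16\xi_{u^{(1)}}^3[\![u^{(1)}_{xxx}]\!]$ and $h^{-1}\tfrac12\xi_p^2[\![p_{xx}]\!]$, and the paper's mechanism for killing them is to prescribe \emph{nonzero, $\mathcal{O}(1)$ jumps} for the auxiliary functions across $\Gamma$, namely $[\![\hat{u}^{(1)}_x]\!]=-\tfrac{\xi_{u^{(1)}}^2}{6h^2}[\![u^{(1)}_{xxx}]\!]$, $[\![\hat{p}]\!]=-\tfrac{\xi_p^2}{2h^2}[\![p_{xx}]\!]$, etc.\ (see \eqref{hr}); these are bounded because $\xi=\mathcal{O}(h)$, and the factor $h^2$ in $h^2\hat{u}$ combined with the $h^{-2}$ of the stencil makes the jump of $\hat{u}$ appear at exactly the order needed to cancel the cubic-jump residuals. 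Your $\hat{u},\hat{p}$ are constructed with \emph{zero} jumps, so the perturbation contributes nothing at leading order to the interfacial residual, and the node-wise remainder stays at $\mathcal{O}(h)$ on the $\mathcal{O}(1/h)$ irregular nodes --- which, note, is not even recoverable in $\ell^2$ (it yields only $\mathcal{O}(h^{3/2})$ for the scheme's error, not $\mathcal{O}(h^2)$), so this cancellation is essential rather than cosmetic. Your two suggested fixes do not close this: ``a further term in the definition of the correction'' would alter the scheme \eqref{MAC}, which the lemma must take as given, and the ``local redefinition of $\hat{u}$ near $\Gamma$'' is indeed the right idea --- it is exactly the jump conditions \eqref{hr} --- but it is asserted, not constructed. (A further minor point: your inhomogeneous divergence constraint $\nabla\cdot\hat{\vect{u}}=G$ with homogeneous Dirichlet data requires the compatibility condition $\int_\Omega G=0$, which is not checked; the paper avoids this entirely since its auxiliary system is the homogeneous Stokes interface problem, whose well-posedness with the prescribed jumps and boundary data follows as in Section 2.)
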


\begin{figure}[ht!]
\centering
\includegraphics[width=0.36\textwidth]{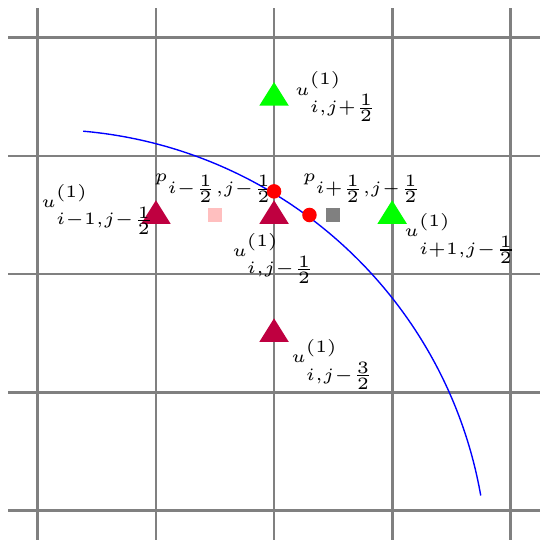}
\setlength{\abovecaptionskip}{-0.0cm}
\setlength{\belowcaptionskip}{-0.4cm} 
\caption{A diagram of the interface cutting through a staggered grid 
around an irregular point}
\label{AccuracyProof}
\end{figure}

\begin{proof}
{\rm The proof is proceeded  in the manner of Strang \cite{MR0166942} and 
Hou \cite{MR1163348, MR1220643}.  Here, only \eqref{h1} is proved in details 
and the proof can be easily extended to \eqref{h2} and \eqref{h3}.  
 
Assume the  grid points $(x_i, y_{j-\frac{1}{2}}), \,(x_{i-1}, y_{j-\frac{1}{2}}),
\, (x_i, y_{j-\frac{3}{2}}),\, (x_{i-\frac{1}{2}},y_{j-\frac{1}{2}})$ locate in the domain 
$\Omega^+$, whereas, $(x_{i+1}, y_{j-\frac{1}{2}}),\, (x_i, y_{j+\frac{1}{2}}),
\, (x_{i+\frac{1}{2}},y_{j-\frac{1}{2}})$ locate in the domain $\Omega^-$.
This is to say, the interface $\Gamma$ intersects the line segment connecting  
grid nodes $(x_i, y_{j-\frac{1}{2}})$ and $(x_{i+\frac{1}{2}}, y_{j-\frac{1}{2}})$ 
at  point $(x^*, y_{j-\frac{1}{2}})$ and intersects the line segment connecting 
grid nodes $(x_i, y_{j-\frac{1}{2}})$ and $(x_i, y_{j+\frac{1}{2}})$ at point 
$(x_i, y^*)$. See Fig. \ref{AccuracyProof} for illustration. 
The left hand of equation \eqref{h1} can be rewritten as
\begin{equation*}
\begin{split}
-\Delta_h\widetilde{u}^{(1)}_{i, j-\frac{1}{2}}+\delta_{h, 1}^+\,
\widetilde{p}_{i-\frac{1}{2}, j-\frac{1}{2}}
=&-\Delta_h\widetilde{u}^{(1)+}_{i, j-\frac{1}{2}}+\delta_{h,1}^+\,
\widetilde{p}^+_{i-\frac{1}{2}, j-\frac{1}{2}}\\[4pt]
&+h^{-2}\left[
\Big(\widetilde{u}^{(1)+}_{i+1, j-\frac{1}{2}}-
\widetilde{u}^{(1)-}_{i+1, j-\frac{1}{2}}\Big)
+\Big(\widetilde{u}^{(1)+}_{i,j+\frac{1}{2}}-
\widetilde{u}^{(1)-}_{i,j+\frac{1}{2}}\Big)
\right]\\[4pt]
&- h^{-1}\Big(\widetilde{p}^+_{i+\frac{1}{2}, j-\frac{1}{2}}
-\widetilde{p}^-_{i+\frac{1}{2}, j-\frac{1}{2}}\Big),
\end{split}
\end{equation*}
where the superscripts $``+"$ and $``-"$ represent the values that are 
one-side limits of the functions from $\Omega^+$ and $\Omega^-$ respectively. 

Expanding the finite differences $\Delta_h\widetilde{u}^{(1)+}_{i, j-\frac{1}{2}}$ and $\delta_{h,1}^+\,\widetilde{p}^+_{i-\frac{1}{2}, j-\frac{1}{2}}$ in Taylor series around the grid node $(x_i, y_{j-\frac{1}{2}})$, one obtains
\begin{equation}
\label{ceq1}
\begin{split}
&-\Delta_{h}\widetilde{u}^{(1)+}_{i, j-\frac{1}{2}}+\delta_{h,1}^+
\,\widetilde{p}^+_{i-\frac{1}{2}, j-\frac{1}{2}}\\[4pt]
&=-h^{-2}
\left(h^2\partial_{xx} u^{(1)+}_{i,j-\frac{1}{2}}
     +h^4\partial_{xx} \hat{u}^{(1)+}_{i,j-\frac{1}{2}}
     +\mathcal{O}(h^4\|u^{(1)}\|_{4, \infty})\right)\\[4pt]
&\quad-h^{-2} 
\left(h^2\partial_{yy} u^{(1)+}_{i,j-\frac{1}{2}}
     +h^4\partial_{yy} \hat{u}^{(1)+}_{i,j-\frac{1}{2}}     
     +\mathcal{O}(h^4\|u^{(1)}\|_{4, \infty})\right)\\[4pt]
&\quad+h^{-1}
\left(h\partial_xp^+_{i-\frac{1}{2}, j-\frac{1}{2}}             
      +h^2\partial_x \hat{p}^+_{i-\frac{1}{2}, j-\frac{1}{2}}      
      +\mathcal{O}(h^3\|p\|_{3, \infty})\right)\\[4pt]
&=\Big(-\partial_{xx} u^{(1)+}_{i,j-\frac{1}{2}}
           -\partial_{yy} u^{(1)+}_{i,j-\frac{1}{2}}           
           +\partial_x p^+_{i-\frac{1}{2}, j-\frac{1}{2}}\Big)
+h^2
\Big(-\partial_{xx} \hat{u}^{(1)+}_{i,j-\frac{1}{2}}       
       -\partial_{yy} \hat{u}^{(1)+}_{i,j-\frac{1}{2}}
       +\partial_x \hat{p}^+_{i-\frac{1}{2}, j-\frac{1}{2}}\Big)\\[4pt]
 &\quad +\mathcal{O}\big(h^2(\|u^{(1)}\|_{4, \infty}+\|p\|_{3, \infty})\big).
\end{split}
\end{equation}
Making Taylor expansion for $\widetilde{u}^{(1)\pm}_{i+1, j-\frac{1}{2}}$ 
around the point $(x^*, y_{j-\frac{1}{2}})$  and 
$\widetilde{u}^{(1)\pm}_{i,j+\frac{1}{2}}$ around the point $(x_i, y^*)$, 
one  arrives  at
\begin{equation}
\label{ceq2}
\begin{split}
&h^{-2}\left[(\widetilde{u}^{(1)+}_{i+1, j-\frac{1}{2}}
                 -\widetilde{u}^{(1)-}_{i+1, j-\frac{1}{2}})
                +(\widetilde{u}^{(1)+}_{i,j+\frac{1}{2}}
                -\widetilde{u}^{(1)-}_{i,j+\frac{1}{2}})\right]\\[4pt]
&=h^{-2}\Big([\![ u^{(1)}]\!]
		  +\xi_{u^{(1)}}[\![ u^{(1)}_x]\!]
                   +\frac{1}{2}\xi_{u^{(1)}}^2[\![ u^{(1)}_{xx}]\!]
                   +\frac{1}{6}\xi_{u^{(1)}}^3[\![ u^{(1)}_{xxx}]\!]
                   +h^2([\![ \hat{u}^{(1)}]\!]
                   +\xi_{u^{(1)}}[\![ \hat{u}^{(1)}_x]\!])   
                   \Big)\\[4pt]
&+h^{-2}
\Big([\![ u^{(1)}]\!]+\eta_{u^{(1)}}[\![ u^{(1)}_y]\!]
      +\frac{1}{2}\eta_{u^{(1)}}^2[\![ u^{(1)}_{yy}]\!]
      +\frac{1}{6}\eta_{u^{(1)}}^3[\![ u^{(1)}_{yyy}]\!]
      +h^2([\![ \hat{u}^{(1)}]\!]
      +\eta_{u^{(1)}}[\![ \hat{u}^{(1)}_y]\!])\Big)\\[4pt]
&+\mathcal{O}(h^2\|u^{(1)}\|_{4, \infty}),
\end{split}
\end{equation}
with $\xi_{u^{(1)}}=x_{i+1}-x^*$ and $\eta_{u^{(1)}}=y_{j+\frac{1}{2}}-y^*$. 
Similarly, making 
Taylor expansions for $\widetilde{p}^{\pm}_{i+\frac{1}{2}, j-\frac{1}{2}}$ 
around the point $(x^*, y_{j-\frac{1}{2}})$, one gets
\begin{equation}
\label{ceq3}
\begin{split}
h^{-1}(\widetilde{p}^+_{i+\frac{1}{2}, j-\frac{1}{2}}
-\widetilde{p}^-_{i+\frac{1}{2}, j-\frac{1}{2}})
=h^{-1}\left([\![ p]\!]+\xi_{p}[\![ p_x]\!]
+\frac{1}{2}\xi_{p}^2[\![ p_{xx}]\!]
+h^2[\![ \hat{p}]\!]+\mathcal{O}(h^3\|p\|_{3, \infty})\right),
\end{split}
\end{equation}
with $\xi_{p} = x_{i+\frac{1}{2}}-x^*$.
Adding \eqref{ceq1}-\eqref{ceq3}  results in
\begin{equation*}
\begin{split}
-\Delta_h\widetilde{u}^{(1)}_{i, j-\frac{1}{2}}+\delta_{h, 1}^+\,
\widetilde{p}_{i-\frac{1}{2}, j-\frac{1}{2}}
&=\tilde{f}_{i,j-\frac{1}{2}}^{(1)}+\mathcal{O}\big(h^2(\|u^{(1)}\|_{4, \infty}+\|p\|_{3, \infty})\big)\\[4pt]
&+h^{2}\Big(
-\partial_{xx} \hat{u}^{(1)+}_{i,j-\frac{1}{2}}       
       -\partial_{yy} \hat{u}^{(1)+}_{i,j-\frac{1}{2}}
       +\partial_x \hat{p}^+_{i-\frac{1}{2}, j-\frac{1}{2}}
\Big)-h^{-1}\Big(
\frac{1}{2}\xi_{p}^2[\![ p_{xx}]\!]
+h^2[\![ \hat{p}]\!]\Big)\\[4pt]
&+h^{-2}\Big(
\frac{1}{6}\xi_{u^{(1)}}^3[\![ u^{(1)}_{xxx}]\!]
                   +h^2\xi_{u^{(1)}}[\![ \hat{u}^{(1)}_x]\!]+h^2[\![ \hat{u}^{(1)}]\!]\Big)\\[4pt]
&+  h^{-2}\Big(                 
                   \frac{1}{6}\eta_{u^{(1)}}^3[\![ u^{(1)}_{yyy}]\!]
      +h^2\eta_{u^{(1)}}[\![ \hat{u}^{(1)}_y]\!]+h^2[\![ \hat{u}^{(1)}]\!]
\Big).
\end{split}
\end{equation*}

To obtain the second order convergence, set the coefficients of powers of $h$ in the last four terms of the above equation as zero. Then one derives that $(\hat{u}^{(1)},\hat{p})$ should satisfy the following equation 
\begin{equation*}
-\Delta \hat{u}^{(1)}+\partial_x\hat{p}=0, \; \hbox{in}\;\Omega,
\end{equation*}
with the jump conditions
\begin{equation*}
\begin{split}
&[\![\hat{u}^{(1)}]\!]=0,\qquad\qquad\qquad
[\![\hat{u}^{(1)}_x]\!]=-\dfrac{1}{6h^2}\xi_{u^{(1)}}^2[\![ u_{xxx}]\!],\\[4pt]
& [\![\hat{p}]\!]=-\dfrac{1}{2h^2}\xi_{p}^2[\![ p_{xx}]\!],\quad\quad
[\![\hat{u}^{(1)}_y]\!]=-\dfrac{1}{6h^2}\eta_{u^{(1)}}^2[\![ u_{yyy}]\!],
\end{split} \qquad\;\hbox{on}\;\Gamma.
\end{equation*}

Now consider the boundary conditions.
Expanding $\widetilde{u}^{(1)}_{i, \pm\frac{1}{2}}$ at grid point $(x_i, 0)$, 
one has
\begin{align*}
&\widetilde{u}^{(1)}_{i,-\frac{1}{2}}=u^{(1)}_{i,0}-\frac{h}{2}\partial_x u^{(1)}_{i,0}
+\frac{h^2}{8}\partial_{xx} u^{(1)}_{i,0}-\frac{h^3}{48}\partial_{xxx} u^{(1)}_{i,0}
+h^2\left(\hat{u}^{(1)}_{i,0}-\frac{h}{2}\partial_x \hat{u}^{(1)}_{i,0}\right)
+\mathcal{O}(h^4\|u^{(1)}\|_{4, \infty}),\\[4pt]
&\widetilde{u}^{(1)}_{i,\frac{1}{2}}=u^{(1)}_{i,0}+\frac{h}{2}\partial_x u^{(1)}_{i,0}
+\frac{h^2}{8}\partial_{xx} u^{(1)}_{i,0}+\frac{h^3}{48}\partial_{xxx} u^{(1)}_{i,0}
+h^2\left(\hat{u}^{(1)}_{i,0}+\frac{h}{2}\partial_x \hat{u}^{(1)}_{i,0}\right)
+\mathcal{O}(h^4\|u^{(1)}\|_{4, \infty}).
\end{align*} 
Thus 
\begin{equation}
\widetilde{u}^{(1)}_{i,-\frac{1}{2}}+\widetilde{u}^{(1)}_{i, \frac{1}{2}}
=2u^{(1)}_{i,0}+\frac{h^2}{4}\partial_{xx} u^{(1)}_{i,0}
+2h^2\hat{u}^{(1)}_{i,0}+\mathcal{O}(h^4\|u^{(1)}\|_{4, \infty}).
\end{equation}
Recalling that $u^{(1)}_{i,0}=0$, one gets
\begin{equation}
\tilde{u}^{(1)}_{i,-\frac{1}{2}}=-\tilde{u}^{(1)}_{i, \frac{1}{2}}
+\mathcal{O}(h^4\|u^{(1)}\|_{4, \infty}),
\end{equation}
with $\hat{u}^{(1)}_{i,0}=-\dfrac{1}{8}\partial_{xx}u^{(1)}_{i,0}$. 

The proof for other equations in \eqref{bdh} can be obtained similarly, which is omitted here.

The above proof mainly focuses on the irregular grid nodes 
$(x_i, y_{j-\frac{1}{2}})\in \Omega^+$, and the results at other regular and irregular grid
nodes can be derived similarly.  
The proof is completed. }
\end{proof}

\begin{remark}
Based on the above proof, one can find the functions 
$\hat{u}^{(1)}, \hat{u}^{(2)}, \hat{p}$ satisfy
\begin{equation*}
\begin{split}
-\Delta  \hat{\vect   u}+\nabla \hat{p}=0&,\; \; {\rm on}\;\Omega,\\[4pt]
\nabla\cdot\hat{\vect   u}=0&,\; \; {\rm on}\;\Omega,\\[4pt]
\hat{\vect   u}=-\frac{1}{8}\Delta \vect   u&, \;\; {\rm on}\;\partial\Omega,
\end{split}
\end{equation*}
with jump conditions
\begin{equation}
\label{hr}
\begin{split}
&[\![ \hat{\vect   u}]\!] =\vect 0, \qquad \;\;
[\![ \hat{p}]\!] = l_0(x)[\![ p_{xx}]\!]
+l_0(y)[\![ p_{yy}]\!],\\[4pt]
&[\![ \hat{u}^{(1)}_x]\!] 
= l_1(x)[\![ u^{(1)}_{xxx}]\!],\quad\,
[\![ \hat{u}^{(1)}_y]\!] 
= l_1(y)[\![ u^{(1)}_{yyy}]\!],     \qquad \hbox{on}\; \Gamma,\\[4pt]
&[\![ \hat{u}^{(2)}_x]\!] 
= l_2(x)[\![ u^{(2)}_{xxx}]\!],\quad\;
[\![ \hat{u}^{(2)}_y]\!] = l_2(y)[\![ u^{(2)}_{yyy}]\!], 
\end{split}
\end{equation}
where $|l_i|\leq 1, i=0,1,2$ and they only involve the location of interface 
$\Gamma$.
Note that the jump conditions of  high-order 
derivatives on the right-hand side of \eqref{hr} exist and can 
be obtained using the similar way in section \ref{sec; jump}.
\end{remark}

For brevity, define the error functions 
\begin{equation*}
\begin{split}
&\widetilde{e}_u^{(1)}=u^{(1)}_h-\widetilde{u}^{(1)} \in V_h^{(1)},\quad
\widetilde{e}_u^{(2)}=u^{(2)}_h-\widetilde{u}^{(2)} \in V_h^{(2)},\quad
\widetilde{e}_p=p_h-\widetilde{p}\in M_h.
\end{split}
\end{equation*}
It is easy to verify that $(\widetilde{e}_u^{(1)},
\widetilde{e}_u^{(2)}, \widetilde{e}_p)$ satisfy the following error equations 
\begin{equation}
\label{error}
\begin{split}
-\Delta_h\widetilde{e}_u^{(1)}+\delta_{h,1}^+\,\widetilde{e}_p
&= \widetilde{R}_u^{(1)}, \;\;\;\;\hbox{in} \;V_h^{(1)}, \\
-\Delta_{h}\widetilde{e}_u^{(2)}+\delta_{h, 2}^+\,\widetilde{e}_p
&=\widetilde{R}_u^{(2)},\; \;\;\;\hbox{in} \;V_h^{(2)},\\
\delta_{h,1}^-\,\widetilde{e}_u^{(1)}+\delta_{h,2}^-\,\widetilde{e}_u^{(2)}
&=\widetilde{R},\;\;\; \;\,\;\;\;\hbox{in} \;M_h.
\end{split}
\end{equation}

It is  pointed out that the truncation errors in \eqref{error}  are of second-order 
accuracy at all the computational points. The optimal second-order 
convergence in $\ell^2$-norms comes straightforwardly.

\begin{theorem}
\label{Ihmaintheo}
{\em Suppose that the analytical solutions $(u^{(1)}, u^{(2)}, p)$ are sufficiently 
smooth on $\Omega$ excluding $\Gamma$, 
$(\widetilde{u}^{(1)}, \widetilde{u}^{(2)}, \widetilde{p})$ are defined 
by \eqref{diuvp} in Lemma \ref{consistence}.
There exists a positive constant $C$ independent of $h$ such that it holds the following 
discrete $H^1$-error estimate
\begin{equation}
\label{nablaL2}
\begin{split}
&|\widetilde{\vect   e}_u|_1
\leq Ch^2(\|\vect   u\|_{4, \infty}+\|p\|_{3, \infty}),
\end{split}
\end{equation}
and the following discrete $\ell^2$-error estimates
\begin{subequations}
\begin{align}
\|\widetilde{\vect   e}_u\|\leq Ch^2(\|\vect   u\|_{4, \infty}+\|p\|_{3, \infty}),\label{uvL2}\\[4pt]
\|\widetilde{e}_p\|_{M_h}\leq Ch^2(\|\vect   u\|_{4, \infty}+\|p\|_{3, \infty}),\label{pL2}
\end{align}
\end{subequations}
with $\widetilde{\vect   e}_u=(\widetilde{e}_u^{(1)}, \widetilde{e}_u^{(2)})$.}
\end{theorem}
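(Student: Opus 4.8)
The plan is to treat the error system \eqref{error} as a perturbed discrete Stokes saddle-point problem whose right-hand sides $\widetilde{R}_u^{(1)},\widetilde{R}_u^{(2)},\widetilde{R}$ are, by Lemma \ref{consistence}, uniformly of size $\mathcal{O}(h^2)$ in the relevant discrete $\ell^2$-norms. Indeed the pointwise bounds in that lemma yield $\|\widetilde{R}_u^{(1)}\|_{V_h^{(1)}}, \|\widetilde{R}_u^{(2)}\|_{V_h^{(2)}}, \|\widetilde{R}\|_{M_h} \le Ch^2(\|\vect u\|_{4,\infty}+\|p\|_{3,\infty})$ since $\Omega$ has finite area. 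Because the auxiliary functions have removed the $\mathcal{O}(1)$ boundary and $\mathcal{O}(h)$ interface contributions, the theorem reduces to a standard discrete stability argument assembled from the three structural lemmas: the discrete Green's formulae (Lemma \ref{dGf}), the discrete Poincar\'e inequalities (Lemma \ref{disPoin}), and the discrete LBB condition (Lemma \ref{LBB}).

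First I would derive an energy identity for the velocity-gradient error. Pairing the first equation of \eqref{error} with $\widetilde{e}_u^{(1)}$ in $(\cdot,\cdot)_{V_h^{(1)}}$ and the second with $\widetilde{e}_u^{(2)}$ in $(\cdot,\cdot)_{V_h^{(2)}}$, the Laplacian terms reproduce exactly $|\widetilde{\vect e}_u|_1^2$ after the discrete Green's formulae of Lemma \ref{dGf}. The two pressure-gradient terms combine, again by Green's formula, into $-(\widetilde{e}_p,\delta_{h,1}^-\widetilde{e}_u^{(1)}+\delta_{h,2}^-\widetilde{e}_u^{(2)})_{M_h}$, which by the divergence equation of \eqref{error} equals $-(\widetilde{e}_p,\widetilde{R})_{M_h}$. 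This gives $|\widetilde{\vect e}_u|_1^2 = (\widetilde{R}_u^{(1)},\widetilde{e}_u^{(1)})_{V_h^{(1)}}+(\widetilde{R}_u^{(2)},\widetilde{e}_u^{(2)})_{V_h^{(2)}}+(\widetilde{e}_p,\widetilde{R})_{M_h}$. Bounding the first two terms by Cauchy--Schwarz and the Poincar\'e inequality of Lemma \ref{disPoin} (so that $\|\widetilde{e}_u^{(k)}\|_{V_h^{(k)}}\le C|\widetilde{\vect e}_u|_1$) leaves $|\widetilde{\vect e}_u|_1^2 \le Ch^2(\cdots)\,|\widetilde{\vect e}_u|_1 + \|\widetilde{e}_p\|_{M_h}\,\|\widetilde{R}\|_{M_h}$. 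The obstruction is now visible: the pressure error is still coupled in.

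To decouple the pressure, I would invoke the discrete LBB condition. By Lemma \ref{LBB}, $\beta\|\widetilde{e}_p\|_{M_h} \le \sup_{\vect v_h}\, b_h(\vect v_h,\widetilde{e}_p)/|\vect v_h|_1$. Rewriting $b_h(\vect v_h,\widetilde{e}_p)$ through Green's formula as $(\delta_{h,1}^+\widetilde{e}_p,v_h^{(1)})_{V_h^{(1)}}+(\delta_{h,2}^+\widetilde{e}_p,v_h^{(2)})_{V_h^{(2)}}$ and substituting the two momentum equations of \eqref{error} converts it into $(\widetilde{R}_u^{(1)}+\Delta_h\widetilde{e}_u^{(1)},v_h^{(1)})_{V_h^{(1)}}+(\widetilde{R}_u^{(2)}+\Delta_h\widetilde{e}_u^{(2)},v_h^{(2)})_{V_h^{(2)}}$. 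Applying Green's formula once more to the $\Delta_h$ terms and using Cauchy--Schwarz bounds the whole expression by $C(|\widetilde{\vect e}_u|_1 + h^2(\cdots))\,|\vect v_h|_1$, whence $\|\widetilde{e}_p\|_{M_h} \le (C/\beta)(|\widetilde{\vect e}_u|_1 + h^2(\cdots))$.

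Finally I would combine the two inequalities. Substituting the LBB bound for $\|\widetilde{e}_p\|_{M_h}$ into the energy identity and applying Young's inequality to absorb the emergent $|\widetilde{\vect e}_u|_1$ contributions reduces matters to $|\widetilde{\vect e}_u|_1^2 \le Ch^2(\cdots)|\widetilde{\vect e}_u|_1 + Ch^4(\cdots)$, from which \eqref{nablaL2} follows by the elementary quadratic estimate. Feeding this back into the LBB bound yields the pressure estimate \eqref{pL2}, and one further application of the Poincar\'e inequality of Lemma \ref{disPoin} promotes the gradient bound to the velocity $\ell^2$-estimate \eqref{uvL2}. I expect the only genuinely delicate point to be this velocity--pressure coupling, namely verifying that the constants from the energy step and the LBB step combine with the $h$-independent $\beta$ so that the absorption actually closes, since Lemma \ref{consistence} has already eliminated every source of order reduction at the interface and the boundary.
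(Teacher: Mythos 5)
Your proposal is correct and follows essentially the same route as the paper's own proof: both derive the energy identity $|\widetilde{\vect e}_u|_1^2=(\widetilde{R}_u^{(1)},\widetilde{e}_u^{(1)})_{V_h^{(1)}}+(\widetilde{R}_u^{(2)},\widetilde{e}_u^{(2)})_{V_h^{(2)}}+(\widetilde{e}_p,\widetilde{R})_{M_h}$ via the discrete Green's formulae, bound $\|\widetilde{e}_p\|_{M_h}$ by $C(|\widetilde{\vect e}_u|_1+h^2(\cdots))$ through the discrete LBB condition applied to the momentum error equations, and close the velocity--pressure coupling with Young's inequality before recovering \eqref{uvL2} from the discrete Poincar\'e inequality and \eqref{pL2} from the LBB bound. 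The only difference is cosmetic ordering (you do the energy identity before the LBB pressure bound, the paper does the reverse), which changes nothing in substance.
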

\begin{proof} {\em 
Computing the discrete inner-product of 
\eqref{h1} and \eqref{h2}  with the discrete function $h^2e_v^{(1)}\in V_h^{(1)}$ 
and $h^2e_v^{(2)}\in V_h^{(2)}$, then adding the results and using the discrete 
Green formulae in Lemma \ref{dGf}, one obtains
\begin{align}
\big(\delta_{h,1}^-\,\widetilde{e}_u^{(1)}, 
&\delta_{h,1}^-\,e_v^{(1)}\big)_{M_h}
+\big(\delta_{h,2}^-\,\widetilde{e}_u^{(1)}, 
\delta_{h,2}^-\,e_v^{(1)}\big)_{W_h^{(1)}}
+\big(\delta_{h,1}^-\,\widetilde{e}_u^{(2)},
\delta_{h, 1}^-\,e_v^{(2)}\big)_{W_h^{(2)}}
+\big(\delta_{h,2}^-\,\widetilde{e}_u^{(2)}, 
\delta_{h,2}^-\,e_v^{(2)}\big)_{M_h} \nonumber\\
&\quad-\big(\widetilde{e}_p, \delta_{h,1}^-\,e_v^{(1)}
+\delta_{h, 2}^-\,e_v^{(2)}\big)_{M_h}
=-\big[\big(\widetilde{R}_u, e_v^{(1)}\big)_{V_h^{(1)}}
+\big(\widetilde{R}_v, e_v^{(2)}\big)_{V_h^{(2)}}\big].  \label{Ieq}
\end{align}
By applying Cauchy-Schwarz inequality and discrete Poincare inequality 
in Lemma \ref{disPoin}, one derives
\begin{align}
\big(\widetilde{R}^{(1)}_u, e_v^{(1)}\big)_{V_h^{(1)}}
&\leq C_1\|\widetilde{R}_u^{(1)}\|_{V_h^{(1)}}\|e_v^{(1)}\|_{V_h^{(1)}}\nonumber\\
&\leq C_2\|\widetilde{R}_u^{(1)}\|_{V_h^{(1)}}
\Big(\|\delta_{h,1}^-\,e_v^{(1)}\|_{M_h}
+\|\delta_{h,2}^-\,e_v^{(1)}\|_{W_h^{(1)}}\Big), \label{leq1} \\
\big(\widetilde{R}_u^{(2)}, e_v^{(2)}\big)_{V_h^{(2)}}
&\leq C_3\|\widetilde{R}_u^{(2)}\|_{V_h^{(2)}}
\|e_v^{(2)}\|_{V_h^{(2)}} \nonumber\\
&\leq C_4\|\widetilde{R}_u^{(2)}\|_{V_h^{(2)}}
\Big(\|\delta_{h,1}^-\,e_v^{(2)}\|_{W_h^{(2)}}
+\|\delta_{h, 2}^-\,e_v^{(2)}\|_{M_h}\Big). \label{leq2}
\end{align}
Thus,  combining \eqref{Ieq}-\eqref{leq2}, one obtains
\begin{equation}
\label{leq4}
\begin{split}
&\big(\widetilde{e}_p, \delta_{h,1}^-\,e_v^{(1)}
+\delta_{h,2}^-\,e_v^{(2)}\big)_{M_h}\leq
C_5\left(|\widetilde{\vect   e}_u|_1
+\|\widetilde{R}^{(1)}_u\|_{V_h^{(1)}}
+\|\widetilde{R}_u^{(2)}\|_{V_h^{(2)}}\right)|\vect   e_v|_1.
\end{split}
\end{equation}
Using the discrete LBB condition in Lemma \ref{LBB} 
and inequality \eqref{leq4}, one gets
\begin{equation}
\label{Ihep}
\begin{split}
\|\widetilde{e}_p\|_{M_h} 
&\leq\sup\limits_{\vect   e_v\in\vect   V_h}\frac{\big|(\widetilde{e}_p, 
\delta_{h,1}^-\,e_v^{(1)}+\delta_{h,2}^-\,e_v^{(2)})_{M_h}\big|}
{|\vect   e_v|_1}\\[4pt]
&\leq C_{6}\left(|\widetilde{\vect   e}_u|_1
+\|\widetilde{R}_u^{(1)}\|_{V_h^{(1)}}
+\|\widetilde{R}_u^{(2)}\|_{V_h^{(2)}}\right)\\[4pt]
& \leq C_{7}\left(|\widetilde{\vect   e}_u|_1
+h^2(\| u^{(1)}\|_{4, \infty}+\|p\|_{3, \infty})
+h^2(\| u^{(2)}\|_{4, \infty}+\|p\|_{3, \infty})\right).
\end{split}
\end{equation}
 Setting $e_v^{(1)}=\widetilde{e}_u^{(1)}, e_v^{(2)}=\widetilde{e}_u^{(2)}$ 
 in \eqref{Ieq}, one has
 \begin{equation}
 \label{leqe}
 |\tilde{\vect   e}|_1^2
 =\big(\widetilde{e}_p, \delta_{h,1}^-\,\widetilde{e}_u^{(1)}
 +\delta_{h,2}^-\,\widetilde{e}_u^{(2)}\big)_{M_h}
 +\big(\widetilde{R}_u^{(1)}, \widetilde{e}_u^{(1)}\big)_{V_h^{(1)}}
 +\big(\widetilde{R}_u^{(2)}, \widetilde{e}_u^{(2)}\big)_{V_h^{(2)}}.
 \end{equation}
 Using the same technique, one derives
 \begin{equation}
  \label{leqe2}
 \begin{split}
\big(\widetilde{R}_u^{(1)}, \widetilde{e}_u^{(1)}\big)_{V_h^{(1)}}
+\big(\widetilde{R}_u^{(2)}, \widetilde{e}_u^{(2)}\big)_{V_h^{(2)}}
&\leq \frac{1}{4}|\widetilde{\vect   e}_u|_1^2
+C_{8}\left(\|\widetilde{R}_u^{(1)}\|_{V_h^{(1)}}
+\|\widetilde{R}_u^{(2)}\|_{V_h^{(2)}}\right)\\[4pt]
& \leq \frac{1}{4}| \widetilde{\vect   e}_u|_1^2
+C_{9}h^4\big(\|\vect   u\|_{4, \infty}+\|p\|_{3, \infty}\big)^2.
\end{split}
 \end{equation}
 Moreover, from \eqref{h3}, \eqref{leq4} and the discrete Green formulae 
 in Lemma \ref{dGf}, one gets
 \begin{equation}
  \label{leqe3}
 \begin{split}
\big(\widetilde{e}_p, \delta_{h,1}^-\,\tilde{e}_u^{(1)}
+\delta_{h,2}^-\,\widetilde{e}_u^{(2)}\big)_{M_h}
 &\leq C\|\widetilde{e}_p\|_{M_h}\|\widetilde{R}\|_{M_h}\\[4pt]
 &\leq \frac{1}{4}|\widetilde{\vect   e}_u|_1^2
 +C_{10}h^4\big(\|\vect   u\|_{4, \infty}+\|p\|_{3, \infty}\big)^2.
 \end{split}
 \end{equation}
 Therefore, combining \eqref{leqe},\eqref{leqe2} and \eqref{leqe3}, one 
 obtains
 \begin{equation}
 \label{Ihe}
 |\widetilde{\vect   e}_u|_1\leq  Ch^2(\|\vect   u\|_{4, \infty}+\|p\|_{3, \infty}).
 \end{equation}
 Then,  \eqref{uvL2} follows from the discrete Poincare inequality in 
 Lemma \ref{disPoin}. Furthermore,  \eqref{pL2}
 comes straightforwardly from \eqref{Ihep} and \eqref{Ihe}.}
\end{proof}

Denote the error functions 
\begin{equation*}
\begin{split}
&e_u^{(1)}=u^{(1)}_h-u^{(1)}\in V_h^{(1)},\quad
e_u^{(2)}=u^{(2)}_h-u^{(2)}\in V_h^{(2)},\quad
e_p=p_h-p\in M_h.
\end{split}
\end{equation*}
In terms of the definition of $\widetilde{\vect   e}_u, \widetilde{e}_p$, 
it is obvious that
\begin{equation*}
\begin{split}
\vect   e_u &=\widetilde{\vect   e}_u +  h^2 \hat{\vect   u},\qquad
e_p = \widetilde{e}_p + h^2 \hat{p}.
\end{split}
\end{equation*}
with $\vect   e_u=\big(e_u^{(1)}, e_u^{(2)}\big)$.
 The following $\ell^2$-analysis comes straightforwardly. 
\begin{theorem}
\label{maintheo}
{\em Suppose that the analytical solutions $(u^{(1)}, u^{(2)}, p)$ are sufficiently 
smooth on $\Omega$ excluding $\Gamma$,  $(u_h^{(1)}, u_h^{(2)}, p_h)$ 
are numerical solutions defined in \eqref{MAC}.  There exists a positive 
constant C independent of $h$ such that
\begin{equation*}
\begin{split}
|\vect   e_u|_1&\leq Ch^2(\|\vect   u\|_{4, \infty}+\|p\|_{3, \infty}),\\[4pt]
\|\vect   e_u\|&\leq Ch^2(\|\vect   u\|_{4, \infty}+\|p\|_{3, \infty}),\\[4pt]
\|e_p\|_{M_h}&\leq Ch^2(\|\vect   u\|_{4, \infty}+\|p\|_{3, \infty}).
\end{split}
\end{equation*}
}
\end{theorem}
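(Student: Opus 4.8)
The plan is to obtain Theorem \ref{maintheo} as an immediate corollary of Theorem \ref{Ihmaintheo}, using the algebraic decomposition recorded just above the statement,
$$\vect e_u = \widetilde{\vect e}_u + h^2\hat{\vect u}, \qquad e_p = \widetilde{e}_p + h^2\hat{p}.$$
The point is that the auxiliary functions of Lemma \ref{consistence} differ from the exact solution only through the $h^2$-scaled correctors $\hat{\vect u},\hat{p}$, so the true error $\vect e_u,e_p$ splits into a piece already controlled by the previous theorem and a genuinely second-order remainder. Thus no new energy estimate is needed; the work is purely to bound the discrete norms of the correctors.

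First I would apply the triangle inequality in each of the three discrete norms of the statement,
$$|\vect e_u|_1 \le |\widetilde{\vect e}_u|_1 + h^2\,|\hat{\vect u}|_1, \quad \|\vect e_u\| \le \|\widetilde{\vect e}_u\| + h^2\,\|\hat{\vect u}\|, \quad \|e_p\|_{M_h} \le \|\widetilde{e}_p\|_{M_h} + h^2\,\|\hat{p}\|_{M_h},$$
and bound each tilde term directly by Theorem \ref{Ihmaintheo}, which already delivers $Ch^2(\|\vect u\|_{4,\infty}+\|p\|_{3,\infty})$. It then remains to show that the corrector norms $|\hat{\vect u}|_1$, $\|\hat{\vect u}\|$ and $\|\hat{p}\|_{M_h}$ are $\mathcal{O}(1)$ uniformly in $h$, so that the $h^2$ prefactor supplies the claimed order. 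For the $\ell^2$-norms $\|\hat{\vect u}\|$ and $\|\hat{p}\|_{M_h}$ this is immediate: by Lemma \ref{consistence} the functions $\hat{u}^{(1)},\hat{u}^{(2)},\hat{p}$ and their derivatives are bounded in terms of the analytic solutions, so the $h^2$-weighted Riemann sums defining the discrete $\ell^2$-norms are controlled by the corresponding finite continuous $L^2$-norms, up to a constant.

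The one point requiring care — and the step I expect to be the main obstacle — is the seminorm $|\hat{\vect u}|_1$, which involves the backward difference quotients $\delta_{h,k}^-\hat{u}^{(m)}$. Across $\Gamma$ the corrector has jumps in its first derivatives (see the jump conditions for $[\![\hat{u}^{(m)}_x]\!]$, $[\![\hat{u}^{(m)}_y]\!]$ in the Remark), so at an irregular node a difference quotient straddles the interface and one might fear an $h^{-1}$ blow-up. The resolution is that the Remark also records $[\![\hat{\vect u}]\!]=\vect 0$, i.e. $\hat{\vect u}$ is \emph{continuous} across $\Gamma$. Performing a one-sided Taylor expansion of $\hat{u}^{(m)}_{l}$ and $\hat{u}^{(m)}_{l-1}$ about the interface intersection point, the continuity cancels the dangerous $h^{-1}[\![\hat{u}^{(m)}]\!]$ contribution, leaving $\delta_{h,k}^-\hat{u}^{(m)}=\mathcal{O}(1)$ (bounded one-sided derivatives with $\mathcal{O}(h)$ offsets divided by $h$) even at irregular nodes. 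Hence $|\hat{\vect u}|_1$ is uniformly bounded, and assembling the three inequalities completes the proof.
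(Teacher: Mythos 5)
Your proposal is correct and is essentially the paper's own argument: the paper records the decomposition $\vect e_u = \widetilde{\vect e}_u + h^2\hat{\vect u}$, $e_p = \widetilde{e}_p + h^2\hat{p}$ immediately before the theorem and treats the result as a direct corollary of Theorem \ref{Ihmaintheo} together with the boundedness (in terms of the analytic solution) of the correctors $\hat{\vect u},\hat{p}$ from Lemma \ref{consistence}, exactly as you do via the triangle inequality. Your extra verification that $|\hat{\vect u}|_1$ remains $\mathcal{O}(1)$ at irregular nodes, using the continuity $[\![\hat{\vect u}]\!]=\vect 0$ across $\Gamma$ to rule out an $h^{-1}$ blow-up of the difference quotients, is a detail the paper leaves implicit, so it fills in, rather than departs from, the same route.
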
  

It is remarked that the analysis is given for the two dimensional problem, 
but similar results can be obtained easily for three dimensional problems. 
Because of the size limitation, the detailed derivation is not illustrated here, 
but the numerical examples for three dimensional case is presented later, 
which indicates that the results are consistent with that in two dimension.

\section{Numerical Examples}
In this section, numerical results are presented to verify the theoretical 
analysis. To evaluate convergence rates, define the scaled discrete 
$\ell^2$-norms:
$$\|e_{\vect   u}\|=\frac{\|\vect   u_h-\vect   u\|}{\|\vect   u\|},\quad\;
\|e_{p}\|_{M_h}=\frac{\|p_h-p\|_{M_h}}{\|p\|_{M_h}},\quad\;
|e_{\vect   u}|_{1}=\frac{|\vect   u_h-\vect   u|_1}{|\vect   u|_1},$$
and  the scaled discrete maximum norms:  
$$\|e_{\vect   u}\|_{\infty}=\frac{\|\vect   u_h-\vect   u\|_{\infty}}{\|\vect   u\|_{\infty}},\quad\;
|e_{\vect   u}|_{1,\infty}=\frac{|\vect   u_h-\vect   u|_{1, \infty}}{|\vect   u|_{1, \infty}},$$
where $|\vect   v|_{1, \infty}=\max|\delta_{h, i}^{\pm}v^{(j)}|$ with $i,j =1, 2$.

{\em Example 1.} In this example,  the interface is a circle with $r=1$, 
which is located at the center of the box $\Omega=(-2, 2)^2$. 
The solution is given by 
\begin{equation*}
\begin{split}
u^{(1)}(x, y)&=\begin{cases} \dfrac{y}{r}-y+\dfrac{y}{4}, \; x^2+y^2>1,\\[6pt]
\dfrac{y}{4}(x^2+y^2),\;x^2+y^2\leq 1,
\end{cases}\\[4pt]
u^{(2)}(x, y)&=\begin{cases} -\dfrac{x}{r}+x-\dfrac{x}{4}(1-x^2), 
\; x^2+y^2>1,\\[4pt]
-\dfrac{xy^2}{4},\qquad\qquad\quad\;\;\;\;\; \,x^2+y^2\leq 1,
\end{cases}\\[4pt]
p(x, y)&=\begin{cases} (-\dfrac{3}{4}x^3+\dfrac{3}{8}x)y, 
\; x^2+y^2>1,\\[4pt]
5,\qquad\qquad\quad\;\;\;\, x^2+y^2\leq 1.
\end{cases}
\end{split}
\end{equation*}
\begin{table}[h]
\caption{$\ell^2$-errors and  its  convergence rates of {\em Example} 1. }
\begin{center}
\vspace{-0.1cm}
\begin{tabular}{|c|c|c|c|c|c|c|}
\hline
grid size &  $\|e_{\vect   u}\|$
& order & $\|e_p\|$ & order &$|e_{\vect   u}|_{1}$ &order\\
\hline
$128 \times  128$   &3.77e-3  & -        & 5.50e-5  & -        & 1.43e-4 &      -  \\
$256 \times  256$   &9.57e-4  &  1.99 & 1.39e-5  & 1.98  & 3.60e-5 & 1.99 \\
$512 \times  512$   &2.36e-4  &  2.02 & 3.36e-6  & 2.05  & 9.00e-6 & 2.00 \\
$1024\times 1024$ &5.91e-5  &  2.00 & 8.44e-7  & 1.99	& 2.25e-6 & 2.00\\
$2048\times 2048$ &1.48e-5  &  2.00 & 2.11e-7  & 2.00	& 5.63e-7 & 2.00 \\	
\hline
\end{tabular}
\end{center}
\label{tabcircle}
\end{table}

 \begin{table}[h]
\caption{Maximum errors and its convergence rates of {\em Example} 1.}
\begin{center}
\vspace{-0.1cm}
\begin{tabular}{|c|c|c|c|c|c|c|}
\hline
grid size  &$\|e_{\vect   u}\|_{\infty}$ 
 & order & $|e_{\vect   u}|_{1,\infty}$& order\\
\hline
$128 \times  128$   &1.34e-4  & -         & 1.42e-4 &      -  \\
$256 \times  256$   &3.38e-5  &  2.11  & 3.51e-5 & 2.11 \\
$512 \times  512$   &8.50e-6  &  1.99  & 8.73e-5 & 2.01 \\
$1024\times 1024$ &2.13e-6  &  2.00  & 2.17e-6 & 2.01\\
$2048\times 2048$ &5.34e-7  &  2.00  & 5.44e-7 & 1.99 \\	
\hline
\end{tabular}
\end{center}
\label{tabcirclemax}
\end{table}
The errors and convergence rates in the $\ell^2$-norms and  maximum norms 
are shown in Table \ref{tabcircle} and  Table \ref{tabcirclemax}, respectively, 
which indicate that the velocity and its gradient are of second-order accuracy 
in both the discrete  $\ell^2$-norm and the discrete maximum norm, 
 and the pressure is also second order accurate in the $\ell^2$-norm.  Fig. \ref{Ex:1} shows the 
 solution plots of the $x$-component of the velocity $u^{(1)}$, 
 the $y$-component of the velocity $u^{(2)}$ and the pressure $p$.  These numerical results verify the theoretical analysis.  

\begin{figure}[ht!]
\centering
\subfigure[the velocity field $u^{(1)}$]{
\includegraphics[width=0.31\textwidth]{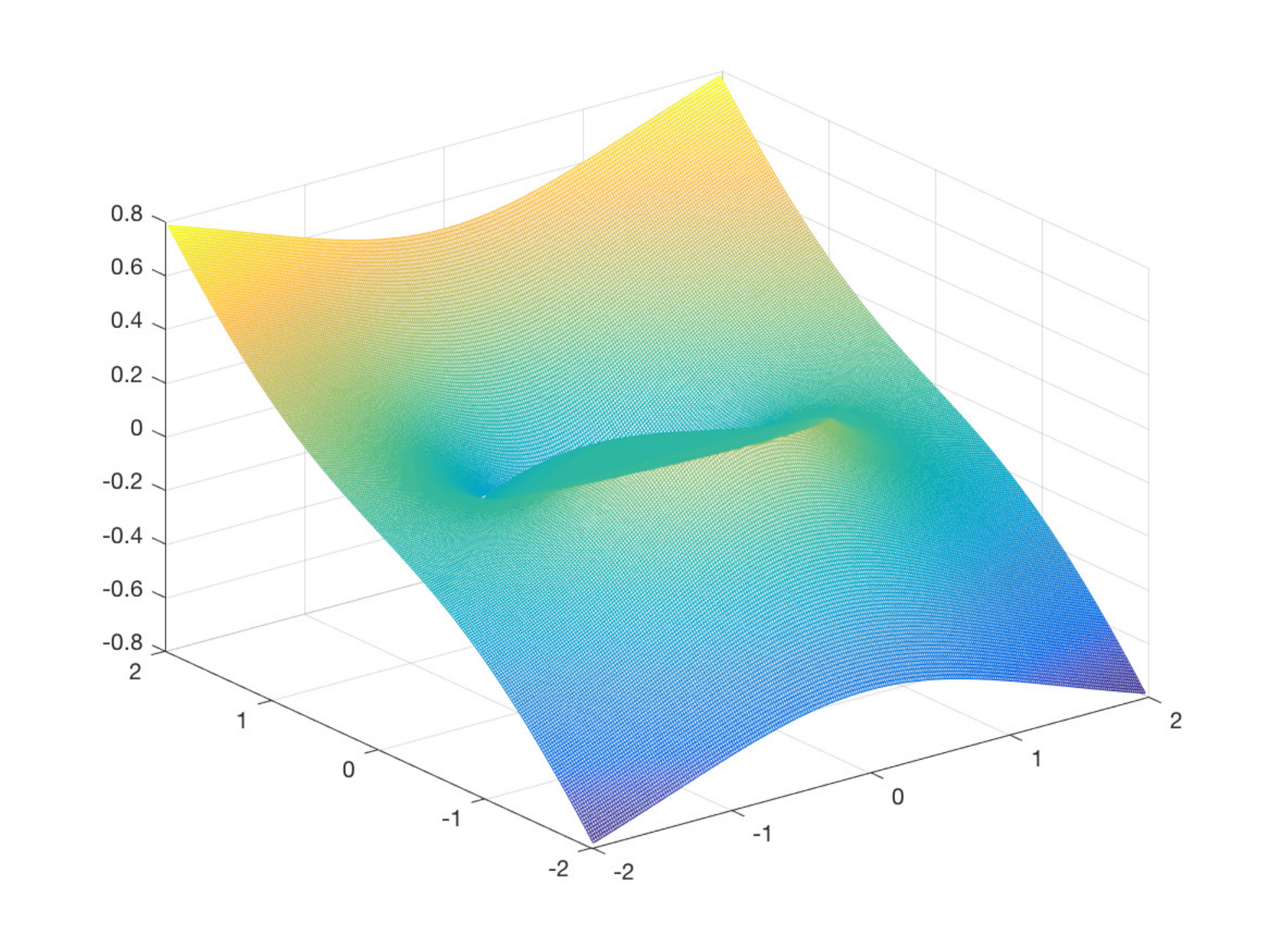}
}
\subfigure[the velocity field $u^{(2)}$]{
\includegraphics[width=0.31\textwidth]{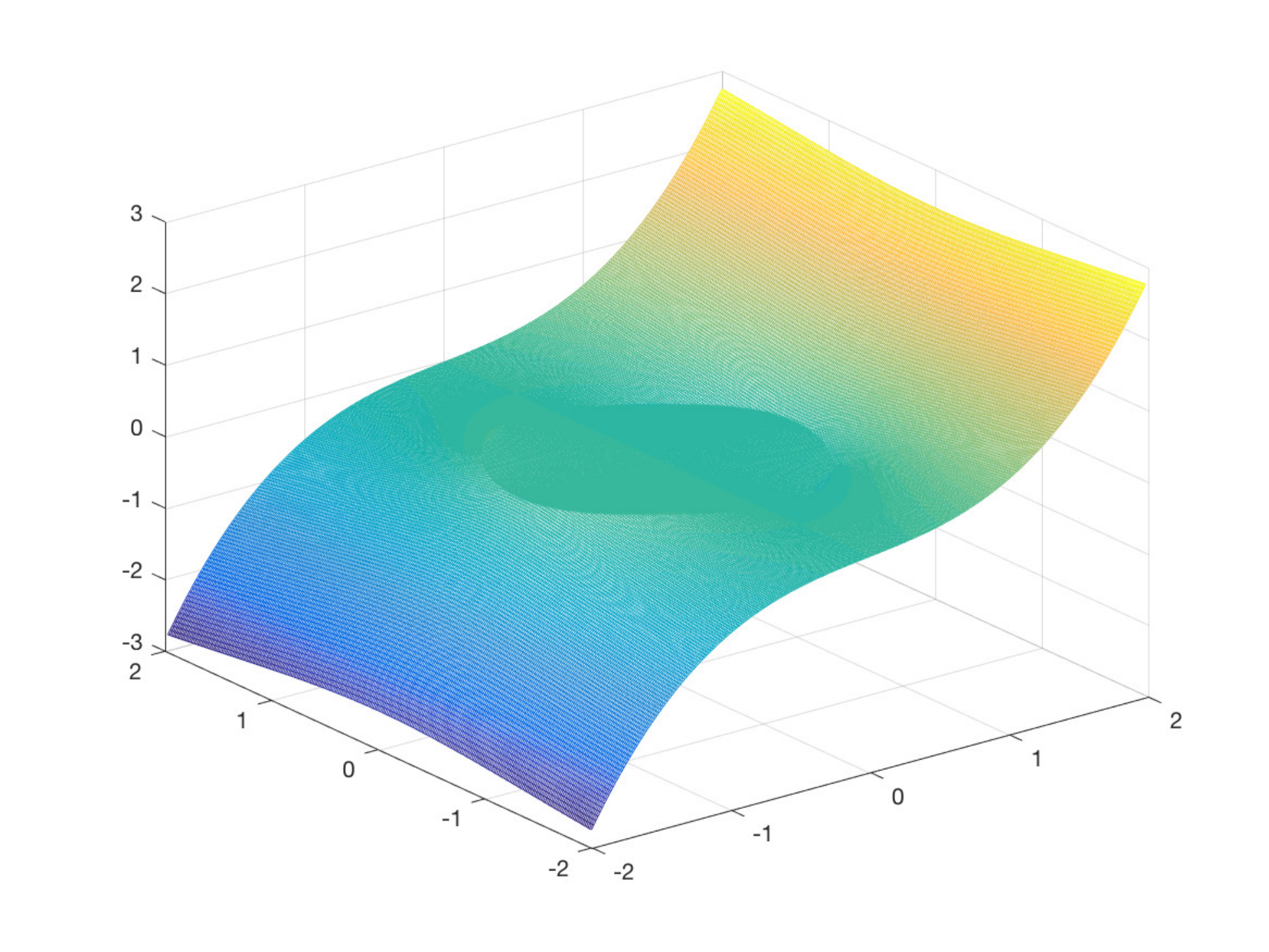}
}
\subfigure[the pressure field $p$]{
\includegraphics[width=0.31\textwidth]{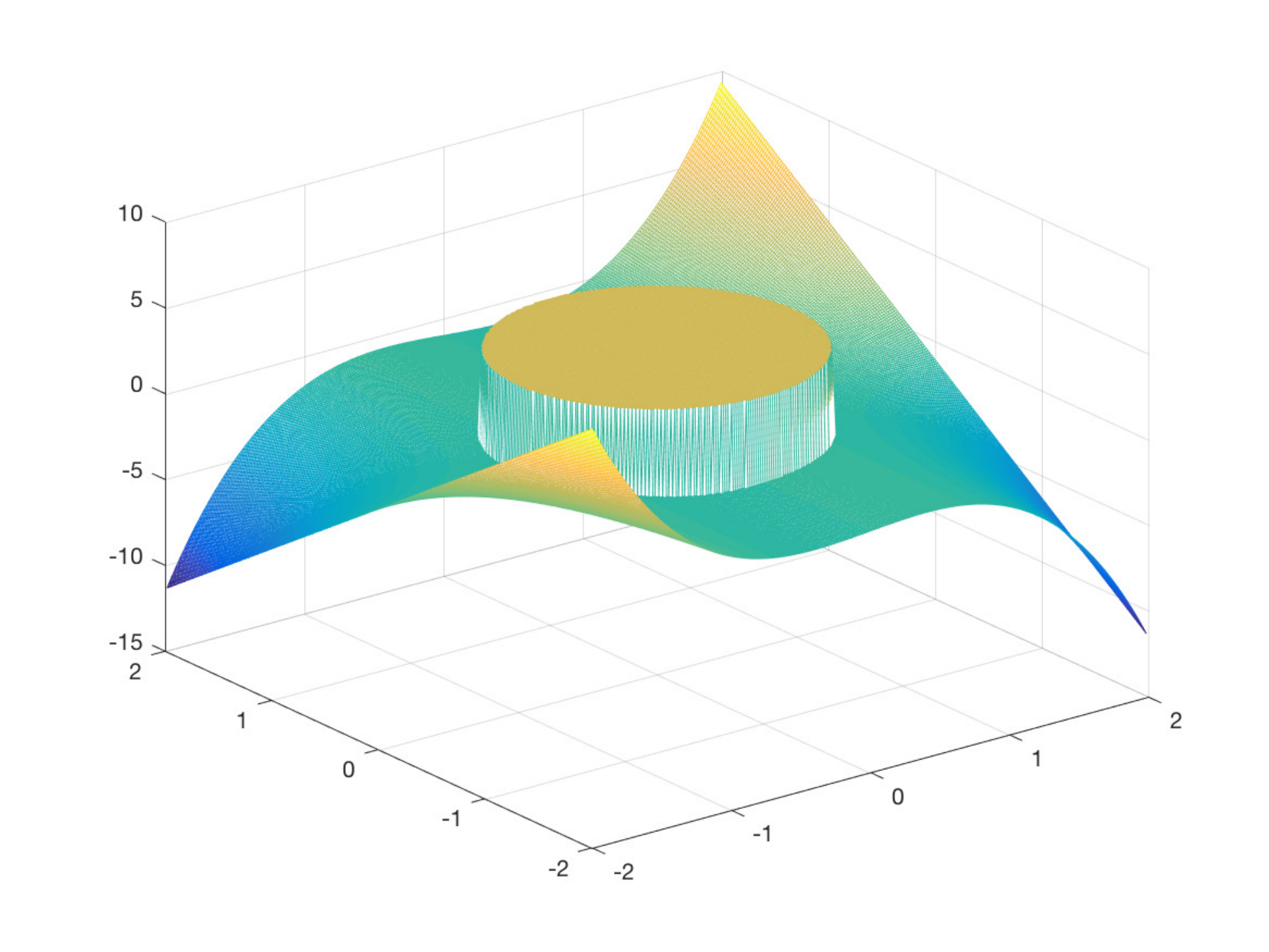}
}
\setlength{\abovecaptionskip}{-0.0cm}
\setlength{\belowcaptionskip}{-0.0cm}
 \caption{Numerical solutions for example 1 on a $128\times 128$ 
 grid.} 
 \label{Ex:1}
\end{figure}

{\em Example 2.} In this example, the interface is an ellipse which is  
governed by $x^2+4y^2 =1$  and the computational domain is
$\Omega=(-2, 2)^2$. The exact velocity and pressure are given by 
\begin{equation*}
\begin{split}
u^{(1)}(x, y)&=\begin{cases}
\dfrac{y}{4}(x^2+4y^2),\;x^2+4y^2\geq 1,\\[6pt]
\dfrac{y}{4}, \qquad\qquad\;\;\; x^2+4y^2<1,
\end{cases}\\
u^{(2)}(x, y)&=\begin{cases}
-\dfrac{xy^2}{4},\qquad\quad\,x^2+4y^2\geq 1,\\[4pt]
-\dfrac{x}{16}(1-x^2), \; x^2+4y^2<1,
\end{cases}\\
p(x, y)&=\begin{cases} 
0,\qquad\qquad\quad\;\;\;\, x^2+4y^2\geq 1,\\[4pt]
(-\dfrac{3}{4}x^3+\dfrac{3}{8}x)y, \; x^2+4y^2<1.
\end{cases}
\end{split}
\end{equation*}
\begin{table}[h]
\caption{$\ell^2$-errors and  its  convergence rates of {\em Example} 2. }
\begin{center}
\vspace{-0.1cm}
\begin{tabular}{|c|c|c|c|c|c|c|c|}
\hline
grid size & $\|e_{\vect   u}\|$
& order & $\|e_p\|$ & order &$\|e_{\vect   u}\|_1$ &order\\
\hline
$128 \times  128$   &3.53e-3  & -        & 2.41e-5  & -        & 2.49e-4 &      -  \\
$256 \times  256$   &8.87e-4  &  1.99 & 5.11e-6  & 2.24  & 6.14e-5 & 2.02 \\
$512 \times  512$   &2.20e-4  &  2.01 & 1.08e-6  & 2.24  & 1.53e-5 & 2.00\\
$1024\times 1024$ &5.53e-5  &  1.99 & 1.95e-7  & 2.47	& 3.82e-6 & 2.00\\
$2048\times 2048$ &1.39e-5  &  1.99 & 3.33e-8  & 2.55	& 9.55e-7 & 2.00 \\		
\hline
\end{tabular}
\end{center}
\label{tabellipse}
\end{table}

 \begin{table}[h]
\caption{Maximum errors and its convergence rates of {\em Example} 2.}
\begin{center}
\vspace{-0.1cm}
\begin{tabular}{|c|c|c|c|c|c|c|c|}
\hline
grid size &$\|e_{\vect   u}\|_{\infty}$ 
 & order & $|e_{\vect   u}|_{1,\infty}$& order\\
\hline
$128 \times  128$   & 2.44e-4  & -         & 5.18e-5 &      -  \\
$256 \times  256$   &6.09e-5  &  2.00  & 1.27e-5 & 2.02 \\
$512 \times  512$   &1.52e-5  &  2.00  & 3.17e-6 & 2.00 \\
$1024\times 1024$ &3.81e-6  &  2.00  & 7.92e-7 & 2.00\\
$2048\times 2048$ &9.53e-7  &  2.00  & 1.98e-7 & 2.00 \\	
\hline
\end{tabular}
\end{center}
\label{tabellipsemax}
\end{table}

The plots of the solutions are shown in Fig. \ref{Ex:2}.  The second order 
accurate solutions for the velocity, the pressure as well as  the gradient of
 the velocity in  discrete $\ell^2$-norms are displayed  in Table \ref{tabellipse}, 
 and the second order accurate solutions 
 in the maximum norms are displayed in Table \ref{tabellipsemax}.
\begin{figure}[ht!]
\centering
\subfigure[the velocity field $u^{(1)}$]{
\includegraphics[width=0.31\textwidth]{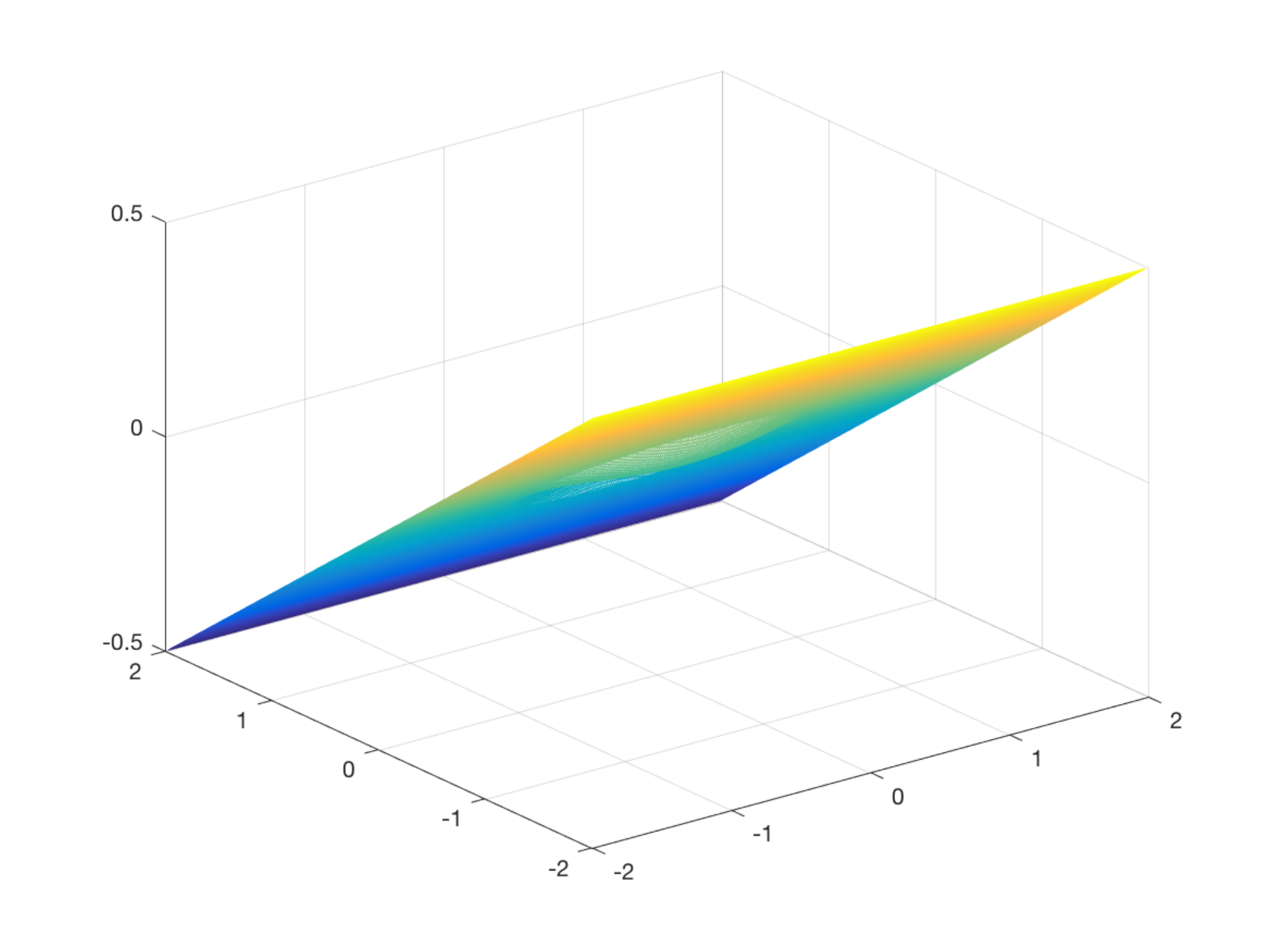}
}
\subfigure[the velocity field $u^{(2)}$]{
\includegraphics[width=0.31\textwidth]{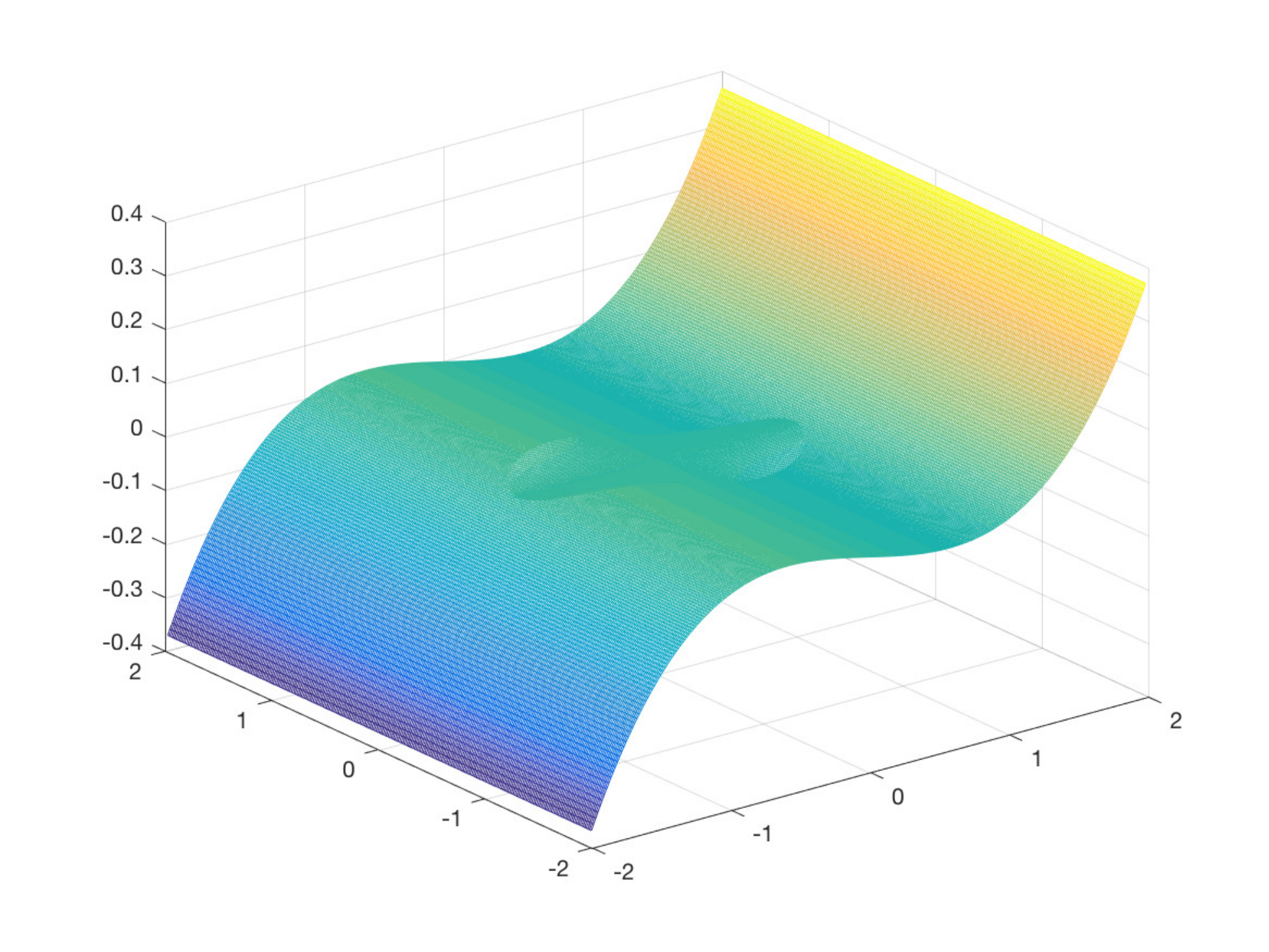}
}
\subfigure[the pressure field $p$]{
\includegraphics[width=0.31\textwidth]{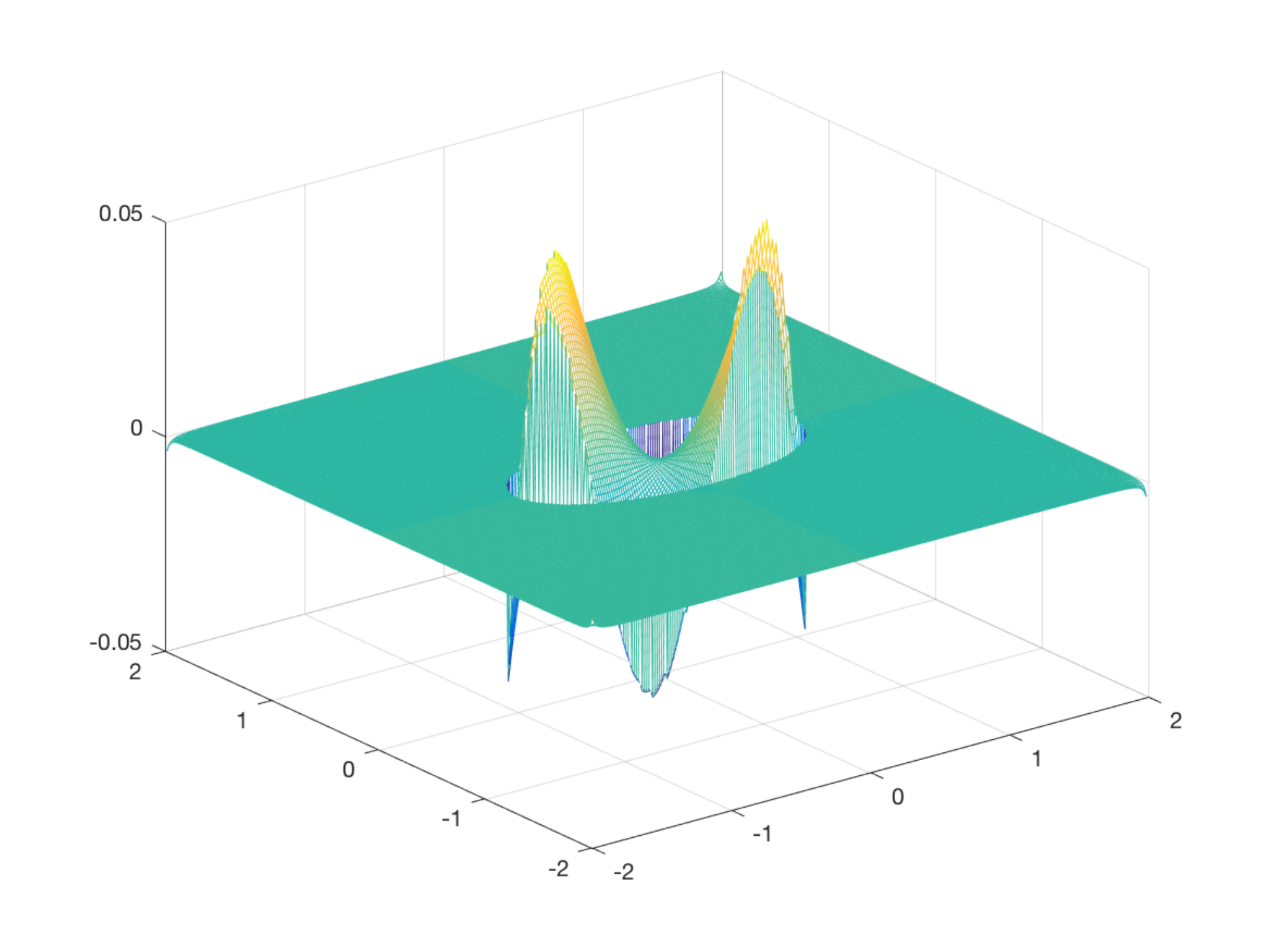}
}
 \setlength{\abovecaptionskip}{-0.0cm}
\setlength{\belowcaptionskip}{-0.0cm}
\caption{Numerical solution for example 2 on a $128\times 128$ 
 grid.} 
 \label{Ex:2}
\end{figure}

{\em Example 3.}  In order to illustrate the second accuracy of the Stokes solver for more complicated case, a three-dimensional problem is presented in the last example. The velocity and pressure are all discontinuous across the interface, 
which is a sphere with $r=1$ and located at the center of 
the box $\Omega=(-2, 2)^3$. 
The solution is given by 
\begin{equation*}
\begin{split}
u^{(1)}(x, y,z)&=\begin{cases} \exp(\cos y) + \exp(\sin z), &\; x^2+y^2+z^2>1,\\[6pt]
-4(1-x^2-y^2)xy-4x^2z^2+(x^2+3z^2-2)(z^2-x^2),&\;x^2+y^2+z^2\leq 1,
\end{cases}\\[4pt]
u^{(2)}(x, y,z)&=\begin{cases} \exp(\sin x), &\; x^2+y^2+z^2>1,\\[6pt]
-4x^2y^2+(3x^2+y^2-2)(x^2-y^2),&\;x^2+y^2+z^2\leq 1,
\end{cases}\\[4pt]
u^{(3)}(x, y,z)&=\begin{cases} \exp(\cos(x)), 
&\; x^2+y^2+z^2>1,\\[6pt]
-4(1-x^2-z^2)xz,&\;x^2+y^2+z^2\leq 1,
\end{cases}\\[4pt]
p(x, y,z)&=\begin{cases} \exp(\cos x+\sin y )+\exp(\cos z +\sin x),  &x^2+y^2+z^2>1,\\[4pt]
(x-1)^3+(y-1)^3+(z-1)^2,& x^2+y^2+z^2\leq 1.
\end{cases}
\end{split}
\end{equation*}
\begin{table}[h]
\caption{$\ell^2$-errors and  its  convergence rates of {\em Example} 3. }
\begin{center}
\vspace{-0.1cm}
\begin{tabular}{|c|c|c|c|c|c|c|}
\hline
grid size &  $\|e_{\vect   u}\|$
& order & $\|e_p\|$ & order &$|e_{\vect   u}|_{1}$ &order\\
\hline
$128 \times  128\times  128$   &1.79e-4  & -        & 1.82e-3 &  -        & 1.31e-4   &    -  \\
$256 \times  256\times  256$   &4.53e-5  & 1.98 & 5.32e-4  & 1.77  & 2.57e-5  &  2.35\\
$512 \times  512\times  512$   &1.12e-5  &  2.02 & 1.48e-4  & 1.85 & 4.97e-6 & 2.37 \\
\hline
\end{tabular}
\end{center}
\label{tabsphere}
\end{table}

 \begin{table}[h]
\caption{Maximum errors and its convergence rates of {\em Example} 3.}
\begin{center}
\vspace{-0.1cm}
\begin{tabular}{|c|c|c|c|c|c|c|}
\hline
grid size  &$\|e_{\vect   u}\|_{\infty}$ 
 & order & $|e_{\vect   u}|_{1,\infty}$& order\\
\hline
$128 \times  128\times  128$   &2.55e-4  &  -  & 1.97e-4 & -\\
$256 \times  256\times  256$   &6.30e-5  &  2.02  & 4.94e-5 & 2.00 \\
$512 \times  512\times  512$   &1.57e-5  &  2.00  & 1.24e-5 & 1.99 \\
\hline
\end{tabular}
\end{center}
\label{tabspheremax}
\end{table}
Tables \ref{tabsphere} and \ref{tabspheremax} show that the convergence 
rates  are of second-order in both discrete  $\ell^2$-norm and discrete 
maximum norm respectively, again confirming the theoretical analysis.  The numerical solution is shown in Fig. \ref{Ex:3}.

\begin{figure}[!ht]
\centering
\subfigure[the velocity field $u^{(1)}$]{
\includegraphics[width=0.35\textwidth]{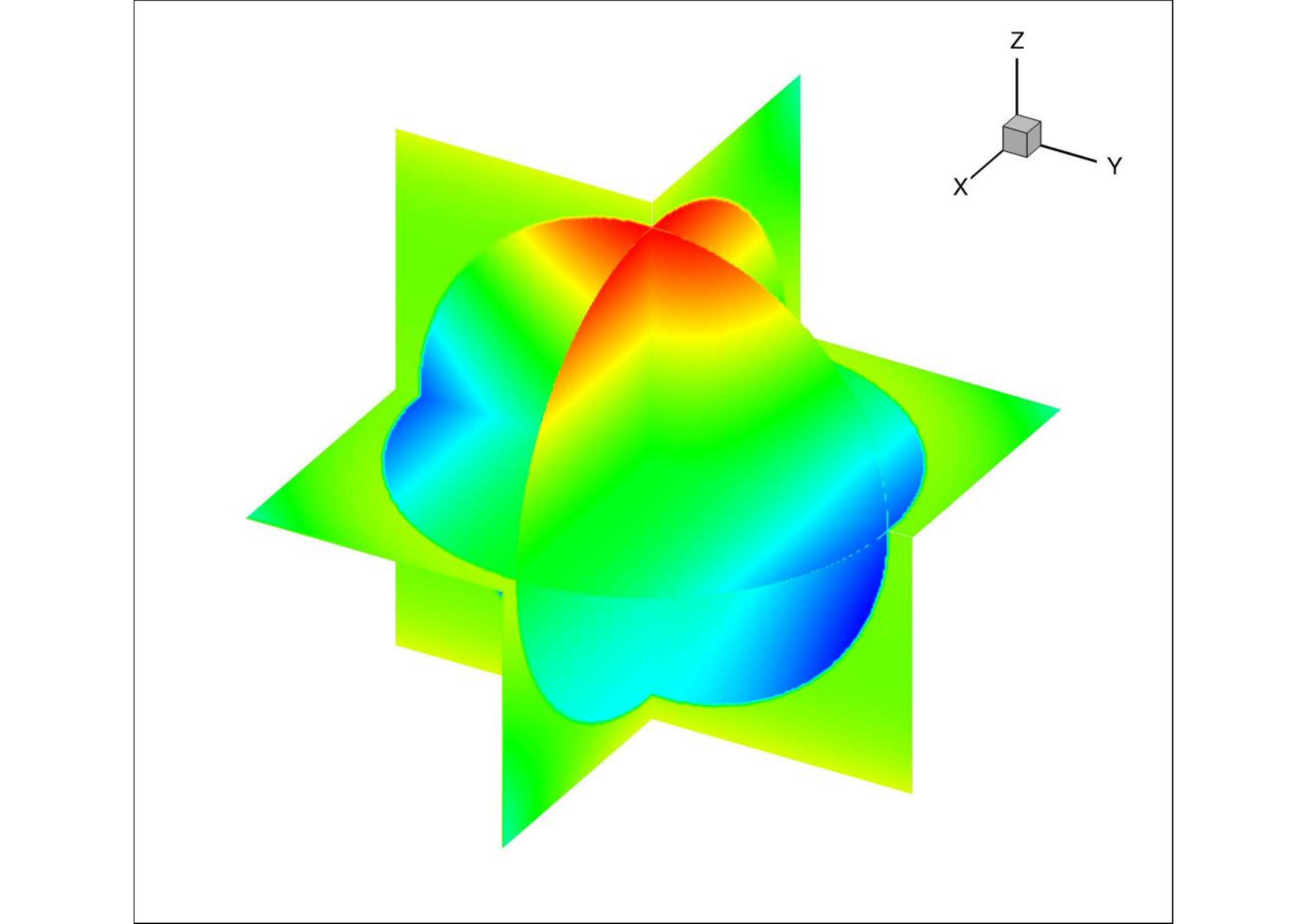}
}
\subfigure[the velocity field $u^{(2)}$]{
\includegraphics[width=0.35\textwidth]{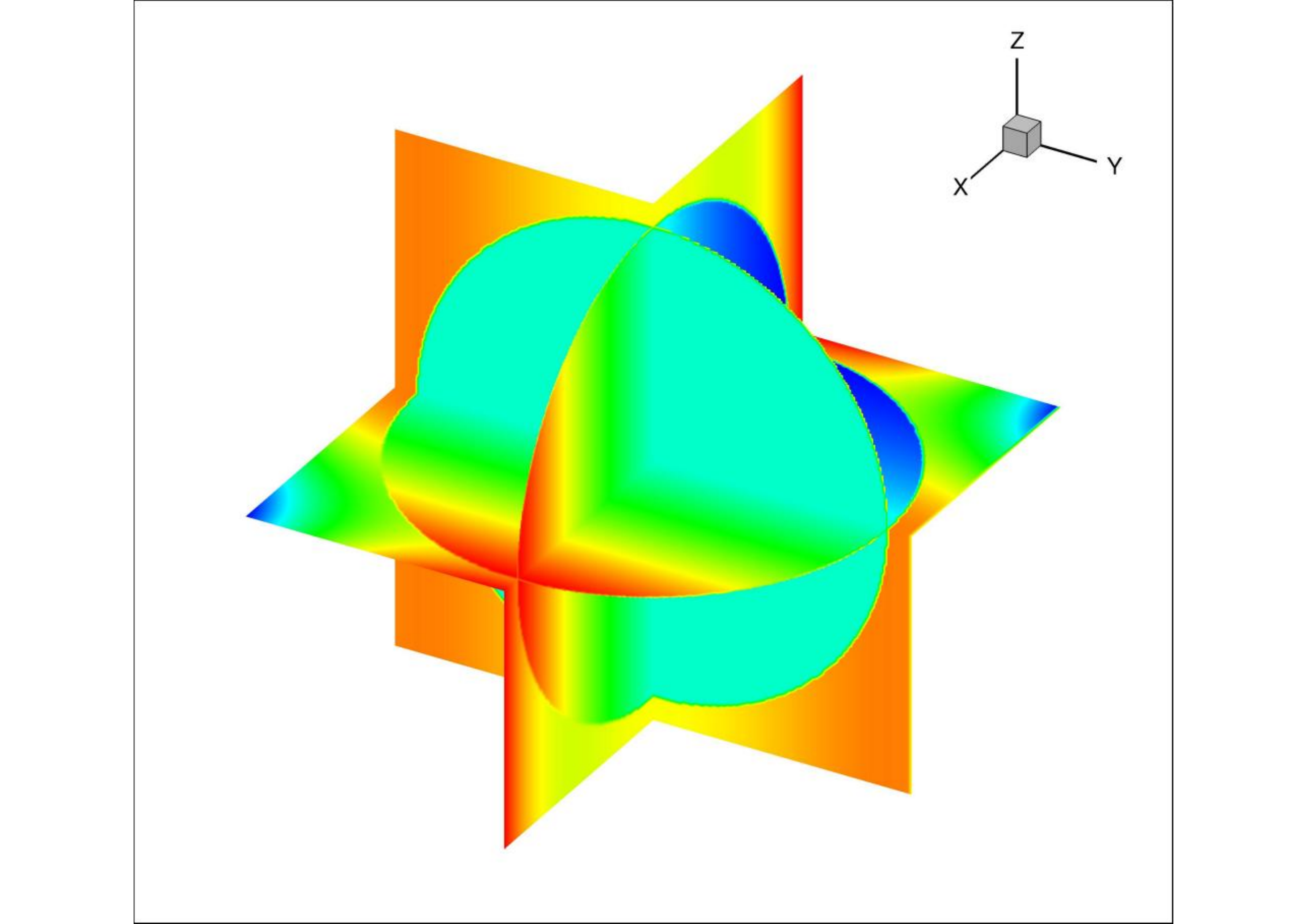}
}
\subfigure[the velocity  field $u^{(3)}$]{
\includegraphics[width=0.35\textwidth]{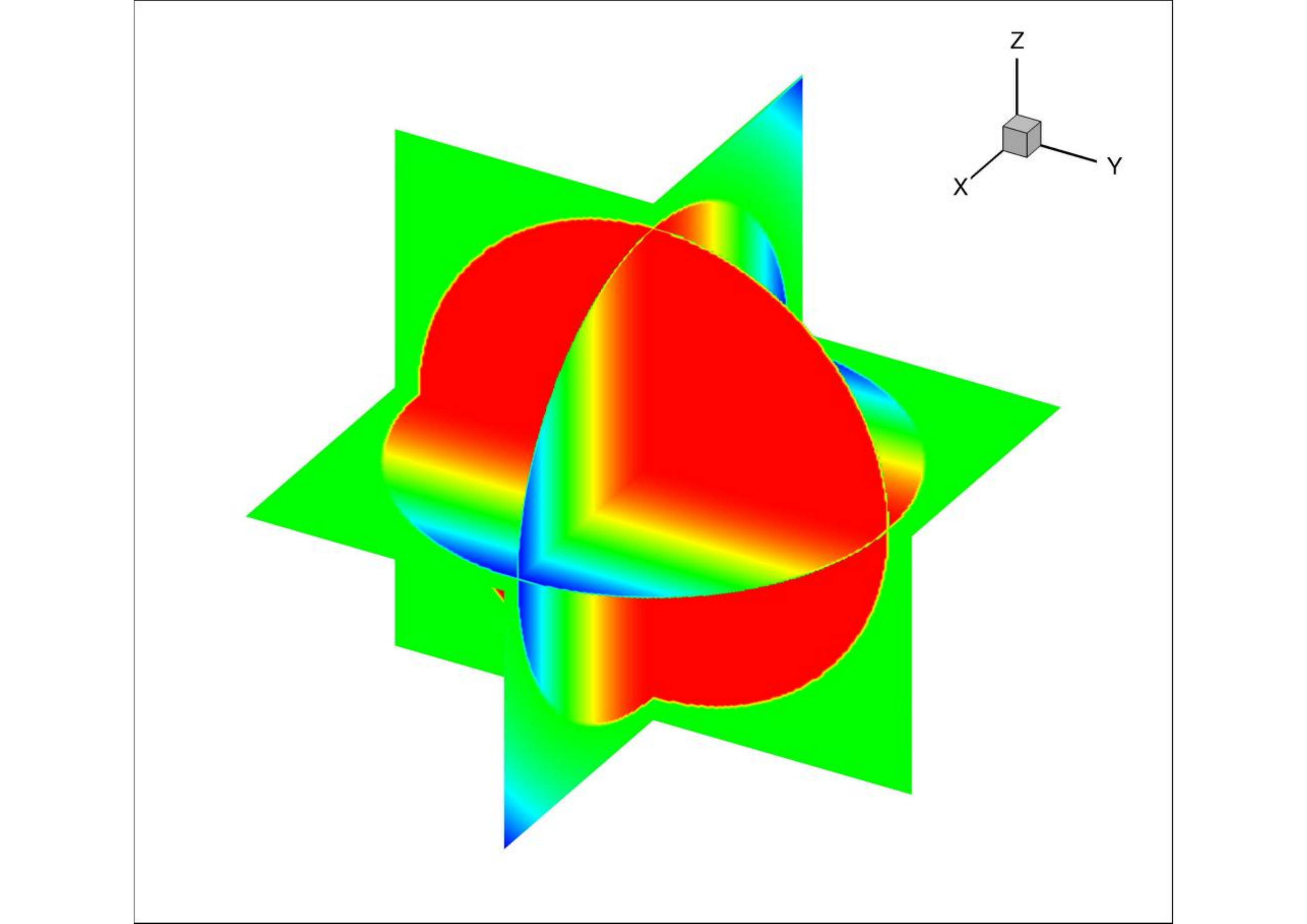}
}
\subfigure[the pressure field $p$]{
\includegraphics[width=0.35\textwidth]{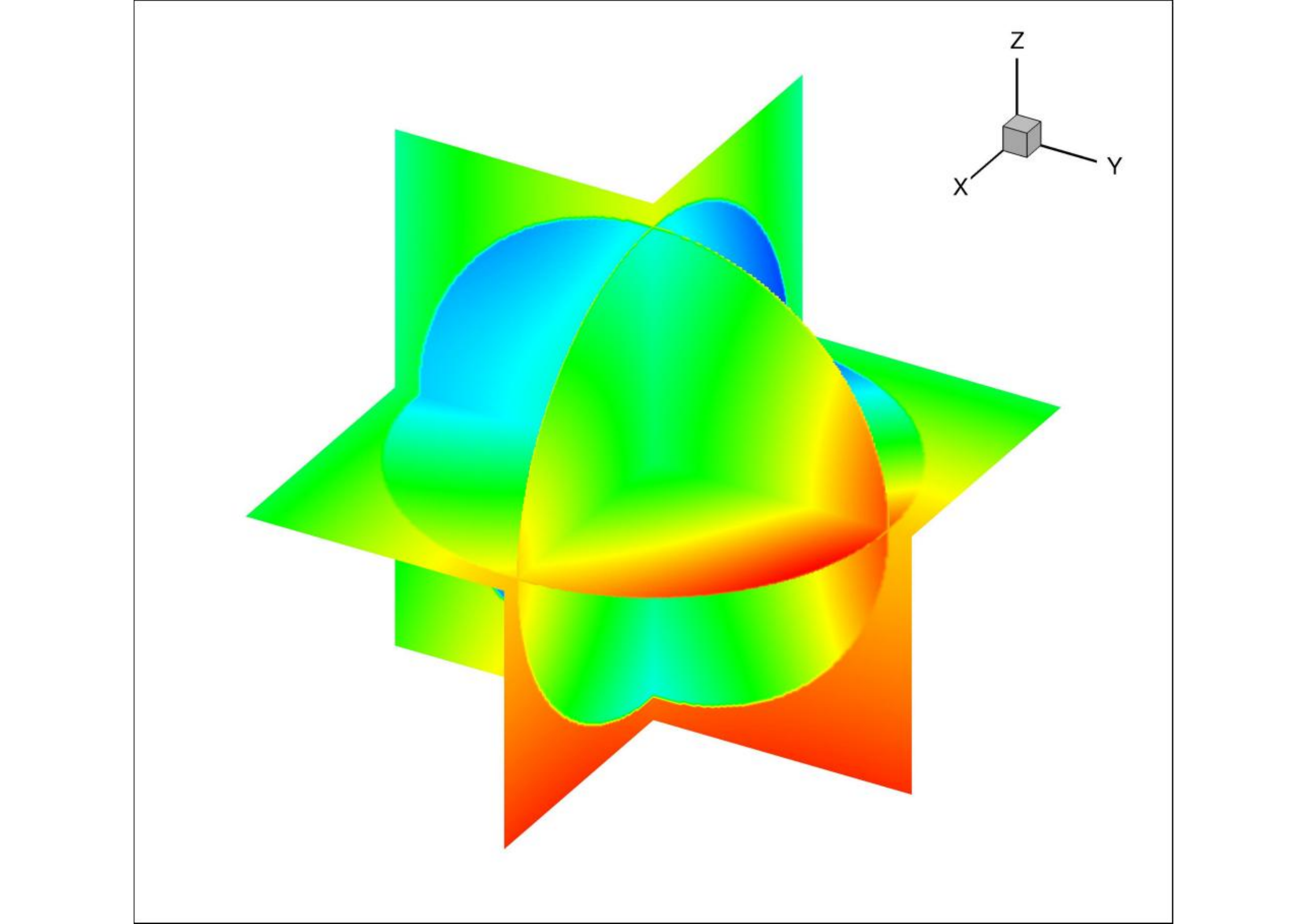}
}
 \setlength{\abovecaptionskip}{-0.0cm}
\setlength{\belowcaptionskip}{-0.0cm}
\caption{Numerical solution for example 3 on a $256\times 256$ grid.} 
 \label{Ex:3}
\end{figure}
\section{Conclusions}
In this work, the second order accuracy of an MAC scheme  for the incompressible Stokes interface problem 
with constant viscosity is proved. Some discrete auxiliary functions, which satisfy  
the discrete Stokes equations, the boundary conditions, the jump conditions 
to a high order of accuracy,  play  a key role in the proof. 
Using the discrete auxiliary functions, the difficulties arising from the 
boundary conditions and the interface are overcome.  
The theoretical results are verified by both 2D and 
3D numerical examples. The numerical experiments also demonstrate that the scheme has second order accuracy 
in the discrete maximum norm for velocity and its gradient, and its theoretical analysis can be obtained similarly as that in \cite{dong2020Maximum}.

\section*{Acknowledgement}
Haixia Dong is partially supported by NSFC under Grant NO. 12001193, the Scientific Research Fund of Hunan Provincial Education Department  (No.20B376), Changsha Municipal Natural Science Foundation (No. kq2014073).
 Wenjun Ying is  partially supported by the Strategic Priority Research Program of Chinese Academy of Sciences (Grant No. XDA25010405), the National Natural Science Foundation of China (Grant No. DMS-11771290) and the Science Challenge Project of China (Grant No. TZ2016002).
Jiwei Zhang is partially supported by NSFC under grant No. 12171376, 2020-JCJQ- ZD-029 and NSAF U1930402.




\end{document}